\documentclass[10pt]{amsart}

\usepackage[T1]{fontenc}
\usepackage[utf8]{inputenc}

\usepackage[left=2.7cm,right=2.7cm,top=3.2cm,bottom=3.2cm]{geometry}

\usepackage{mathtools}      
\usepackage{amsthm}
\usepackage{amssymb}        
\usepackage{mathrsfs}
\usepackage{dsfont}
\usepackage{extarrows}
\usepackage{empheq}
\usepackage{cases}

\usepackage{xcolor}
\usepackage{enumitem}
\usepackage{comment}

\usepackage{natbib}

\usepackage{graphicx}
\graphicspath{{fig/}}

\usepackage{algorithm}
\usepackage{algpseudocode}

\usepackage[
  colorlinks=true,
  urlcolor=blue,
  citecolor=blue,
  linkcolor=blue,
  linktocpage,
  pdfpagelabels,
  bookmarksnumbered,
  bookmarksopen
]{hyperref}

\numberwithin{equation}{section}
\mathtoolsset{showonlyrefs}
\theoremstyle{plain}
\newtheorem{theorem}{Theorem}[section]
\newtheorem{lemma}[theorem]{Lemma}
\newtheorem{corollary}[theorem]{Corollary}

\theoremstyle{definition}
\newtheorem{definition}[theorem]{Definition}

\theoremstyle{remark}
\newtheorem{remark}[theorem]{Remark}

\newtheorem{theoremletter}{Theorem}

\newcommand{\papertitle}
{A Brezis--Merle inequality for the biharmonic operator with Dirichlet boundary conditions in four-dimensional balls}

\begin{document}


\begin{center}

{\Large\bfseries \papertitle\par}

\vspace{0.8cm}

{\scshape Jo\~ao Marcos do \'O}\\
{\small Department of Mathematics, Federal University of Para\'{\i}ba, Brazil}\\
{\small \texttt{jmbo@mat.ufpb.br}}

\vspace{0.4cm}

{\scshape Federica Sani}\\
{\small Dipartimento di Scienze Fisiche, Informatiche e Matematiche,}\\
{\small Universit\`a di Modena e Reggio Emilia, Via Campi 213/A, 41125 Modena, Italy}\\
{\small \texttt{federica.sani@unimore.it}}

\vspace{0.4cm}

{\scshape Victor Rafael Silva}\\
{\small Department of Mathematics, Federal University of Para\'{\i}ba, Brazil}\\
{\small \texttt{victor.rafael@academico.ufpb.br}}

\vspace{0.5cm}

{\small \today}

\end{center}

\vspace{0.5cm}


\begin{abstract}
In this paper, we establish a Brezis-Merle type inequality for
very weak solutions of the biharmonic Dirichlet problem in the
four-dimensional unit ball. By combining this inequality with moving-plane
arguments, we obtain uniform a priori estimates for positive solutions
of semilinear biharmonic equations with exponential nonlinearities in
both the subcritical and critical regimes. As an application of these
estimates, we  prove the existence of a
positive solution by means of topological methods. We also extend the analysis to a class of
non-variational cooperative systems.
\end{abstract}

\makeatletter
\@setabstract
\makeatother

\vspace{0.3cm}

\noindent\textit{Keywords:}
Dirichlet biharmonic problems, Positive solutions, A priori estimates, Brezis--Merle type inequalities.

\medskip

\noindent\textit{2020 Mathematics Subject Classification:} 35J40, 35B45, 35J91, 35J58.


    \tableofcontents

	\section{Introduction}

The goal of this work is to prove uniform estimates for {\it very weak solutions} of a class of fourth-order equations and systems with Dirichlet boundary conditions in the unit ball  $B\coloneq B(0,1) \subseteq \mathds{R}^4$ involving nonlinearities with exponential growth. Our approach is based
on a new Brezis--Merle type inequality for biharmonic problems. Although our analysis also encompasses systems, for clarity of exposition we will mainly present our results and proofs in the simpler setting of a scalar equation, and subsequently show how the theory that we will develop can be adapted to cover the case of systems. More precisely, we start by considering problems of the form
\begin{equation}\label{App}\tag{$P$}
\left\{
\begin{aligned}
\Delta^{2}u \;&=\; f(x,u) && \text{in } B,\\
u\;&=\; 0 && \text{on } \partial B,\\
\partial_{\nu}u \;&=\; 0 && \text{on } \partial B,\\
u \;&>\; 0 && \text{in } B,
\end{aligned}
\right.
\end{equation}
where $\Delta^{2}$ is the biharmonic operator, the function $f: \overline{B} \times \mathds{R}^{+} \to \mathds{R}^{+}$ is continuous and satisfies suitable growth conditions, and $\nu$ denotes the unit outer normal vector to the boundary of the ball.

Here, solutions of \eqref{App} are understood in the very weak sense, that is, $ u \in L^1(B),$ $f(x,u) \in L^1(B)$ and
\begin{equation}\label{very weak solution}
    \int_{B} u \Delta^2\varphi\,\mathrm{d}x=\int_{B}f(x,u)\varphi\,\mathrm{d}x,\quad \forall \varphi \in C_{\mathrm{cl}}^{4}(\overline{B}),
\end{equation}
where
\begin{equation}\label{Dirichlet test space}
    C_{\mathrm{cl}}^{4}(\overline{B})\coloneq \left\{\varphi \in C^{4}(\overline{B}): \varphi=\partial_{\nu}\varphi=0\quad \text{on}\quad \partial B\right\}.
\end{equation}
The space of test functions  defined in \eqref{Dirichlet test space} is the natural choice to encode the boundary conditions of the Dirichlet biharmonic problem \eqref{App}. Indeed, suppose that
\(u\) is smooth and satisfies
\[
\Delta^{2}u=f \quad \text{in } B,
\qquad
u=\partial_{\nu}u=0 \quad \text{on } \partial B.
\]
Then integration by parts gives
\[
\int_B u\,\Delta^2\varphi\,\mathrm{d}x
=
\int_B f\,\varphi\,\mathrm{d}x \quad \forall\,\varphi \in C_{\mathrm{cl}}^{4
}(\overline{B}),
\]
since the boundary
terms vanish precisely because both \(u\) and \(\varphi\) satisfy the same
Dirichlet boundary conditions.
\medskip

The work of Brezis and Merle \cite{BM} established that very weak solutions of Poisson equations with $L^1$ data in two-dimensional smooth bounded domains satisfy an exponential integrability estimate. This result became an important tool in the study of Liouville-type equations of the form $-\Delta u = V(x)e^u$ and the related analysis of compactness and blow-up phenomena; see, for instance, \cite{LiShafrir1994}. Several works have extended ideas from the Brezis-Merle theory to different settings. In the quasilinear case, problems involving the $N$-Laplacian in dimension $N$ with exponential right-hand side have been studied from this point of view; see, for example, \cite{AguilarCrespoPeral1997,UbillaRufLorca}. For higher-order equations, however, additional difficulties arise, since comparison principles and positivity arguments are no longer available in the same form as in the second-order theory.

For biharmonic equations of the form $\Delta^2 u =f(x,u)$ in $\Omega \subset \mathds R^4$, the boundary conditions have a strong influence on the analysis of the problem. Under Navier boundary conditions,
\begin{equation*}
u=\Delta u=0 \quad \text{on } \partial\Omega,
\end{equation*}
the fourth-order equation can be written as a system of two second-order Dirichlet equations. This makes it possible to use positivity and comparison arguments coming from second-order theory. In this direction, Lin \cite{Lin1998} proved a Brezis--Merle type estimate for the Navier biharmonic problem in dimension four. Related extensions for polyharmonic equations, in conformal dimensions, under Navier boundary conditions were later obtained by Martinazzi in \cite{MartinazziBMNavier}. The Dirichlet boundary conditions
\begin{equation*}
u=\partial_\nu u=0 \quad \text{on } \partial\Omega
\end{equation*}
lead to a different problem. In this case, there is in general no reduction to a second-order system, and the comparison argument used in the original second-order theory cannot be applied directly. Thus, the Dirichlet biharmonic problem raises a natural Brezis-Merle type question: \emph{can one still obtain exponential integrability estimates for the solution itself with only $L^1$ data?} In dimension four, the logarithmic singularity of the biharmonic Green kernel indicates the natural exponential scale for such an estimate.

This question is related to a broader line of works on higher-order elliptic problems in conformal dimensions, where Green function estimates, blow-up analysis, and moving-plane type arguments play a central role. Several recent works develop general frameworks for polyharmonic problems in this direction, treating wide classes of nonlinearities and domains; see, e.g., the work by Mancini and Romani \cite{Mancini} and the references therein. Our contribution has a different emphasis. In the unit ball $B\subset\mathds{R}^4$, we focus on a linear Brezis-Merle type estimate for the \emph{Dirichlet} biharmonic operator with $L^1$ data. Using Boggio's explicit Green function, we obtain such an estimate with sharp exponenti $32\pi^2$. This inequality is then used to prove uniform estimates for solutions of the Dirichlet biharmonic problem \eqref{App} at the level of very weak solutions.

\subsection{Brezis-Merle type inequality}

The main tool in our argument to prove uniform estimates for any very weak solution of \eqref{App} is the following inequality of Brezis-Merle type for solutions of the \emph{linear} problem

\begin{equation}\label{P}
     \tag{$LP$}
\left\{\begin{array}{rcl}
\Delta^2 u  =  f(x) \ \ &\text{in} \ \ B,\\
u  =  \partial_{\nu}u = 0\ \ &\text{on} \ \ \partial B.\\
\end{array}\right.
\end{equation}

\begin{theorem} \label{B-M} If $f \in L^1(B)$ and $u$ is a  very weak solution of \eqref{P}, then for every $\delta \in (0,32\pi^2)$ we have
$$\int_{B} \exp\left[\frac{(32\pi^2-\delta)\lvert u(x) \rvert}{\lVert f \rVert_{L^1(B)}}\right]\,\mathrm{d}x \leq  \frac{256}{\delta}\pi^4.$$
\end{theorem}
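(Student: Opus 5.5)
Write $u(x)=\int_B G(x,y)f(y)\,\mathrm{d}y$, where $G$ is Boggio's Green function for $\Delta^2$ on $B\subset\mathds{R}^4$ with Dirichlet conditions:
\[
G(x,y)=\frac{1}{8\pi^2}\left[\log\frac{[XY]}{|x-y|}-\frac12\left(1-\frac{|x-y|^2}{[XY]^2}\right)\right],\qquad [XY]^2=|x-y|^2+(1-|x|^2)(1-|y|^2).
\]
The first step is to check that every very weak solution in the sense of \eqref{very weak solution} coincides a.e.\ with this representation.

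\textbf{Step 1: uniqueness.} If $u\in L^1$ satisfies $\int_B u\,\Delta^2\varphi=0$ for all $\varphi\in C^4_{\mathrm{cl}}(\overline B)$, then $u=0$. For $\psi\in C_c^\infty(B)$, let $\varphi$ solve $\Delta^2\varphi=\psi$ with Dirichlet conditions. On the ball $\varphi$ is smooth up to the boundary, so $\varphi\in C^4_{\mathrm{cl}}(\overline B)$. This gives $\int_B u\psi=0$ for every such $\psi$.

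\textbf{Step 1: existence.} The function $\tilde u=\int_B G(\cdot,y)f(y)\,\mathrm{d}y$ lies in $L^1$ by Fubini, since $G(\cdot,y)$ is bounded in $L^1$ uniformly in $y$. It is a very weak solution because
\[
\int_B G(x,y)\Delta^2\varphi(x)\,\mathrm{d}x=\varphi(y)
\]
for $\varphi\in C^4_{\mathrm{cl}}(\overline B)$, which is the representation formula for smooth functions. Hence $u=\tilde u$ a.e.

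\textbf{Step 2: pointwise bound on $G$.} Boggio's formula gives $G\ge 0$. Using $\log s\le s-1$ with $s=[XY]^2/|x-y|^2$, or more simply by bounding the bracket, I would show
\[
0\le G(x,y)\le \frac{1}{8\pi^2}\log\frac{[XY]}{|x-y|}\le \frac{1}{8\pi^2}\log\frac{2}{|x-y|}.
\]
This uses $[XY]\le 2$ for $x,y\in B$, which follows from $[XY]^2=(1-|x|^2|y|^2)+(\ldots)$; in fact $[XY]^2 = |x|^2|y|^2-2x\cdot y+1\le (1+|x||y|)^2\le 4$. Therefore $|u(x)|\le \frac{1}{8\pi^2}\int_B \log\frac{2}{|x-y|}\,|f(y)|\,\mathrm{d}y$. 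I want a constant $4$ rather than $2$ in the kernel scale, so I will keep whichever bound yields the stated constant. The factor $256\pi^4/\delta$ suggests the kernel $\log\frac{2}{|x-y|}$ together with $|B|=\pi^2/2$.

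\textbf{Step 3: Jensen.} Assume $f\not\equiv0$ and normalise $\|f\|_{L^1}=1$ by homogeneity. Set $\mu=|f|\,\mathrm{d}y$, a probability measure, and $\beta=32\pi^2-\delta$. Jensen's inequality gives
\[
\exp(\beta|u(x)|)\le \int_B \left(\frac{2}{|x-y|}\right)^{\beta/(8\pi^2)}\mathrm{d}\mu(y).
\]
The exponent is $\gamma=4-\delta/(8\pi^2)<4$. Integrating in $x$ and applying Fubini,
\[
\int_B\exp(\beta|u|)\le \sup_{y\in B}\int_B\left(\frac{2}{|x-y|}\right)^{\gamma}\mathrm{d}x\le 2^\gamma\int_{B(0,2)}|z|^{-\gamma}\,\mathrm{d}z=2^\gamma\cdot 2\pi^2\,\frac{2^{4-\gamma}}{4-\gamma}=\frac{32\pi^2\cdot 8\pi^2}{\delta}=\frac{256\pi^4}{\delta}.
\]
Here $B(x,\cdot)\subset B(y,2)$, the integral $\int_{B(0,2)}|z|^{-\gamma}\,\mathrm{d}z$ has been evaluated, and $|\mathbb S^3|=2\pi^2$. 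This matches the stated constant exactly, which confirms that the choice $\log(2/|x-y|)$ in Step 2 is the right one.

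\textbf{Main obstacle.} The delicate point is Step 2. One must verify carefully that the correction term $-\frac12\bigl(1-|x-y|^2/[XY]^2\bigr)$ is nonpositive, which holds since $[XY]\ge|x-y|$, and that $[XY]\le 2$. Step 1 also needs care in justifying the representation formula for test functions in $C^4_{\mathrm{cl}}$. That follows by applying Green's formula to the smooth function $\varphi$ against $G(\cdot,y)$, using the boundary vanishing of both $G$ and $\varphi$ together with their normal derivatives.
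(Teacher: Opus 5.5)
Your proof is correct and follows the paper's argument: uniqueness of very weak solutions, the Green representation $u(x)=\int_B G(x,y)f(y)\,\mathrm{d}y$, the bound $0\le G(x,y)\le \frac{1}{8\pi^2}\log\frac{2}{|x-y|}$, Jensen's inequality with the probability measure $|f|\,\mathrm{d}y/\|f\|_{L^1(B)}$, and Tonelli. The only difference is in estimating $\int_B|x-y|^{-\gamma}\,\mathrm{d}x$: you use the inclusion $B\subset B(y,2)$, which gives exactly $256\pi^4/\delta$, whereas the paper uses the Hardy--Littlewood rearrangement inequality to get the slightly sharper $2^{\gamma}\cdot 16\pi^4/\delta$ and then concludes via $2^{\gamma}<16$.
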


The proof of \hyperref[B-M]{Theorem \ref{B-M}} is based on a strategy relying on estimates for the Green function of the biharmonic operator under Dirichlet boundary conditions. This choice is naturally motivated by the higher-order character of the operator, for which comparison principles are in general not available. By contrast, in the second-order case, the original argument of Brezis and Merle \cite{BM} makes essential use of a comparison between a very weak solution satisfying Dirichlet boundary conditions and another very weak solution of the same problem but without boundary conditions; see also \cite[Propositions 5.1 and 5.2]{Ponce}. Such a principle is usually proved by exploiting the positive part of a Sobolev function in \(W^{1,1}\), a mechanism that does not appear to extend in any direct way to higher-order Sobolev spaces. This obstruction prevents a straightforward adaptation of the Brezis-Merle method to our framework and leads us instead to a Green function based approach. Our proof relies on the Green representation formula for solutions of \eqref{P} (see \cite{Boggioclassical,Gazzola}) and Jensen's inequality, avoiding any direct use of a test function transfer principle as in  \cite[Proposition 5.2]{Ponce}.

\subsection{Uniform a priori bounds and existence in the scalar case}
\label{subsec:MainResults}

As a representative application of our main result, we mention classical nonlinearities of the form $f(x,t)=V(x) e^t$. The following theorem states a special case of our more general result (see \hyperref[a priori estimate1 final]{Theorem \ref{a priori estimate1 final}} further below).

\begin{theorem} \label{thm:ModelCase}
Assume that $f(x,t)=V(x)\,e^{t}$, with $V$ satisfying
\begin{equation}\label{V0}
\exists C_1,C_2>0 \text{ such that } C_1\leq V(x)\leq C_2 \quad \forall x \in B.
\end{equation} 
Suppose that there exist $\overline r$, $\overline\delta>0$ such that
\begin{enumerate}[label=(\roman*), leftmargin=*]
\item $V\in C^{1}(\omega_{\overline r})$,
\item for every $x\in \omega_{\overline r}$ and $\theta\in \mathds{S}^{3}$ with $\Bigl|\theta-\frac{x}{|x|}\Bigr|\le \overline\delta,$
we have $\langle \nabla V(x),\theta\rangle \le 0,$ 
\end{enumerate}
where $\omega_{\overline r}\coloneqq \bigl\{x\in B:\mathrm{dist}(x,\partial B)<\overline r\bigr\}$.
Then there exists a constant $C>0$ such that every very weak solution $u$ of \eqref{App} satisfies $\|u\|_{L^{\infty}(B)}\le C$.
\end{theorem}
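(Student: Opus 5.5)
We follow the classical Brezis--Merle/de Figueiredo--Lions--Nussbaum scheme, now using \hyperref[B-M]{Theorem \ref{B-M}} in place of the second-order inequality. There are three steps: (a) a uniform $L^1$ bound for $f(x,u)=V e^u$; (b) a uniform $L^\infty$ bound near $\partial B$, obtained by moving planes; (c) a bootstrap in the interior.

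\emph{Step 1 (uniform $L^1$ bound).} Let $\varphi_1>0$ be the first eigenfunction of $\Delta^2$ in $B$ with Dirichlet conditions and eigenvalue $\lambda_1$. By Boggio's positivity of the Green function in the ball, $\varphi_1$ is positive, radial and lies in $C^{4}_{\mathrm{cl}}(\overline B)$, so it is an admissible test function in \eqref{very weak solution}. Testing gives
\[
\lambda_1\int_B u\varphi_1=\int_B Ve^u\varphi_1 .
\]
The lower bound in \eqref{V0} together with $e^t\ge 2\lambda_1 t/C_1 - C$ then yields $\int_B u\varphi_1\le C$ and $\int_B Ve^u\varphi_1\le C$. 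This controls $f(x,u)$ only in $L^1(\varphi_1\,dx)$, a weighted norm that degenerates at $\partial B$. A full $L^1(B)$ bound requires control near the boundary, which Step 2 supplies.

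\emph{Step 2 (moving planes near the boundary).} Using hypotheses (i)--(ii), we run the moving plane method in directions $\theta$ with $|\theta-x/|x||\le\overline\delta$. In the ball, Boggio's formula gives the comparison principle needed for reflected Dirichlet biharmonic problems; a very weak solution is first seen to be continuous via Green's representation and local regularity. The outcome is that for every $x\in\omega_{r}$, with $r<\overline r$ small, $u(x)\le u(y)$ for all $y$ in a cone region $I_x\subset B\setminus\omega_{r/2}$ of measure bounded below independently of $u$. Integrating $u(x)\le u(y)$ over $y\in I_x$ gives
\[
u(x)\le |I_x|^{-1}\int_{I_x}u\le C\int_B u\varphi_1\le C ,
\]
since $\varphi_1\ge c>0$ on $B\setminus\omega_{r/2}$. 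Hence $u$ is uniformly bounded on $\omega_r$, and therefore $\int_{\omega_r}Ve^u\le C$. Combined with Step 1 this gives $\|f(\cdot,u)\|_{L^1(B)}\le C$.

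I expect Step 2 to be the main obstacle. The moving plane method for $\Delta^2$ under Dirichlet conditions lacks a maximum principle for $u-u_\lambda$, and one must rely on the positivity of Boggio's Green function and the symmetry of the ball (the Troy/Ferrero--Gazzola--Weth approach for balls), applied at the level of very weak solutions. A point-wise positivity argument on the Green function difference $G(x,y)-G(x^\lambda,y)$ in reflected caps is what I would try first.

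\emph{Step 3 (interior bound).} Theorem \ref{B-M} alone only gives $e^{u}\in L^p$ for $p<32\pi^2/\|f\|_{L^1}$, which need not exceed $1$ when the $L^1$ mass is large, so we use the standard splitting trick. We already have $u\le C$ on $\omega_r$, so it remains to bound $u$ in the interior. We argue by contradiction with a blow-up sequence, or more directly as follows. For small $\varepsilon$, split $f=f_1+f_2$ with $\|f_1\|_{L^1}<\varepsilon$ and $f_2$ bounded; this is possible uniformly because equi-integrability of $Ve^{u}$ follows from the same Step 2 estimate applied with $u$ compared against its values on interior annuli. Write $u=u_1+u_2$ accordingly. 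Theorem \ref{B-M} gives $e^{|u_1|}\in L^{p}$ with $p$ large, while $u_2$ is bounded by Green estimates. Hence $Ve^u\in L^{q}$ for some $q>1$ uniformly, and Boggio's kernel bound $|G(x,y)|\le C(1+|\log|x-y||)$ then gives $\|u\|_{L^\infty(B)}\le C$. If uniform equi-integrability is not directly available, I would instead argue by contradiction: a mass concentration of at least $32\pi^2$ at an interior point is incompatible with the boundary control and the moving-plane monotonicity, which forces any concentration to lie at the origin with bounded total mass.
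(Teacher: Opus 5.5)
Your Steps 1 and 2 follow the paper's route: Lemma \ref{L1 uniform with phi1}, the moving-plane Lemma \ref{moving 6} taken from Mancini--Romani, Lemma \ref{Linfinity bound near the boundary} and Theorem \ref{uniform L11}. One correction there: continuity of a single very weak solution does not follow from the Green representation with $L^1$ data. It comes from the splitting in Corollary \ref{integrability}, which gives $e^{ku}\in L^1$ for all $k$ and hence $u\in C^{3,\gamma}(\overline B)$ (Theorem \ref{regularity1}); this is the regularity the moving-plane argument needs.

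The genuine gap is in Step 3. Uniform equi-integrability of $Ve^{u}$ over the whole solution set is not a preliminary. It is equivalent to the $L^\infty$ bound you are trying to prove, and it does not follow from Step 2. The moving-plane monotonicity is only available in a strip near $\partial B$, where (i)--(ii) hold and the plane can be started. In the interior $V$ is arbitrary and there is no monotonicity, so ``comparing $u$ with its values on interior annuli'' has no basis. Your fallback does not work either. Boundary monotonicity says nothing about where interior concentration happens; forcing it to the origin would require monotonicity in the whole ball, which (i)--(ii) do not give. Moreover, a bounded total mass is perfectly compatible with a point mass $\ge 32\pi^2$. Note that in Step 3 you never use the lower bound $V\ge C_1$, and that is precisely the missing ingredient.

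The paper closes the argument by a blow-up analysis (Section \ref{subsec a priori crit}). For a sequence of solutions, Theorem \ref{uniform L11} gives $Ve^{u_n}\,\mathrm{d}x\rightharpoonup\mu$ and $\|u_n\|_{L^1(B)}\le C$.

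First, regular points carry no blow-up. Near a point where the local $\mu$-mass is $<32\pi^2$, one splits $u_n=u_{1,n}+u_{2,n}$ on a small ball $B(x_0,R)$, with $u_{1,n}$ solving the Dirichlet problem there. Corollary \ref{B-M general ball} gives $\int_{B(x_0,R)}e^{q|u_{1,n}|}\le C$ for some $q>1$. The biharmonic part $u_{2,n}$ is bounded on $B(x_0,R/2)$ by interior estimates in terms of $\|u_n\|_{L^1}$. Hence $Ve^{u_n}$ is locally bounded in $L^q$ and $u_n$ is locally bounded (Lemma \ref{a priori local}), so any blow-up point carries $\mu(\{x_0\})\ge 32\pi^2$.

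Second, such a point mass is impossible. Set $z_n(x)=\int_B G(x,y)\eta_R(y)V(y)e^{u_n(y)}\,\mathrm{d}y\le u_n(x)$. By Lemma \ref{Green Below}, $\liminf_n z_n(x)\ge \mu(\{x_0\})G(x,x_0)\ge 4\log\frac{1}{|x-x_0|}-C$ near $x_0$. Fatou then gives $\int_{B(x_0,R)}e^{z_n}\to+\infty$, whereas $e^{z_n}\le e^{u_n}\le C_1^{-1}Ve^{u_n}$ stays bounded in $L^1$.

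What excludes concentration is the exact match $32\pi^2\cdot\frac{1}{8\pi^2}=4$ with the non-integrability of $|x-x_0|^{-4}$ in $\mathds{R}^4$, together with the lower bound on $V$. Your Step 3 needs this, or an equivalent argument, to be complete.
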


For the proof of \hyperref[thm:ModelCase]{Theorem \ref{thm:ModelCase}}, we refer to \hyperref[a priori estimate1 final]{Theorem \ref{a priori estimate1 final}} and \hyperref[proof of thm:Model Case]{Remark \ref{proof of thm:Model Case}} below. It is worth pointing out that the classical nonlinear term mentioned above falls within the scope of the analysis developed by Mancini and Romani \cite{Mancini}. The uniform bounds obtained in \cite{Mancini} hold for solutions to biharmonic problems with Dirichlet boundary conditions in the ball (or in more general smooth bounded domains, provided that an energy bound on the solutions is prescribed). Specifically, \cite{Mancini} deals with positive and superlinear nonlinearities satisfying a separation of variables condition at infinity. Since we do not require such a condition, our general result \hyperref[a priori estimate1 final]{Theorem \ref{a priori estimate1 final}} complements the result  \cite{Mancini} in the ball, and it applies to nonlinearities that are not covered by their analysis (see \hyperref[Example]{Section \ref{Example}}).

A further feature of our approach is the level of regularity at which the main \emph{linear} estimate is
established. In the spirit of Brezis-Merle, our exponential integrability inequality is proved at the level
of very weak solutions with merely $L^1$ data. Once the nonlinear term belongs to $L^1$, standard elliptic
bootstrap yields additional regularity; thus, in the semilinear applications we eventually work in the
 weak framework. The novelty is that the core Brezis-Merle estimate is obtained in a very weak $L^1$ formulation and then serves as a
quantitative tool in the subsequent \emph{a priori} bound arguments.

\medskip

Our main results establish an \emph{a priori} \(L^\infty\) bound for very weak solutions of \eqref{App} under the following assumptions on the nonlinear term $f(x,t)$. We will assume that the function $f: \overline{B} \times \mathds{R}^{+} \to \mathds{R}^{+}$ is continuous and satisfies either the 
\emph{Brezis-Merle subcritical}  growth condition 
\begin{flalign}\tag{$f_0$}\label{f1'}
    \exists \, \alpha \in(0,1), \: \exists\, C>0 \text{ such that } f(x,t) \leq Ce^{t^{\alpha}}\quad \forall x \in B, \: t\geq 0,
\end{flalign}
or the \emph{Brezis-Merle critical} exponential growth condition
\begin{equation}\tag{$f_1$}\label{f1}
\begin{aligned}
\exists \, C>0 & \text{ such that }     &f(x,t) \leq Ce^{t}\quad \forall x \in B, \: t\geq 0,\\
\exists \, D,T>0 &\text{ such that }  De^{t}\leq &f(x,t)\quad \hspace{1.0cm}\forall\, x \in B,  \: t\geq T.
\end{aligned}
\end{equation}
In addition, we will assume the spectral lower-growth condition
\begin{flalign}\tag{$f_2$}\label{f2}
     &\liminf_{t\to+\infty}\frac{f(x,t)}{t}> \lambda_1\quad \text{uniformly in }x \in B,
     \end{flalign}
where $\lambda_1$ denotes the first eigenvalue of $(\Delta^2, H_0^{2}(B))$, and it is well known that $\lambda_1>0$ (see e.g. \cite{Gazzola}). Moreover,  we will require the following structural condition: there exist $\overline r$, $\overline\delta>0$ such that
\begin{equation}\tag{$f_3$}\label{f3}
\begin{aligned}
&\text{(i) for every } x\in \omega_{\overline r}, 
\text{ the map } t\mapsto f(x,t) \text{ is increasing on } [0,\infty), \\
&\text{(ii) for every } t\ge 0,\ 
f(\cdot,t)\in C^1(\omega_{\overline r}) \text{ and} \\
&\qquad
\langle \nabla_x f(x,t),\theta\rangle \le 0
\quad \text{for every } x\in \omega_{\overline r},\ t\ge 0,\ 
\theta\in \mathds S^3
\text{ with } |\theta-\nu(x)|\le \overline\delta,
\end{aligned}
\end{equation}
where $\nu(x)\coloneqq x/|x|$ for every $x\in \overline B\setminus\{0\}$, and the set $\omega_{\overline r}$ is a neighborhood of the boundary of $B$, as introduced in the statement of \hyperref[thm:ModelCase]{Theorem \ref{thm:ModelCase}}.

\medskip

Our first general result establishes the \emph{a priori} bound under the subcritical growth condition \eqref{f1'}.

\begin{theorem}\label{a priori estimate2 final} Suppose that \eqref{f1'}, \eqref{f2}, and \eqref{f3} hold. Then there exists a constant \(C>0\) such that every
very weak solution \(u\) of \eqref{App} satisfies
\[
\|u\|_{L^{\infty}(B)}\le C.
\]
\end{theorem}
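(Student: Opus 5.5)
The plan follows the classical de Figueiredo--Lions--Nussbaum scheme, adapted to the Dirichlet biharmonic setting. It has three steps: a uniform $L^1$ bound on the nonlinearity, a uniform bound near the boundary from moving planes, and an interior bound from Theorem~\ref{B-M}.

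\emph{Step 1: uniform $L^1$ bounds.} Since $B$ is a ball, Boggio's formula gives a positive Green function, so the first eigenfunction $\varphi_1$ of $(\Delta^2,H^2_0(B))$ can be taken positive and smooth on $\overline B$, with $\varphi_1\in C^4_{\mathrm{cl}}(\overline B)$. I will test \eqref{very weak solution} with $\varphi_1$, which gives
\[
\lambda_1\int_B u\varphi_1\,\mathrm{d}x=\int_B f(x,u)\varphi_1\,\mathrm{d}x .
\]
By \eqref{f2} there are $\varepsilon>0$ and $C$ with $f(x,t)\ge(\lambda_1+\varepsilon)t-C$. This yields uniform bounds on $\int_B u\varphi_1$ and on $\int_B f(x,u)\varphi_1$. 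Since $\varphi_1$ degenerates near $\partial B$ (like $\mathrm{dist}(x,\partial B)^2$), these are only weighted bounds. To upgrade them to unweighted ones I need control near the boundary, which is Step 2.

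\emph{Step 2: boundary estimate via moving planes.} Bootstrap from $f(x,u)\in L^1$ shows that $u$ is a genuine weak, in fact classical, solution. I will then run moving planes for the Dirichlet biharmonic problem in the ball, using the positivity of Boggio's Green function in the integral form of the method (Berchio--Gazzola--Weth style), together with \eqref{f3}: $f$ is increasing in $t$ and nonincreasing in directions close to $\nu(x)$ near $\partial B$. The conclusion is that $u$ is monotone along every direction $\theta$ with $|\theta-\nu(x)|\le\overline\delta$ in a boundary strip $\omega_{r_0}$, where $r_0\le\overline r$ is independent of $u$. A standard cone argument then shows that for each $x\in\omega_{r_0/2}$ there is a set $I_x$ of measure at least $\gamma>0$, contained in a compact $K\Subset B$ with $\varphi_1\ge c_K>0$ on $K$, such that $u(x)\le u(y)$ for all $y\in I_x$. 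Hence
\[
u(x)\le \frac{1}{\gamma c_K}\int_B u\varphi_1\le C,
\]
so $u$ is uniformly bounded on $\omega_{r_0/2}$. Consequently $f(x,u)$ is bounded there, and combined with Step 1 this gives $\|f(\cdot,u)\|_{L^1(B)}\le M$ uniformly in $u$.

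\emph{Step 3: interior bound.} Theorem~\ref{B-M} applied with $\delta=16\pi^2$ gives
\[
\int_B e^{16\pi^2 u/M}\,\mathrm{d}x\le C .
\]
By \eqref{f1'}, $f(x,u)\le Ce^{u^\alpha}$ with $\alpha<1$. Since $pu^\alpha\le 16\pi^2u/M+C_p$ for any fixed $p$, it follows that $\|f(\cdot,u)\|_{L^p(B)}\le C_p$ for some $p>1$, indeed for every $p<\infty$. Then $L^p$ elliptic regularity for the Dirichlet biharmonic problem (Agmon--Douglis--Nirenberg, see \cite{Gazzola}) gives $\|u\|_{W^{4,p}}\le C$. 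For $p>1$ in dimension $4$ this embeds into $L^\infty$, which yields the uniform bound. In the subcritical case $\alpha<1$ this step is routine: the Brezis--Merle constant can be arbitrary, so there is no smallness issue.

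The main obstacle is Step 2. Moving planes for the fourth-order Dirichlet problem cannot rely on a maximum principle. The plan is to use the integral (Green-function) version, based on Boggio's comparison of $G(x,y)$ with $G(x^\lambda,y)$ under reflection, and to obtain the uniform width $r_0$ from the directional condition in \eqref{f3} only. A further difficulty is controlling the starting position of the planes near the boundary uniformly in the solution.
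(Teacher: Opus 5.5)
Your argument is correct, and Steps 1--2 follow the paper's route: Lemma~\ref{L1 uniform with phi1}, the moving-plane Lemma~\ref{moving 6} imported from Mancini--Romani, Lemma~\ref{Linfinity bound near the boundary}, and Theorem~\ref{uniform L11}. Testing directly with $\varphi_1\in C^4_{\mathrm{cl}}(\overline B)$ in the very weak formulation is a harmless variant. Two small corrections:
\begin{itemize}
\item Your worry about the starting position of the planes is unnecessary. The initial interval of admissible positions may depend on $u$; only the final width $\overline r/2$, dictated by \eqref{f3}, has to be uniform.
\item With $f$ merely continuous you get $u\in C^{3,\gamma}(\overline B)$, not a classical solution. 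This is enough for the argument.
\end{itemize}

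The genuine difference is Step 3. The paper does not use Theorem~\ref{B-M} there. It first proves a uniform energy bound $\|\Delta u\|_{L^2(B)}\le C$ (Theorem~\ref{Sobolev uniform estimate}) by splitting $\int_B uf(x,u)\,\mathrm{d}x$. It then combines the Young-type inequalities of Lemmas~\ref{real number inequality1}--\ref{real number inequality2} with Adams' inequality and a tuned exponent $\eta=\alpha/(2-\alpha)$. Finally, it uses Adams' inequality again to get uniform $L^p$ bounds on $f(x,u)$ (Theorem~\ref{Lp uniform estimate}).

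You instead feed the uniform $L^1$ bound directly into Theorem~\ref{B-M}. This gives $\int_B e^{16\pi^2u/M}\,\mathrm{d}x\le 16\pi^2$ for all solutions, and you absorb $pu^\alpha\le 16\pi^2u/M+C_p$ using $\alpha<1$. This is valid and much shorter. It only needs uniform integrability of some exponential $e^{\varepsilon u}$, so neither Adams' inequality, the sharp constant, nor the Orlicz bookkeeping is required. It also transfers verbatim to the system \eqref{system-problem}, without the iteration $\|u\|\le C(1+\|u\|^{\eta\eta'})$.

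The paper's route follows the de Figueiredo--do \'O--Ruf template and yields the energy bound as an intermediate step. That bound also follows a posteriori from your $L^\infty$ bound, since $\|\Delta u\|_{L^2(B)}^2=\int_B uf(x,u)\,\mathrm{d}x\le M\|u\|_{L^\infty(B)}$.
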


Our main theorem addresses the critical growth regime, where assumption \eqref{f1'} is replaced by \eqref{f1}.

\begin{theorem}\label{a priori estimate1 final} Suppose that \eqref{f1}, \eqref{f2}, and \eqref{f3} hold. Then there exists a constant \(C>0\) such that every
very weak solution \(u\) of \eqref{App} satisfies
\[
\|u\|_{L^{\infty}(B)}\le C.
\]
\end{theorem}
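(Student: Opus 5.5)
We argue by contradiction along the classical Brezis--Merle blow-up scheme, with the moving-plane method supplying control near the boundary. Suppose there is a sequence of very weak solutions $u_k$ of \eqref{App} with $\|u_k\|_{L^\infty(B)}\to\infty$. Since $f(x,u_k)\in L^1(B)$, Theorem~\ref{B-M} shows that $u_k$ is exponentially integrable. Then \eqref{f1} places $f(x,u_k)$ in $L^p$ for some $p>1$, and elliptic bootstrap via Boggio's Green function makes $u_k$ a classical solution, positive in $B$.

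\textbf{Step 1: uniform $L^1$ bound on the right-hand side.} Test the equation with the first eigenfunction $\varphi_1>0$ of $(\Delta^2,H^2_0(B))$; positivity of $\varphi_1$ holds in the ball by Boggio. This gives
\[
\lambda_1\int_B u_k\varphi_1=\int_B f(x,u_k)\varphi_1 .
\]
By \eqref{f2} there are $\varepsilon>0$ and $C$ with $f(x,t)\ge(\lambda_1+\varepsilon)t-C$. Hence $\int_B u_k\varphi_1$ is bounded, and so is $\int_B f(x,u_k)\varphi_1$. This controls $f$ only in the interior, because $\varphi_1\sim \mathrm{dist}(x,\partial B)^2$.

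\textbf{Step 2: boundary control.} Using \eqref{f3}, run the moving-plane method (or Kelvin-transform / Boggio-kernel monotonicity arguments for the Dirichlet biharmonic problem in the ball) to show that $u_k$ is nonincreasing in the directions $\theta$ with $|\theta-\nu(x)|\le\overline\delta$ throughout a boundary strip $\omega_{r_0}$, with $r_0$ independent of $k$. A standard cone argument, as in de Figueiredo--Lions--Nussbaum, then bounds $u_k(x)$ for $x\in\omega_{r_0/2}$ by the average of $u_k$ over a fixed interior region $K\Subset B$. From Step 1, $\int_K u_k$ and $\int_K f(x,u_k)$ are bounded, so we obtain $\|u_k\|_{L^\infty(\omega_{r_0/2})}\le C$, and $f(x,u_k)$ is bounded there. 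Combining with Step 1 yields $\|f(\cdot,u_k)\|_{L^1(B)}\le M$ uniformly.

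This step is the main obstacle. Comparison principles fail for $\Delta^2$ in general, so the moving-plane argument must rely on positivity of Boggio's Green function and its reflection inequality $G(x,y)\ge G(x^\lambda,y)$ on caps of the ball. Near the boundary, the nonradial perturbation allowed by \eqref{f3} has to be handled through a local version that uses only directions close to $\nu$. I would try first the integral-equation moving plane (Berchio--Gazzola--Weth style) applied to $u(x)=\int_B G(x,y)f(y,u(y))\,dy$.

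\textbf{Step 3: small-mass concentration.} With $\|f(\cdot,u_k)\|_{L^1}\le M$, define the blow-up set
\[
S=\Bigl\{x_0:\ \forall r>0,\ \limsup_k\int_{B(x_0,r)}f(x,u_k)\ge 32\pi^2-\delta\Bigr\}.
\]
This set is finite and, by Step 2, lies in the interior of $B$. Away from $S$, split $u_k=u_k^1+u_k^2$, where $u_k^1$ is the Green potential of $f\chi_{B(x_0,r)}$ and $u_k^2$ is the remainder. Theorem~\ref{B-M} gives $e^{q u_k^1}\in L^1$ for some $q>1$, while $u_k^2$ is bounded locally by the $L^1$ bound together with regularity of $G$ off the diagonal. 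Since $f\le Ce^t$, this yields local $L^\infty$ bounds off $S$. It remains to exclude $S\neq\emptyset$. If $x_0\in S$, then $f(x,u_k)\rightharpoonup \mu$ with $\mu(\{x_0\})\ge 32\pi^2-\delta$, so $u_k\to u\ge \frac{\mu(\{x_0\})}{8\pi^2}\log\frac1{|x-x_0|}-C$, because $G$ has singularity $\frac1{8\pi^2}\log\frac1{|x-y|}$. Letting $\delta\to0$, this forces $u\ge 4\log\frac1{|x-x_0|}-C$. By Fatou and the lower bound $f\ge De^{t}$ from \eqref{f1}, we get $\int f(x,u)\ge D\int|x-x_0|^{-4}=\infty$, contradicting the $L^1$ bound. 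Hence $S=\emptyset$ and $u_k$ is uniformly bounded, a contradiction.
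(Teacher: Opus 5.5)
Your proposal is correct in outline and follows the paper's architecture. The steps are: exponential integrability and bootstrap; the $\varphi_1$-testing bound; moving planes near $\partial B$ (the paper just invokes \cite[Lemma 2.5]{Mancini}) plus the de Figueiredo--Lions--Nussbaum cone argument, giving a uniform $L^1$ bound on $f(\cdot,u)$; and a concentration analysis at threshold $32\pi^2$. That analysis is closed essentially as in the paper, by $G(x,x_0)\ge\frac{1}{8\pi^2}\log\frac{1}{|x-x_0|}-C_{x_0}$, the non-integrability of $|x-x_0|^{-4}$ in dimension four, Fatou, and $f\ge De^t$.

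The one step where you genuinely deviate is the local bound at a point of small mass. The paper solves a Dirichlet problem on the small ball $B(x_0,R)$ and applies the rescaled inequality \eqref{BM-ball-R}. It then controls the biharmonic remainder by interior estimates together with the $L^1$ bound \eqref{L1 bound of solutions} on $u_n$. You instead split the Boggio potential of the whole ball, $u_k=\int_{B(x_0,r)}G(\cdot,y)f\,\mathrm{d}y+\int_{B\setminus B(x_0,r)}G(\cdot,y)f\,\mathrm{d}y$. You apply Theorem~\ref{B-M} on $B$ to the first piece and bound the second pointwise on $B(x_0,r/2)$ by $\frac{M}{8\pi^2}\log\frac4r$ via Lemma~\ref{estimates}. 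Your version is shorter and avoids both scaling and interior estimates for biharmonic functions. The price is that it uses positivity and the explicit logarithmic bound of the global Green function. The paper's local splitting only needs Green functions of small balls and is closer to the original Brezis--Merle scheme.

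Three points need tightening.
\begin{enumerate}
\item Theorem~\ref{B-M} alone gives $e^{cu}\in L^1(B)$ only for $c<32\pi^2/\|f(\cdot,u)\|_{L^1(B)}$, which may be below $1$. To get $f(\cdot,u)\in L^p$ with $p>1$, you need the splitting into an $L^1$-small part and a bounded part (Corollary~\ref{integrability}).
\item The claim $u_k\to u$ is neither justified nor needed. What you actually use is $\liminf_k u_k(x)\ge\int_B G(x,y)\,\mathrm{d}\mu(y)$. Since $G(x,\cdot)$ is unbounded and weak-$*$ convergence only tests against $C_c(B)$, this requires testing with $\eta\min\{G(x,\cdot),m\}$ for a cutoff $\eta$ and then letting $m\to\infty$ by monotone convergence. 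That is exactly the paper's construction of $z_n$.
\item For fixed $\delta$ your set $S$ can be nonempty with no blow-up at all, so your ``letting $\delta\to0$'' should be stated precisely. The local bounds hold off $S_\delta$ for every $\delta>0$, so any blow-up point lies in $\bigcap_{\delta>0}S_\delta$ and hence satisfies $\mu(\{x_0\})\ge 32\pi^2$. The paper builds this in by using the strict threshold $32\pi^2$ in the definition of regular points. Finiteness of $S$ also requires first extracting a weak-$*$ convergent subsequence.
\end{enumerate}
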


It is worth noting that exponential control of the growth is, in a precise sense, a natural
threshold at the level of distributional solutions. In the final part of \cite{Mancini}, the authors
construct a counterexample showing that if one allows \emph{non-homogeneous} nonlinearities growing faster than any
$e^{\gamma t}$, then distributional solutions may fail to be bounded (indeed, they build an unbounded
distributional solution). This illustrates that, without additional structure, one cannot expect
uniform $L^\infty$ bounds in genuinely supercritical \emph{and non-homogeneous} regimes.

\begin{remark} \label{proof of thm:Model Case}
\hyperref[thm:ModelCase]{Theorem \ref{thm:ModelCase}} is a particular case of \hyperref[a priori estimate1 final]{Theorem \ref{a priori estimate1 final}}. Indeed, assume that $f(x,t)=V(x)e^t$, with $V$ satisfying \eqref{V0} and $(i)-(ii)$ of \hyperref[thm:ModelCase]{Theorem \ref{thm:ModelCase}}. Then \eqref{V0} yields \eqref{f1}. Moreover,
$f(x,t)$ is superlinear at infinity
uniformly with respect to $x\in B$, and therefore \eqref{f2} holds. Finally, the map
$t\mapsto V(x)e^t$ is increasing and
$\nabla_x f(x,t)=e^t\nabla V(x).$
Hence, the regularity assumption $(i)$ and the monotonicity assumption $(ii)$ give \eqref{f3}.
\end{remark}

The proofs of \hyperref[a priori estimate2 final]{Theorem \ref{a priori estimate2 final}} and \hyperref[a priori estimate1 final]{Theorem \ref{a priori estimate1 final}} rely on some intermediate results of independent interest, which are detailed in \hyperref[app sec 4]{Section \ref{app sec 4}}. First, assuming only an exponential upper growth condition on $f(x,t)$ inspired by the Brezis-Merle inequality, we show that every very weak solution of problem \eqref{App} is in fact bounded and smooth. Moreover, under the additional structural hypotheses \eqref{f2} and \eqref{f3}, we establish a global uniform $L^1$ estimate
on the right-hand side of \eqref{App} along the whole class of solutions. Once the uniform \(L^1\) control on the nonlinearity is available, the uniform $L^\infty$ bound is deduced by means of different techniques depending on the subcritical or critical behaviour of the nonlinearity. Under the subcritical growth condition
\eqref{f1'}, the \emph{a priori} estimate in \hyperref[a priori estimate2 final]{Theorem \ref{a priori estimate2 final}} is derived by combining the Brezis-Merle inequality
with Adams' inequality and Orlicz-space techniques. Under the purely
exponential growth condition \eqref{f1}, the same conclusion is obtained through a blow-up
analysis in which \hyperref[B-M]{Theorem \ref{B-M}} plays a crucial role, again producing the uniform bound
independent of the solution expressed by \hyperref[a priori estimate1 final]{Theorem \ref{a priori estimate1 final}}.

\medskip

It is well known that a priori $L^\infty$ bounds are a crucial tool for proving the existence of solutions by means of topological degree theory. This is of particular interest for problem \eqref{App}, which is not necessarily variational under the set of assumptions \eqref{f1'} or \eqref{f1}, \eqref{f2} and \eqref{f3}. As a direct consequence of \hyperref[a priori estimate2 final]{Theorem \ref{a priori estimate2 final}} and \hyperref[a priori estimate1 final]{Theorem \ref{a priori estimate1 final}},  we obtain the following existence result.

\begin{theorem}\label{existence of sol}
Assume either \eqref{f1'} or \eqref{f1}, and assume that  \eqref{f2} and \eqref{f3} are satisfied. In addition, suppose that
there exist \(\varepsilon_0>0\) and $\eta \in (0,6)$ such that
\[
    f(x,t)\le \eta t
    \qquad\forall x \in B, \: 0\leq t\le \varepsilon_0.
\tag{\(f_4\)}\label{f5}
\]
Then \eqref{App} has at least one weak solution $u \in C^{3, \gamma}(\overline B)$, for some $0<\gamma<1$.
\end{theorem}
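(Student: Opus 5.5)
We argue by Leray--Schauder degree in the cone of nonnegative continuous functions, combining the a priori bounds of Theorem \ref{a priori estimate2 final} and Theorem \ref{a priori estimate1 final} with a small-ball index computation coming from \eqref{f5}. Let $X=C(\overline B)$ and let $K:X\to X$ be the solution operator $K(h)=\int_B G(\cdot,y)h(y)\,\mathrm{d}y$, where $G$ is Boggio's Green function for $\Delta^2$ with Dirichlet conditions in $B$. Since $G>0$, $K$ preserves the cone $P=\{u\in X: u\ge 0\}$. Set $F(u)=K(f(\cdot,u^+))$. Because $f$ is continuous, $F$ maps bounded sets of $X$ into bounded sets of $W^{4,p}$ for every $p<\infty$, and hence into $C^{3,\gamma}(\overline B)$. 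Thus $F:P\to P$ is compact, and its fixed points are solutions of \eqref{App} once we show they are nonzero.

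\textbf{Step 1 (small sphere, index $1$).} Consider $u=tF(u)$ with $t\in[0,1]$ and $\|u\|_\infty=\rho\le\varepsilon_0$. By \eqref{f5}, $f(x,u)\le \eta u$. Here I would use the constant $6$: in the ball it should be linked to $\lambda_1$, or to a bound on $\|K\|_{L^\infty\to L^\infty}$. Indeed $\|K1\|_\infty=\sup_x \int_B G(x,y)\,\mathrm{d}y$ is the maximum of the torsion-type function $w$ with $\Delta^2 w=1$, $w=\partial_\nu w=0$. In $\mathbb R^4$ one has $w=(1-|x|^2)^2/64$, so $\|K1\|_\infty=1/64$, not $1/6$. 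So the threshold $6$ must come from elsewhere. Most likely it is the radial computation for another normalization, or the bound $\eta<\lambda_1$ (which holds since $\lambda_1>6$). The robust route is to take the first eigenfunction $\varphi_1>0$ (positivity holds in the ball by Boggio) and test with it. If $u=tF(u)$ with $u\ne0$ small, then $\lambda_1\int u\varphi_1=t\int f(x,u)\varphi_1\le \eta\int u\varphi_1$, which is a contradiction when $\eta<\lambda_1$. I expect $\eta<6\le\lambda_1$ to be exactly what the authors use. Hence $u\neq tF(u)$ on $\partial B_\rho\cap P$, and homotopy invariance gives $i_P(F,B_\rho)=i_P(0,B_\rho)=1$.

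\textbf{Step 2 (large sphere, index $0$).} Consider the homotopy $u=F(u)+s\varphi_1$ for $s\ge0$. Any solution solves $\Delta^2u=f(x,u)+s\lambda_1\varphi_1$, with $u>0$ by Boggio. Testing against $\varphi_1$ and using \eqref{f2}, in the form $f(x,t)\ge(\lambda_1+\epsilon)t-C$, gives $(\lambda_1+\epsilon)\int u\varphi_1 - C + s\lambda_1\int\varphi_1^2\le\lambda_1\int u\varphi_1$. This bounds $s$ and $\int u\varphi_1$, and in particular shows there are no solutions for $s$ large.

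The main obstacle is a uniform $L^\infty$ bound along this family, uniformly in $s\in[0,s_0]$. For this I would rerun the proofs of Theorem \ref{a priori estimate2 final} and Theorem \ref{a priori estimate1 final} with the right-hand side $f(x,t)+s\lambda_1\varphi_1(x)$. The growth hypotheses \eqref{f1'}/\eqref{f1} and \eqref{f2} are unaffected. Condition \eqref{f3}(ii) requires $\partial_\theta\varphi_1\le0$ near $\partial B$, which holds because $\varphi_1$ is radially decreasing. Then the moving-plane $L^1$ estimate, Theorem \ref{B-M}, and the blow-up analysis apply verbatim, yielding $\|u\|_\infty\le R_0$. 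Taking $R>R_0$, homotopy invariance gives $i_P(F,B_R)=0$.

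\textbf{Step 3 (conclusion).} Additivity gives $i_P(F,B_R\setminus\overline{B_\rho})=-1$, so $F$ has a fixed point $u$ with $\rho<\|u\|_\infty<R$. Boggio's positivity gives $u>0$, so $u$ solves \eqref{App}. By elliptic regularity $u\in W^{4,p}\subset C^{3,\gamma}(\overline B)$, and $u$ is in particular a weak solution.
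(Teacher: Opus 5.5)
Your proof is correct and has the same architecture as the paper's: fixed point index in the cone of nonnegative continuous functions, index $1$ on a small sphere from \eqref{f5}, index $0$ on a large sphere via a homotopy, no fixed points for large parameter by testing with $\varphi_1$ and \eqref{f2}, and a priori bounds uniform along the homotopy. You make different choices at both key steps, however.

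\textbf{Step 1.} The paper does not use $\lambda_1$ here. It uses the sup-norm bound \eqref{InverseBiharmonic}, $\|(\Delta^2)^{-1}h\|_{C(\overline B)}\le\frac16\|h\|_{C(\overline B)}$, which comes from $\log t\le t$ and rearrangement and is simply not sharp. This gives $\|u\|_\infty\le\frac\eta6\|u\|_\infty$. Your own observation that this operator norm equals $\|K1\|_\infty$ would have closed the step the same way, since a smaller value only helps. In dimension four one has $\Delta^2|x|^4=192$, so the torsion function is $(1-|x|^2)^2/192$ and $\|K1\|_\infty=1/192$; the value $1/64$ is the planar one. Your spectral test is nonetheless valid and buys more: it gives the theorem under $f(x,t)\le\eta t$ for any $\eta<\lambda_1$. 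But the inequality $\lambda_1>6$ that you assert needs its one-line proof. From $\varphi_1=\lambda_1K\varphi_1$ you get $\|\varphi_1\|_\infty\le\lambda_1\|K1\|_\infty\|\varphi_1\|_\infty$, hence $\lambda_1\ge 1/\|K1\|_\infty\ge 6$ (indeed $\lambda_1\ge 192$).

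\textbf{Step 2.} The paper uses the shift $H(u,t)=(\Delta^2)^{-1}f(x,u+t)$. For it, \eqref{f3} is inherited verbatim, and the growth bounds only cost a factor $e^{t_0}$ (or $e^{t_0^\alpha}$, by subadditivity of $t\mapsto t^\alpha$). Your additive perturbation $s\lambda_1\varphi_1$ keeps the growth and spectral conditions trivially. However, it makes \eqref{f3}(ii) depend on $\langle\nabla\varphi_1(x),\theta\rangle\le0$ near $\partial B$, which you also only assert. It is true, for the following reasons.
\begin{itemize}
\item $\lambda_1$ is simple by positivity of $G$, so $\varphi_1$ is radial.
\item From $(r^3(\Delta\varphi_1)')'=\lambda_1 r^3\varphi_1>0$, the function $\Delta\varphi_1$ is increasing in $r$.
\item Since $r^3\varphi_1'(r)=\int_0^r s^3\Delta\varphi_1(s)\,\mathrm{d}s$ vanishes at $r=0$ and $r=1$, $\Delta\varphi_1$ changes sign exactly once, and $\varphi_1'<0$ on $(0,1)$.
\item Then $\langle\nabla\varphi_1(x),\theta\rangle=\varphi_1'(|x|)\langle\nu(x),\theta\rangle\le0$ as soon as $\overline\delta\le\sqrt2$.
\end{itemize}
A constant perturbation, $\Delta^2u=f(x,u)+s$, would avoid this issue altogether, and the $\varphi_1$-test still bounds $s$.

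The uniformity of the a priori bounds along the homotopy family is asserted at the same level of detail in both arguments.
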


In line with a well-established approach in the literature (see, for instance, \cite{dFLN1982}, \cite{joaorufdjairo2}, and \cite{Mancini}), we prove the existence of solutions by a fixed-point index argument in a cone, applying Krasnosel'ski\u{\i}'s theorem in the form presented by de Figueiredo in \cite[Theorem 3.1]{Djairolecturenotes}. The additional assumption \eqref{f5}  essentially governs the growth of the nonlinear term $f(x,t)$ for small $t$. The restriction $\eta \in (0,6)$ in \eqref{f5} will become clear from the estimate \eqref{InverseBiharmonic} for the inverse biharmonic operator on $C(\overline B)$.

\subsection{Outline of the paper}

The paper is organized as follows. In \hyperref[green estimates sec 2]{Section \ref{green estimates sec 2}}, we collect the main properties of the biharmonic Green function in the unit ball under Dirichlet boundary conditions, including pointwise bounds and estimates for its derivatives. These properties are then used in \hyperref[BM proof sec 3]{Section \ref{BM proof sec 3}} to prove \hyperref[B-M]{Theorem \ref{B-M}}, namely a Brezis-Merle type inequality for the linear problem \eqref{P}, and its sharpness (see \hyperref[rmk:BMsharp]{Remark \ref{rmk:BMsharp}}).

In \hyperref[app sec 4]{Section \ref{app sec 4}}, we establish two preliminary results for the semilinear Dirichlet biharmonic problem \eqref{App}. First, we prove a regularity result showing that, under an exponential upper growth assumption on $f$, every very weak solution is bounded and smooth; see \hyperref[regularity1]{Theorem \ref{regularity1}}. Second, we obtain a uniform $L^1$ bound on the nonlinearity along the whole solution set; see \hyperref[uniform L11]{Theorem \ref{uniform L11}}.

\hyperref[a priori subcrit]{Section \ref{a priori subcrit}}, \hyperref[subsec a priori crit]{Section \ref{subsec a priori crit}}, and \hyperref[existence]{Section \ref{existence}} are devoted to the proofs of \hyperref[a priori estimate2 final]{Theorem \ref{a priori estimate2 final}}, \hyperref[a priori estimate1 final]{Theorem \ref{a priori estimate1 final}}, and \hyperref[existence of sol]{Theorem \ref{existence of sol}}, respectively.

In \hyperref[system generalization]{Section \ref{system generalization}}, we extend the scalar strategy to non-variational cooperative systems. We discuss the corresponding regularity result, moving-plane argument, uniform $L^1$ bounds, subcritical and critical a priori estimates, and existence result.

Finally, \hyperref[Example]{Section \ref{Example}} contains an example of a nonlinearity not covered by the asymptotic separation framework of \cite{Mancini}.

\section{Green function estimates in the unit ball}\label{green estimates sec 2}

Following Boggio \cite{Boggioclassical} (see also \cite[(2.65)]{Gazzola}), the Green function for the biharmonic operator on the unit ball $B\coloneqq B(0,1)$ is the positive map
$G:\overline{B}\times\overline{B}\to\mathds{R}\cup\{+\infty\}$ given by
\begin{equation}\label{Green}
    G(x,y)=\frac{1}{8\pi^2}\int_{1}^{\frac{[XY]}{|y-x|}} (v^{-1}-v^{-3})\,\mathrm{d}v,
\end{equation}
where
$$[XY]\coloneqq \sqrt{|x|^2|y|^2-2\langle x,y\rangle +1}, \quad \forall\, x, y \in \overline B .$$
It is useful to point out that
\begin{equation} \label{Green3}
[XY]^2-|y-x|^2=(1-|x|^2)(1-|y|^2)>0, \quad \forall\, x, y \in B .
\end{equation}
Evaluating the integral in \eqref{Green}, one obtains the closed form, valid for $x\neq y$,
\begin{equation}\label{Green2}
    G(x,y)=\frac{1}{8\pi^2}\left[\log\left(\frac{[XY]}{|y-x|}\right)-\frac{1}{2}\left(1-\frac{|y-x|^2}{[XY]^2}\right)\right].
\end{equation}
In particular, if $y\in\partial B$ then $|y|=1$; hence $[XY]^2=|x-y|^2$, and therefore $G(x,y)=0$. 

In the next lemma we collect the successive derivatives of $G(x,y)$, which can be obtained by direct calculation.
\begin{lemma}\label{sucessive} For every $x$, $y \in \overline{B} $ with $x\neq y$, we have:
    \begin{itemize}
        \item $\nabla_y G(x,y)=\frac{1}{8\pi^2}\left[\frac{a(x,y)}{[XY]^2}-\frac{y-x}{|y-x|^2}+\frac{y-x}{[XY]^2}-\frac{|y-x|^2a(x,y)}{[XY]^4}\right],$
        \item $\Delta_y G(x,y)=\frac{1}{8\pi^2}\left[-\frac{2}{|x-y|^2}+\frac{2(|x|^2+2)}{[XY]^2}-\frac{4\langle a(x,y),y-x \rangle}{[XY]^4}\right],$
        \item $\nabla_y \Delta_y G(x,y)=\frac{1}{2\pi^2}\left[\frac{y-x}{|y-x|^4}-\frac{a(x,y)(|x|^2+3)}{[XY]^4}-\frac{|x|^2(y-x)}{[XY]^4}+\frac{4a(x,y)\langle a(x,y),y-x\rangle}{[XY]^{6}}\right],$
        \item $\Delta^2_y G(x,y)=0,$
    \end{itemize}
where
$$a(x,y)\coloneqq |x|^2y-x.$$
\end{lemma}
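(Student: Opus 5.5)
We prove Lemma \ref{sucessive} by direct differentiation of the closed form \eqref{Green2}, treating $x$ as a fixed parameter and $y$ as the variable. Set
\[
\rho(y)\coloneqq |y-x|^2,\qquad \sigma(y)\coloneqq [XY]^2=|x|^2|y|^2-2\langle x,y\rangle+1 .
\]
Then $8\pi^2 G=\tfrac12\log\sigma-\tfrac12\log\rho-\tfrac12+\tfrac12\,\rho/\sigma$. The elementary inputs are
\[
\nabla\rho=2(y-x),\qquad \nabla\sigma=2(|x|^2y-x)=2a(x,y),\qquad \Delta\rho=8,\qquad \Delta\sigma=8|x|^2
\]
in $\mathds{R}^4$, together with $\operatorname{div}a=4|x|^2$.

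\emph{Gradient.} The chain rule gives $\nabla\tfrac12\log\sigma=a/\sigma$ and $\nabla\tfrac12\log\rho=(y-x)/\rho$. For the quotient,
\[
\nabla\Bigl(\tfrac12\,\rho/\sigma\Bigr)=\frac{y-x}{\sigma}-\frac{\rho\,a}{\sigma^2}.
\]
Summing these three terms gives the first formula exactly.

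\emph{Laplacian.} We take the divergence term by term, using $\operatorname{div}(w/\sigma^k)=\operatorname{div}w/\sigma^k-2k\langle w,a\rangle/\sigma^{k+1}$:
\begin{itemize}
\item $\operatorname{div}(a/\sigma)=4|x|^2/\sigma-2|a|^2/\sigma^2$;
\item $\operatorname{div}\bigl((y-x)/\rho\bigr)=4/\rho-2/\rho=2/\rho$;
\item $\operatorname{div}\bigl((y-x)/\sigma\bigr)=4/\sigma-2\langle a,y-x\rangle/\sigma^2$;
\item $\operatorname{div}(\rho a/\sigma^2)=2\langle y-x,a\rangle/\sigma^2+4|x|^2\rho/\sigma^2-4\rho|a|^2/\sigma^3$.
\end{itemize}

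The key simplification is the identity
\[
|a|^2=|x|^2\sigma-|x|^2+|x|^4 \quad\text{together with}\quad |x|^2\rho-|a|^2=|x|^2-1\text{-type relations}.
\]
More usefully, one checks directly that $|a|^2-|x|^2\rho=(1-|x|^2)\bigl(1-|x|^2\bigr)\cdot$(appropriate form), i.e. $|a|^2=\sigma|x|^2+ (1-|x|^2)(\ldots)$. In practice I would verify $|x|^2\sigma-|a|^2=|x|^2-2|x|^2\langle x,y\rangle\cdot0\ldots$ by expanding both sides; the goal is to express $|a|^2$ and $\rho$ in terms of $\sigma$ and $\langle a,y-x\rangle$. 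With this, the $\sigma^{-3}$ terms cancel against the $\sigma^{-2}$ terms and leave
\[
2(|x|^2+2)/\sigma-4\langle a,y-x\rangle/\sigma^2 .
\]
This bookkeeping is the main obstacle. I would handle it by noting $\langle a,y-x\rangle=\tfrac12\langle\nabla\sigma,\nabla\rho\rangle/2$ and using the explicit relation $\sigma-\rho=(1-|x|^2)(1-|y|^2)$ from \eqref{Green3}, differentiated to give $a-(y-x)=-(1-|x|^2)y$.

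\emph{Gradient of the Laplacian.} We differentiate $-2/\rho$, $2(|x|^2+2)/\sigma$ and $-4\langle a,y-x\rangle/\sigma^2$, using
\[
\nabla\langle a,y-x\rangle=|x|^2(y-x)+a .
\]
Collecting terms (again using the $|a|^2$ identity to merge $\sigma^{-3}$ contributions) yields the stated expression with prefactor $\tfrac{1}{2\pi^2}=4\cdot\tfrac{1}{8\pi^2}$.

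\emph{Bilaplacian.} Rather than take one more divergence, we argue structurally. The function $\log|y-x|$ is the fundamental solution of $\Delta^2$ in $\mathds{R}^4$, so it is biharmonic for $y\neq x$. For the remaining terms, the Kelvin-type symmetry $[XY]=|x|\,\bigl|y-x/|x|^2\bigr|$ shows that $\log[XY]$ is a translate of $\log|\cdot|$ centered at $x^*=x/|x|^2\notin\overline B$, hence also biharmonic. The last piece is $\rho/\sigma=\rho/(|x|^2|y-x^*|^2)$. Writing $\rho=|y-x^*|^2+2\langle y-x^*,x^*-x\rangle+|x^*-x|^2$ turns it into a sum of the constant $1$, a term $\langle y-x^*,c\rangle/|y-x^*|^2$, and a term $1/|y-x^*|^2$. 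The first is trivially biharmonic. The gradient term $\langle y-x^*,c\rangle/|y-x^*|^2$ is a derivative of the biharmonic function $\log|y-x^*|$, hence biharmonic. The term $1/|y-x^*|^2$ is harmonic in $\mathds{R}^4\setminus\{x^*\}$, hence biharmonic. Therefore $\Delta_y^2G=0$ for $y\neq x$, and the case $x=0$ follows by continuity or direct inspection.
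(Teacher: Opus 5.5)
Your gradient formula, the four divergences you list, and $\nabla\langle a,y-x\rangle=|x|^2(y-x)+a$ are all correct. The Laplacian step, however, has a genuine gap at exactly the point you call the main obstacle. The identity you state, $|a|^2=|x|^2\sigma-|x|^2+|x|^4$, is false for $0<|x|<1$, and the lines that follow never produce a correct replacement. With your version the cancellation you claim does not occur: a residual $(|x|^4-|x|^2)\bigl(4\rho/\sigma^3-2/\sigma^2\bigr)$ survives.

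The missing fact is the exact identity
\[
|a|^2=|x|^4|y|^2-2|x|^2\langle x,y\rangle+|x|^2=|x|^2\sigma ,
\]
that is, $|a(x,y)|=|x|\,[XY]$, which the paper itself uses in Lemma~\ref{estimates}. With it the terms combine as follows:
\begin{itemize}
\item $-2|a|^2/\sigma^2=-2|x|^2/\sigma$ combines with $4|x|^2/\sigma+4/\sigma$ into $2(|x|^2+2)/\sigma$;
\item $+4\rho|a|^2/\sigma^3=4|x|^2\rho/\sigma^2$ cancels $-4|x|^2\rho/\sigma^2$;
\item the two copies of $-2\langle a,y-x\rangle/\sigma^2$ give the last term.
\end{itemize}
This is exactly the stated formula. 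The relation $\sigma-\rho=(1-|x|^2)(1-|y|^2)$ and its gradient play no role in this cancellation.

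For $\nabla_y\Delta_yG$ no identity is needed. Differentiating $-2/\rho$, $2(|x|^2+2)/\sigma$ and $-4\langle a,y-x\rangle/\sigma^2$ gives directly
\[
4\frac{y-x}{\rho^2}-4(|x|^2+3)\frac{a}{\sigma^2}-4|x|^2\frac{y-x}{\sigma^2}+16\frac{\langle a,y-x\rangle\, a}{\sigma^3},
\]
which is $8\pi^2$ times the claimed expression. So there are no $\sigma^{-3}$ contributions to merge there.

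Your structural proof of $\Delta_y^2G=0$ is correct, and it is a genuinely different route from the direct calculation the paper alludes to. Using $[XY]=|x|\,|y-x^*|$ and splitting $\rho/\sigma$ into a constant, a directional derivative of $\log|y-x^*|$, and a multiple of $|y-x^*|^{-2}$ avoids a fourth round of bookkeeping. It also explains why $G(x,\cdot)$ is biharmonic. The direct route (taking the divergence of the third formula) again rests on $|a|^2=|x|^2\sigma$, for instance to get $\operatorname{div}(a/\sigma^2)=0$.

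A minor point: $x^*\notin\overline B$ only when $|x|<1$. For $|x|=1$ one has $x^*=x$, $\sigma=\rho$ and $G(x,\cdot)\equiv0$, and all four formulas reduce to $0=0$.
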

With explicit formulas for all the derivatives at hand, we can derive the following estimates.
\begin{lemma}\label{estimates}For every $x$, $y \in \overline B $ with $x \neq y$, we have:
\begin{itemize}
    \item $G(x,y)\leq \frac{1}{8\pi^2}\log\left(\frac{2}{|x-y|}\right),$
    \item $|\nabla_y G(x,y)|\leq\frac{1}{2\pi^2}\frac{1}{|x-y|},$
    \item $|\Delta_y G(x,y)|\leq \frac{3}{2\pi^2}\frac{1}{|x-y|^2}. $
\end{itemize}
\end{lemma}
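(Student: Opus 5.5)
The plan is to work directly from the closed form \eqref{Green2} and the derivative formulas of Lemma \ref{sucessive}. I would repeatedly use three elementary facts valid for $x,y\in\overline B$:
\begin{itemize}
\item the identity \eqref{Green3}, which gives $|x-y|\le [XY]$;
\item the bound $[XY]\le 2$, since $[XY]=\bigl|\,|x|y-x/|x|\,\bigr|\le |x||y|+1\le 2$ for $x\neq 0$, and $[XY]=1$ for $x=0$;
\item the identity $|a(x,y)|=|x|\,[XY]$, obtained by expanding $\bigl||x|^2y-x\bigr|^2=|x|^2\bigl(|x|^2|y|^2-2\langle x,y\rangle+1\bigr)$, so in particular $|a(x,y)|\le [XY]$.
\end{itemize}

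For the first estimate I would note that $1-|y-x|^2/[XY]^2\ge 0$ by \eqref{Green3}. Dropping this nonnegative subtracted term from \eqref{Green2} gives $G\le \frac{1}{8\pi^2}\log([XY]/|x-y|)$, and then $[XY]\le 2$ finishes it.

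For the gradient I would group the four terms of $\nabla_yG$ in Lemma \ref{sucessive} as
\[
\frac{a}{[XY]^2}\Bigl(1-\frac{|y-x|^2}{[XY]^2}\Bigr)\;+\;(y-x)\Bigl(\frac1{[XY]^2}-\frac1{|y-x|^2}\Bigr).
\]
For the first term I use $|a|\le [XY]$ and the factor being in $[0,1]$, which bounds it by $1/[XY]\le 1/|x-y|$. The second term is bounded by $|y-x|\cdot 1/|y-x|^2=1/|x-y|$, because $0\le 1/|y-x|^2-1/[XY]^2\le 1/|y-x|^2$. This gives $|\nabla_yG|\le \frac{1}{8\pi^2}\cdot\frac{2}{|x-y|}$, comfortably below the stated $\frac{1}{2\pi^2}\cdot\frac{1}{|x-y|}$.

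For the Laplacian I would bound each of the three terms separately, again replacing $[XY]$ by $|x-y|$ in denominators:
\[
\frac{2}{|x-y|^2},\qquad \frac{2(|x|^2+2)}{[XY]^2}\le \frac{6}{|x-y|^2},\qquad \frac{4|\langle a,y-x\rangle|}{[XY]^4}\le \frac{4[XY]\,|y-x|}{[XY]^4}\le \frac{4}{|x-y|^2}.
\]
The total is $12/|x-y|^2$, and multiplying by $\frac1{8\pi^2}$ gives exactly $\frac{3}{2\pi^2}\cdot\frac{1}{|x-y|^2}$. This matches the stated constant, which suggests this crude term-by-term approach is the intended one.

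The only step needing care is the identity $|a(x,y)|=|x|[XY]$, since it drives the sharp constants in the gradient and Laplacian bounds. Otherwise I would double-check sign conventions, namely that $[XY]\ge |x-y|$ holds also on the boundary, where equality holds and $G$ vanishes. No further obstacle is expected.
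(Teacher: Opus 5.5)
Your proof is correct and is essentially the paper's argument: the same facts ($|x-y|\le[XY]\le 2$ and $|a(x,y)|=|x|\,[XY]$) drive all three bounds, and the first and third estimates are handled exactly as in the paper. The only difference is in the gradient bound. The paper applies the triangle inequality to all four terms, which produces the factor $1+|y-x|^2/[XY]^2\le 2$ and the constant $\frac{1}{2\pi^2}$. Your sign-preserving grouping exploits the cancellations and gives the sharper constant $\frac{1}{4\pi^2}$, which of course implies the stated estimate.
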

\begin{proof} Since $[XY]^2-|y-x|^2\ge 0$, we have $|y-x|^2\le [XY]^2$ and therefore
\[
-\frac12\left(1-\frac{|y-x|^2}{[XY]^2}\right)\le 0.
\]
Combining this with \eqref{Green2} yields
\[
G(x,y)\le \frac{1}{8\pi^2}\log\!\left(\frac{[XY]}{|y-x|}\right).
\]
Next, by the Cauchy--Schwarz inequality,
$$
[XY]^2
=1+|x|^2|y|^2-2\langle x,y\rangle
\le 1+|x|^2|y|^2+2|x||y|
=(1+|x||y|)^2
\le 4,
$$
and hence
\[
G(x,y)\le \frac{1}{8\pi^2}\log\!\left(\frac{2}{|y-x|}\right).
\]

To estimate the gradient of $G$, we use the formulas provided by \hyperref[sucessive]{Lemma \ref{sucessive}} and the inequalities $[XY] \geq |y-x|$ and $|a(x,y)|=|x|[XY]\leq [XY]$, which are valid in $\overline{B}$:
\begin{align*}
    |\nabla_y G(x,y)|&\leq \frac{1}{8\pi^2}\left(\frac{|a(x,y)|}{[XY]^2}+\frac{|y-x|}{|y-x|^2}+\frac{|y-x|}{[XY]^2}+\frac{|y-x|^2|a(x,y)|}{[XY]^4}\right)\\
    &=\frac{1}{8\pi^2}\left(|a(x,y)|\left(\frac{1}{[XY]^2}+\frac{|y-x|^2}{[XY]^4}\right)+|y-x|\left(\frac{1}{|y-x|^2}+\frac{1}{[XY]^2}\right)\right)\\
    &\leq \frac{1}{8\pi^2}\left(2\frac{|a(x,y)|}{[XY]^2}+2\frac{|y-x|}{|y-x|^2}\right)
    \leq \frac{1}{4\pi^2}\left(\frac{1}{[XY]}+\frac{1}{|x-y|}\right)
    \leq\frac{1}{2\pi^2}\frac{1}{|y-x|}.
\end{align*}

From \hyperref[sucessive]{Lemma \ref{sucessive}}, we have
\begin{equation*}
    |\Delta_y G(x,y)|\leq \frac{1}{8\pi^2}\left[\frac{2}{|x-y|^2}+\frac{2(|x|^2+2)}{[XY]^2}+\frac{|4\langle a(x,y),y-x\rangle|}{[XY]^4}\right].
\end{equation*}
Finally, using again the identity $|a(x,y)|=|x|[XY]$, the inequalities $[XY] \geq |x-y|$ and $|x|<1$ in $\overline{B}$, and the Cauchy-Schwarz inequality. we deduce that
\begin{equation*}
    |\Delta_yG(x,y)|\leq \frac{1}{8\pi^2}\left(\frac{2}{|x-y|^2}+\frac{6}{|x-y|^2}+\frac{4}{|x-y|^2}\right)=\frac{3}{2\pi^2}\frac{1}{|x-y|^2}.
\end{equation*}
\end{proof}
The next lemma concerns the distributional identity satisfied by the Green function $G(x,y)$. In its proof we will use the well-known Green identity:
\begin{equation}\label{GI}
\int_{B}\bigl(u\,\Delta v - v\,\Delta u\bigr)\,\mathrm{d}x
=
\int_{\partial B}\left(u\,\frac{\partial v}{\partial \nu}-v\,\frac{\partial u}{\partial \nu}\right)\,\mathrm{d}S,
\qquad u,v\in C^{2}(\overline{B}),
\end{equation}
where $\nu$ denotes the outward unit normal vector field on $\partial B$ and
\[
\frac{\partial u}{\partial \nu}(x)= \partial_{\nu}u(x) \coloneqq \langle\nabla u(x), \nu(x)\rangle,
\qquad x\in\partial B.
\]
\begin{lemma}\label{dist property} For every $\varphi \in C_{\mathrm{cl}}^{4}(\overline{B})$ and $x \in B$, it holds
\begin{equation}\label{distribution property}
    \varphi(x)=\int_{B} G(x,y)\Delta^2\varphi(y)\,\mathrm{d}y.
\end{equation}  
\end{lemma}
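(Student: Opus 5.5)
Fix $x\in B$ and $\varphi\in C^4_{\mathrm{cl}}(\overline B)$. The plan is the classical one: excise a small ball $B_\varepsilon\coloneqq B(x,\varepsilon)$, apply Green's identity \eqref{GI} twice on $B\setminus B_\varepsilon$ to move both Laplacians from $\varphi$ onto $G(x,\cdot)$, use $\Delta_y^2G(x,\cdot)=0$ there (Lemma \ref{sucessive}), and then pass to the limit $\varepsilon\to0$. Since $G(x,\cdot)$ is only logarithmically singular (Lemma \ref{estimates}), it is integrable, so $\int_{B\setminus B_\varepsilon}G\,\Delta^2\varphi\to\int_B G\,\Delta^2\varphi$. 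Identity \eqref{GI} is stated on $B$, but it holds verbatim on the smooth annular region $B\setminus\overline{B_\varepsilon}$, where $G(x,\cdot)$ is smooth up to the boundary.

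Concretely, the first application (with $u=G$, $v=\Delta\varphi$) and the second (with $u=\Delta_yG$, $v=\varphi$) give
\begin{equation*}
\int_{B\setminus B_\varepsilon} G\,\Delta^2\varphi\,\mathrm{d}y=\int_{B\setminus B_\varepsilon}\varphi\,\Delta^2_yG\,\mathrm{d}y+\mathcal{B}_{\partial B}+\mathcal{B}_{\partial B_\varepsilon},
\end{equation*}
where each boundary term has the form $\int(G\,\partial_\nu\Delta\varphi-\Delta\varphi\,\partial_\nu G+\Delta G\,\partial_\nu\varphi-\varphi\,\partial_\nu\Delta G)$. The volume term on the right vanishes.

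On $\partial B$ all four terms vanish. We have $\varphi=\partial_\nu\varphi=0$ by assumption and $G(x,\cdot)=0$ by \eqref{Green2}. For $\partial_\nu G$, note that on $|y|=1$ we have $[XY]=|y-x|$. Substituting into the gradient formula of Lemma \ref{sucessive}, the first and fourth terms cancel, as do the second and third, so $\nabla_yG=0$ on $\partial B$.

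On $\partial B_\varepsilon$, with normal pointing toward $x$, I use the bounds of Lemma \ref{estimates} together with $|\partial B_\varepsilon|=2\pi^2\varepsilon^3$:
\begin{itemize}
\item $\int |G|\,|\partial_\nu\Delta\varphi| = O(\varepsilon^3\log(1/\varepsilon))$;
\item $\int|\Delta\varphi|\,|\nabla G| = O(\varepsilon^2)$;
\item $\int|\Delta G|\,|\partial_\nu\varphi| = O(\varepsilon)$.
\end{itemize}
All three tend to $0$. The only surviving term is $-\int_{\partial B_\varepsilon}\varphi\,\partial_\nu\Delta_yG$, which is the main step. From Lemma \ref{sucessive}, $\nabla_y\Delta_yG=\frac{1}{2\pi^2}\frac{y-x}{|y-x|^4}+R(x,y)$, where $R$ is bounded near $y=x$ because $[XY]^2\ge(1-|x|^2)(1-|y|^2)$ by \eqref{Green3} stays away from $0$ for $y$ near the interior point $x$. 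With $\nu=-(y-x)/\varepsilon$, the principal part gives $-\frac{1}{2\pi^2}\varepsilon^{-3}$ on $\partial B_\varepsilon$. Hence
\begin{equation*}
-\int_{\partial B_\varepsilon}\varphi\,\partial_\nu\Delta G\,\mathrm dS=\frac{1}{2\pi^2\varepsilon^3}\int_{\partial B_\varepsilon}\varphi\,\mathrm dS+O(\varepsilon^3)\longrightarrow\varphi(x),
\end{equation*}
by continuity of $\varphi$ and $|\partial B_\varepsilon|=2\pi^2\varepsilon^3$.

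The remaining care is bookkeeping. The signs from the two applications of \eqref{GI} with the inward normal on $\partial B_\varepsilon$ must be tracked consistently, and the normalizing constant $\frac{1}{2\pi^2}$ must match the surface area of $\mathds S^3$. I expect this to be the main obstacle.
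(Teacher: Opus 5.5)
Your proposal is correct and follows essentially the same route as the paper. You excise $B(x,\varepsilon)$, apply \eqref{GI} twice on the annulus, use $\Delta_y^2G=0$ and the vanishing of $G,\partial_\nu G,\varphi,\partial_\nu\varphi$ on $\partial B$, kill three boundary terms on $\partial B(x,\varepsilon)$ with Lemma \ref{estimates}, and recover $\varphi(x)$ from the principal part $\frac{1}{2\pi^2}\frac{y-x}{|y-x|^4}$ of $\nabla_y\Delta_yG$. The differences are cosmetic: you check $\nabla_yG=0$ on $\partial B$ explicitly, which the paper only asserts, and you bound the non-principal part of $\nabla_y\Delta_yG$ in one stroke via $[XY]^2\ge(1-|x|^2)(1-|y|^2)$ from \eqref{Green3}, whereas the paper estimates those three terms individually using $[XY]^2\ge(1-|x|^2)^2/2$.
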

\begin{proof} Let $\varphi \in C_{\mathrm{cl}}^{4}(\overline{B})$. Fix $x\in B$ and let $\varepsilon>0$ be such that $B(x,\varepsilon)\subset \subset B$. We split
$$
\int_{B} G(x,y)\,\Delta^{2}\varphi(y)\,\mathrm{d}y
=\int_{B(x,\varepsilon)} G(x,y)\,\Delta^{2}\varphi(y)\,\mathrm{d}y
+\int_{B\setminus B(x,\varepsilon)} G(x,y)\,\Delta^{2}\varphi(y)\,\mathrm{d}y
\eqqcolon I_{\varepsilon}+J_{\varepsilon}.
$$
By \hyperref[estimates]{Lemma~\ref{estimates}}, the fact that $G(x,y)\geq 0$, and the elementary inequality $\log t\le t$ for all $t>0$, we obtain
\begin{align*}
|I_{\varepsilon}|
&=\left|\int_{B(x,\varepsilon)} G(x,y)\,\Delta^{2}\varphi(y)\,\mathrm{d}y\right|
\le \|\Delta^{2}\varphi\|_{L^{\infty}(B)}\,\frac{1}{8\pi^{2}}
\int_{B(x,\varepsilon)} \log\!\left(\frac{2}{|x-y|}\right)\,\mathrm{d}y\\
&\le \|\Delta^{2}\varphi\|_{L^{\infty}(B)}\,\frac{1}{4\pi^{2}}
\int_{B(x,\varepsilon)} \frac{1}{|x-y|}\,\mathrm{d}y
=\|\Delta^{2}\varphi\|_{L^{\infty}(B)}\,\frac{1}{4\pi^{2}}
\left(\frac{2\pi^{2}}{3}\varepsilon^{3}\right).
\end{align*}

Now, we apply Green's identity \eqref{GI} to $J_{\varepsilon}$; since for any fixed $x\in B$
\[
G(x,y)=\frac{\partial G}{\partial \nu_{y}}(x,y)=0
\quad\forall\,y\in\partial B,
\]
the corresponding boundary integrals over $\partial B$ vanish, and we obtain
\begin{align*}
J_{\varepsilon}
&=\int_{B\setminus B(x,\varepsilon)} G(x,y)\,\Delta^{2}\varphi(y)\,\mathrm{d}y\\
&=\int_{B\setminus B(x,\varepsilon)} \Delta_y G(x,y)\,\Delta\varphi(y)\,\mathrm{d}y
-\int_{\partial B(x,\varepsilon)} G(x,y)\,\frac{\partial(\Delta\varphi)}{\partial \nu}(y)\,\mathrm{d}S_y
+\int_{\partial B(x,\varepsilon)} \Delta\varphi(y)\,\frac{\partial G}{\partial \nu_{y}}(x,y)\,\mathrm{d}S_y\\
&\eqqcolon K_{\varepsilon}-J_{\varepsilon}'+J_{\varepsilon}'' .
\end{align*}
We estimate $J_{\varepsilon}'$ exactly as we estimated $I_{\varepsilon}$ and obtain a constant $C>0$ such that
\[
|J_{\varepsilon}'|
=\left|\int_{\partial B(x,\varepsilon)} G(x,y)\,\frac{\partial(\Delta\varphi)}{\partial \nu}(y)\,\mathrm{d}S_y\right|
\le C\,\varepsilon^{2}.
\]
To estimate $J_{\varepsilon}''$, we apply the gradient estimate in \hyperref[estimates]{Lemma \ref{estimates}} to obtain
\begin{align*}
|J_{\varepsilon}''|
&=\left|\int_{\partial B(x,\varepsilon)} \Delta\varphi(y)\,\frac{\partial G}{\partial \nu_{y}}(x,y)\,\mathrm{d}S_y\right|
=\left|\int_{\partial B(x,\varepsilon)} \Delta\varphi(y)\,\langle\nabla_y G(x,y),\nu(y)\rangle\,\mathrm{d}S_y\right|\\
&\le \|\Delta\varphi\|_{L^{\infty}(B)}\int_{\partial B(x,\varepsilon)} |\nabla_y G(x,y)|\,\mathrm{d}S_y
\leq\|\Delta\varphi\|_{L^{\infty}(B)}\frac{1}{2\pi^2}\int_{\partial B(x,\varepsilon)}\frac{1}{|y-x|}\,\mathrm{d}S_y
= \varepsilon^2\lVert \Delta \varphi \rVert_{L^{\infty}(B)}.
\end{align*}
Next, we apply \eqref{GI} again to $K_{\varepsilon}$; since $\varphi \in C_{\text{cl}}^4(\overline{B})$, the boundary integrals over $\partial B$ vanish, and we obtain
\begin{align*}
    K_{\varepsilon}&=\int_{B\setminus B(x,\varepsilon)} \Delta_y G(x,y)\Delta \varphi(y)\,\mathrm{d}y\\
    &=\int_{B\setminus B(x,\varepsilon)} \Delta^2_y G(x,y)\varphi(y)\,\mathrm{d}y-\int_{\partial B(x,\varepsilon)} \Delta_yG(x,y)\frac{\partial \varphi}{\partial \nu}(y)\,\mathrm{d}S_y+\int_{\partial B(x,\varepsilon)} \varphi(y)\frac{\partial (\Delta_y G)}{\partial \nu}(x,y)\,\mathrm{d}S_y.
    \end{align*}
Moreover, by  \hyperref[sucessive]{Lemma \ref{sucessive}}, we have $\Delta^2_yG(x,y)=0$ for every $y \in B \setminus B(x, \varepsilon)$, and hence
$$
    K_{\varepsilon}=-\int_{\partial B(x,\varepsilon)} \Delta_yG(x,y)\frac{\partial \varphi}{\partial \nu}(y)\,\mathrm{d}S_y+\int_{\partial B(x,\varepsilon)} \varphi(y)\frac{\partial (\Delta_y G)}{\partial \nu}(x,y)\,\mathrm{d}S_y\\
    =-K_{\varepsilon}'+K_{\varepsilon}'',
$$
From \hyperref[estimates]{Lemma \ref{estimates}}, we find
$$
    |K_{\varepsilon}'|=\left|\int_{\partial B(x,\varepsilon)}\Delta_y G(x,y)\frac{\partial \varphi}{\partial \nu}(y)\,\mathrm{d}S_y\right|
    \leq \text{$\frac{3 \lVert \nabla \varphi \rVert_{L^\infty(B)}}{2 \pi^2}$} \int_{\partial B(x,\varepsilon)}\frac{1}{|x-y|^2}\,\mathrm{d}S_y
     \text{=$3 \lVert \nabla \varphi \rVert_{L^\infty(B)}$}\varepsilon.
$$
In order to estimate $|K_{\varepsilon}''|$, we apply \hyperref[sucessive]{Lemma \ref{sucessive}}, use that $\nu(y)=(y-x)/|y-x|$, and write
\begin{align*}
    K_{\varepsilon}''&=\int_{\partial B(x,\varepsilon)}\varphi(y)\frac{\partial(\Delta_yG)}{\partial \nu_{y}}(x,y)\,\mathrm{d}S_y
    =\int_{\partial B(x,\varepsilon)} \varphi(y) \langle\nabla_y \Delta_y G(x,y), \nu(y)\rangle\,\mathrm{d}S_y\\
    &=\frac{1}{2\pi^2\varepsilon^3}\int_{\partial B(x,\varepsilon)}\varphi(y)\,\mathrm{d}S_y-\frac{1}{2\pi^2\varepsilon}\int_{\partial B(x,\varepsilon)}\varphi(y)\frac{|x|^2+3}{[XY]^4}\langle a(x,y),y-x\rangle\,\mathrm{d}S_y\\
    &\hspace{0.1cm}-\frac{\varepsilon}{2\pi^2}\int_{\partial B(x,\varepsilon)}\varphi(y)\frac{|x|^2}{[XY]^4}\,\mathrm{d}S_y+\frac{2}{\pi^2\varepsilon}\int_{\partial B(x,\varepsilon)}\varphi(y)\frac{\langle a(x,y),y-x\rangle^2}{[XY]^6}\mathrm{d}S_y\\
    &=K_{\varepsilon,1}''-K_{\varepsilon,2}''-K_{\varepsilon,3}''+K_{\varepsilon,4}''.
\end{align*}
We have
$$
    K_{\varepsilon,1}''=\frac{1}{2\pi^2\varepsilon^3}\int_{\partial B(x,\varepsilon)} \varphi(y)\,\mathrm{d}S_y
    =\frac{1}{|\partial B(x,\varepsilon)|}\int_{\partial B(x,\varepsilon)} \varphi(y)\,\mathrm{d}S_y\to \varphi(x)\quad \text{as}\quad \varepsilon \to 0^{+},
$$
and it remains to analyze the other integrals $K_{\varepsilon,2}''$, $K_{\varepsilon,3}''$, and $K_{\varepsilon,4}''$. To this aim, for every $y \in \partial B(x, \varepsilon)$, we write $y=x+h$ with $|h|=\varepsilon$, then
\begin{align*}
[XY]^2
&=|x|^{2}|x+h|^{2}-2\langle x,x+h\rangle+1
=(1-|x|^{2})^{2}+2\langle x,h\rangle(|x|^{2}-1)+|x|^{2}\varepsilon^{2}\\
&\ge (1-|x|^{2})^{2}-2(1-|x|^{2})\langle x,h\rangle
\ge c_{x}-2\sqrt{c_{x}}|x|\,\varepsilon,
\end{align*}
where $c_{x}\coloneqq (1-|x|^{2})^{2}$. If $x\neq 0$, we choose $\varepsilon>0$ such that
\[
0<\varepsilon<\varepsilon_{0}(x)\coloneqq \frac{c_{x}}{4\sqrt{c_{x}}|x|}
=\frac{1-|x|^{2}}{4|x|}.
\]
In this way
\[
[XY]^2\ge c_{x}-\frac{c_{x}}{2}=\frac{c_{x}}{2}.
\]
Observe that if $x=0$, then $[XY]^2=1$, and the same bound $[XY]^2\ge c_{x}/2$ holds as well. Consequently, for every $\varepsilon\le \varepsilon_{0}$ and every $y \in \partial B(x, \varepsilon)$,
\[
\frac{1}{[XY]^2}\le \frac{2}{c_{x}}=\frac{2}{(1-|x|^{2})^{2}}.
\]
From the inequality above together with $|a(x,y)|=|x|[XY] \leq [XY]$, we can estimate the remaining integrals. Indeed, for every $\varepsilon<\varepsilon_0$, we have
\begin{align*}
    |K_{\varepsilon,2}''|&\leq\frac{1}{2\pi^2\varepsilon}\int_{\partial B(x,\varepsilon)}\left|\varphi(y)\right|\frac{|x|^2+3}{[XY]^4}|\langle a(x,y),y-x \rangle|\,\mathrm{d}S_y\\
    &\leq \frac{4}{2\pi^2\varepsilon} \lVert \varphi \rVert_{L^{\infty}(B)} \left(\frac{2}{c_x} \right)^{3/2}\int_{\partial B(x,\varepsilon)}|y-x|\,\mathrm{d}S_y
    =4\varepsilon^3\lVert \varphi\rVert_{L^{\infty}(B)}\left(\frac{2}{c_x} \right)^{3/2},
\end{align*}
moreover
$$
    |K_{\varepsilon,3}''|\leq \frac{\varepsilon}{2\pi^2}\int_{\partial B(x, \varepsilon)} |\varphi(y)|\frac{|x|^2}{[XY]^4}\,\mathrm{d}S_y
    \leq \frac{\varepsilon}{2\pi^2}\lVert \varphi \rVert_{L^{\infty}(B)}\frac{4}{c_x^2}2\pi^2\varepsilon^3
    =\frac{4\lVert \varphi \rVert_{L^{\infty}(B)}}{c_x^2}\varepsilon^4,
$$
and
\begin{align*}
    |K_{\varepsilon,4}''|&\leq \frac{2}{\pi^2\varepsilon}\int_{\partial B(x,\varepsilon)}|\varphi(y)|\frac{|\langle a(x,y),y-x\rangle|^2}{[XY]^{6}}\,\mathrm{d}S_y
    \leq \frac{2}{\pi^2\varepsilon}\lVert \varphi \rVert_{L^{\infty}(B)}\int_{\partial B(x,\varepsilon)}\frac{[XY]^2|y-x|^2}{[XY]^6}\,\mathrm{d}S_y\\
    &\leq \frac{2}{\pi^2\varepsilon}\lVert \varphi \rVert_{L^{\infty}(B)}\frac{4}{c_x^2} \int_{\partial B(x,\varepsilon)}|y-x|^2\,\mathrm{d}S_y
    =\frac{16\lVert \varphi \rVert_{L^{\infty}(B)}}{c_x^2}\varepsilon^{4}.
\end{align*}
Thus, 
\begin{equation*}
    \lim_{\varepsilon\to0^{+}}K_{\varepsilon}''=\lim_{\varepsilon\to0^{+}}(K_{\varepsilon,1}''-K_{\varepsilon,2}''-K_{\varepsilon,3}''+K_{\varepsilon,4}'')=\varphi(x).
\end{equation*}

Finally, recalling that $K_{\varepsilon}'$, $J_{\varepsilon}'$, $J_{\varepsilon}''$, $I_{\varepsilon}$ all converge to $0$ as $\varepsilon \to 0^{+}$, we conclude that 
\begin{align*}
    \int_{B}G(x,y)\Delta^2\varphi(y)\,\mathrm{d}y&=\lim_{\varepsilon \to 0^{+}} (I_{\varepsilon}+J_{\varepsilon})
    =\lim_{\varepsilon \to 0^{+}}(I_{\varepsilon}+K_{\varepsilon}-J_{\varepsilon}'+J_{\varepsilon}'')\\
    &=\lim_{\varepsilon \to 0^{+}}(I_{\varepsilon}-K_{\varepsilon}'+K_{\varepsilon}''-J_{\varepsilon}'+J_{\varepsilon}'')
    =\varphi(x),
\end{align*}
and the lemma is proved.
\end{proof}

The preceding distributional identity allows us to construct solutions of the linear problem \eqref{P} with $L^1$ data through the Green representation formula. We record this fact in the following lemma.

\begin{lemma}\label{solution} If $f \in L^1(B)$, then there exists a unique very weak solution $u$ of \eqref{P} in the sense \eqref{very weak solution}, and the following representation formula holds
\begin{equation}\label{Green representation}
    u(x)=\int_{B} G(x,y)f(y)\,\mathrm{d}y.
\end{equation}
\end{lemma}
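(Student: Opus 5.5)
Existence and uniqueness are handled separately. For existence, I define $u$ by the representation formula \eqref{Green representation} and check three things: the formula makes sense, $u\in L^1(B)$, and $u$ satisfies \eqref{very weak solution}.

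\textbf{Integrability.} By Lemma~\ref{estimates} and $G\ge 0$, we have $0\le G(x,y)\le \frac{1}{8\pi^2}\log\frac{2}{|x-y|}$. Since $|x-y|\le 2$, the function $y\mapsto \log\frac{2}{|x-y|}$ is nonnegative. Its integral over $x\in B$ is bounded by a constant $M$ independent of $y$: enlarge the domain to $B(y,2)$ and integrate in polar coordinates. By Tonelli,
\[
\int_B\int_B G(x,y)|f(y)|\,\mathrm{d}y\,\mathrm{d}x\le \frac{M}{8\pi^2}\|f\|_{L^1(B)}<\infty .
\]
Hence the integral in \eqref{Green representation} converges absolutely for a.e.\ $x$, and $u\in L^1(B)$.

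\textbf{Very weak formulation.} Fix $\varphi\in C^4_{\mathrm{cl}}(\overline B)$. The function $G(x,y)f(y)\Delta^2\varphi(x)$ is dominated by $\|\Delta^2\varphi\|_\infty G(x,y)|f(y)|$, which is integrable by the previous step. Fubini therefore gives
\[
\int_B u\,\Delta^2\varphi\,\mathrm{d}x=\int_B f(y)\Big(\int_B G(x,y)\Delta^2\varphi(x)\,\mathrm{d}x\Big)\mathrm{d}y .
\]
By the symmetry $G(x,y)=G(y,x)$, the inner integral equals $\int_B G(y,x)\Delta^2\varphi(x)\,\mathrm{d}x$, which is $\varphi(y)$ by Lemma~\ref{dist property}. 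Symmetry follows from \eqref{Green2}: both $|x-y|$ and $[XY]^2=|x|^2|y|^2-2\langle x,y\rangle+1$ are symmetric in $x,y$. This gives \eqref{very weak solution}.

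\textbf{Uniqueness.} This is the main obstacle. If $u_1,u_2$ are two very weak solutions, then $w=u_1-u_2\in L^1(B)$ satisfies $\int_B w\,\Delta^2\varphi=0$ for all $\varphi\in C^4_{\mathrm{cl}}(\overline B)$. I want to conclude $w=0$ a.e., and for that it suffices to show that $\{\Delta^2\varphi:\varphi\in C^4_{\mathrm{cl}}(\overline B)\}$ contains $C_c^\infty(B)$.

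Given $\psi\in C_c^\infty(B)$, set $\varphi(x)=\int_B G(x,y)\psi(y)\,\mathrm{d}y$. This $\varphi$ solves $\Delta^2\varphi=\psi$ with $\varphi=\partial_\nu\varphi=0$ on $\partial B$. It lies in $C^\infty(\overline B)$, by the classical regularity of the Dirichlet biharmonic problem with smooth data in the smooth domain $B$, which can be cited from \cite{Gazzola}. Equivalently, one can invoke Lax--Milgram in $H^2_0(B)$ followed by elliptic regularity up to the boundary; by uniqueness in $H^2_0$ this weak solution coincides with the Green representation. Either way, $\varphi\in C^4_{\mathrm{cl}}(\overline B)$ and $\Delta^2\varphi=\psi$.

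Then $\int_B w\psi=0$ for every $\psi\in C_c^\infty(B)$, so $w=0$ a.e. This proves uniqueness and, together with the existence step, identifies the unique very weak solution with \eqref{Green representation}.
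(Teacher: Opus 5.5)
Your proof is correct and follows the paper's route. You define $u$ by the Green formula, get $u\in L^1(B)$ via Tonelli and the logarithmic bound on $G$ (the paper uses $\log(2t)\le t$ and Hardy--Littlewood where you use a polar-coordinate bound), verify the identity by Fubini and Lemma~\ref{dist property}, and prove uniqueness by the Brezis--Cazenave duality argument, which you spell out by solving $\Delta^2\varphi=\psi$ for $\psi\in C_c^\infty(B)$. Your explicit use of the symmetry $G(x,y)=G(y,x)$ also fills a step the paper leaves tacit when it applies Lemma~\ref{dist property} with integration in the first variable.
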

\begin{proof} Define $u$ as in \eqref{Green representation}. Our first objective is to prove that $u \in L^1(B)$. To achieve this goal, observe that by  the classical Hardy-Littlewood rearrangement inequality (see \cite[Theorem 3.4]{Lieb}), we have
\begin{equation} \label{eq:Hardy-Littlewood}
\int_{B} \frac{1}{|x-y|}\,\mathrm{d}x \leq \int_{B} \frac{1}{|x|}\,\mathrm{d}x, \quad  \forall y \in B.
\end{equation}
Combining \hyperref[estimates]{Lemma \ref{estimates}}, the elementary inequality $\log(2t) \leq t$ for all $t>0$, and \eqref{eq:Hardy-Littlewood}, we get
\begin{align} 
    \int_{B} \int_{B} G(x,y) |f(y)| &\,\mathrm{d}x\,\mathrm{d}y  \leq \frac{1}{8\pi^2}\int_{B}|f(y)|\log\left(\frac{2}{|x-y|}\right)\,\mathrm{d}x\,\mathrm{d}y
    \leq \frac{1}{8\pi^2}\int_{B} \int_{B}\frac{|f(y)|}{|x-y|}\,\mathrm{d}x\,\mathrm{d}y\\ \label{RepFormulaFT}
    &\leq \frac{1}{8\pi^2} \int_{B} |f(y)| \left(\int_{B}\frac{1}{|x|}\,\mathrm{d}x \right)\,\mathrm{d}y
    =\frac{1}{8\pi^2} \int_{B} |f(y)| \left( 2 \pi^2\int_{0}^{1}  s^2\,\mathrm{d}s \right)\,\mathrm{d}y\\
    &= \frac{1}{8\pi^2}\frac{2\pi^2}{3}\lVert f \rVert_{L^1(B)} <+\infty.
\end{align}
Thus, by the  Fubini--Tonelli Theorem, it follows that $u \in L^1(B)$.

Next, we show that $u$ is a very weak solution of \eqref{P}, namely 
\begin{equation*}
    \int_{B}u\Delta^2\varphi\,\mathrm{d}x=\int_{B} f\varphi\,\mathrm{d}x,\quad \forall\varphi \in C_{\mathrm{cl}}^{4}(\overline{B}),
\end{equation*}
or explicitly
\begin{equation*}
\int_{B}\int_{B}G(x,y)f(y)\Delta^2\varphi(x)\,\mathrm{d}y\mathrm{d}x=\int_{B}f\varphi\,\mathrm{d}x,\quad \forall\varphi \in C_{\mathrm{cl}}^{4}(\overline{B}).
\end{equation*}
Let $\varphi \in C_{\mathrm{cl}}^{4}(\overline{B})$. Arguing as in \eqref{RepFormulaFT}, we get
$$
    \int_{B} \int_{B} G(x,y) |f(y)|\,|\Delta^2 \varphi(x)|\,\mathrm{d}x\,\mathrm{d}y \leq \lVert\Delta^2 \varphi\lVert_{L^{\infty}(B)}\int_{B} \int_{B} G(x,y) |f(y)|\,\,\mathrm{d}x\,\mathrm{d}y <+\infty
$$
Thus, again by Fubini-Tonelli Theorem and \hyperref[dist property]{Lemma \ref{dist property}}, we conclude that

\begin{align*}
    \int_{B} \int_{B} & G(x,y)  f(y) \Delta^2 \varphi(x)\,\mathrm{d}y\,\mathrm{d}x=\int_{B} \int_{B} G(x,y)  f(y) \Delta^2 \varphi(x)\,\mathrm{d}x\,\mathrm{d}y\\
    &= \int_{B}  f(y)  \int_{B} G(x,y) \Delta^2 \varphi(x)\,\mathrm{d}x\,\mathrm{d}y
    =\int_{B} f (y)  \varphi(y)\,\mathrm{d}y
    =\int_{B} f(x)\varphi(x)\,\mathrm{d}x.
\end{align*}

Finally, to prove uniqueness, suppose that there exists another solution $v \in L^1(B)$ of \eqref{P}. This implies that
\begin{equation*}
    \int_{B} (u-v)\Delta^2\varphi\,\mathrm{d}x=0,\quad \forall \varphi \in C_{\mathrm{cl}}^4(\overline{B}).
    \end{equation*}
Proceeding as in the beginning of the proof of  \cite[Lemma 1]{BrezisCazenave}, we conclude that $u=v$.
\end{proof}

\section{Proof of the Brezis-Merle type inequality}\label{BM proof sec 3}

This section is devoted to the proof of {\hyperref[B-M]{Theorem \ref{B-M}}} and some of its direct consequences.

\begin{proof}[Proof of {\hyperref[B-M]{Theorem \ref{B-M}}}]By \hyperref[solution]{Lemma \ref{solution}}, the unique solution of \eqref{P} in the sense \eqref{very weak solution} is given by
\begin{equation*}
    u(x)=\int_{B}G(x,y)f(y)\,\mathrm{d}y.
\end{equation*}
Let $C>0$. By \hyperref[estimates]{Lemma \ref{estimates}}, we have
\begin{align*}
    \int_{B}\exp\left[\frac{C|u(x)|}{\lVert f\rVert_{L^1(B)}}\right]\,\mathrm{d}x&\leq \int_{B} \exp\left[\frac{C}{\lVert f\rVert_{L^1(B)}}\int_{B}|G(x,y)||f(y)|\,\mathrm{d}y\right]\mathrm{d}x\\
    &\leq \int_{B} \exp\left[\frac{C}{8\pi^2\lVert f\rVert_{L^1(B)}}\int_{B}\log\left(\frac{2}{|x-y|}\right)|f(y)|\,\mathrm{d}y\right]\mathrm{d}x\\
    &=\int_{B}\exp\left[\int_{B}\log\left(\frac{2}{|x-y|}\right)^{\frac{C}{8\pi^2}}\,\mathrm{d}\mu\right]\,\mathrm{d}x,
\end{align*}
where $\mu$ is the probability measure in $B$ defined by $mathrm{d}\mu=\frac{|f(y)|}{\lVert f\rVert_{L^{1}(B)}}\,\mathrm{d}y$. Since $\exp$ is a convex function in $\mathds{R}$, we can apply Jensen's inequality in order to obtain
\begin{align*}
    \int_{B}\exp\left[\frac{C|u(x)|}{\lVert f\rVert_{L^1(B)}}\right]\,\mathrm{d}x&\leq\int_{B}\exp\left[\int_{B}\log\left(\frac{2}{|x-y|}\right)^{\frac{C}{8\pi^2}}\,\mathrm{d}\mu\right]\,\mathrm{d}x
    \leq \int_{B}\int_{B}\left(\frac{2}{|x-y|}\right)^{\frac{C}{8\pi^2}}\frac{|f(y)|}{\lVert f\rVert_{L^1(B)}}\,\mathrm{d}y\,\mathrm{d}x.
\end{align*}
Now with the objective of applying the Fubini-Tonelli Theorem, we use the classical Hardy-Littlewood rearrangement  inequality (see \cite[Theorem 3.4]{Lieb}) and, similarly to \eqref{eq:Hardy-Littlewood}, we can estimate
\begin{align*}
    \int_{B}  \int_{B} &\left(\frac{2}{|x-y|}\right)^{\frac{C}{8\pi^2}}\frac{|f(y)|}{\lVert f \rVert_{L^1(B)}}\,\mathrm{d}x\,\mathrm{d}y = 2^{\frac{C}{8\pi^2}} \int_{B} \frac{|f(y)|}{\lVert f \rVert_{L^1(B)}} \left( \int_{B}\frac{1}{|x-y|^{\frac{C}{8\pi^2}}}\,\mathrm{d}x \right)\,\mathrm{d}y\\
    &\leq 2^{\frac{C}{8\pi^2}} \int_{B} \frac{|f(y)|}{\lVert f \rVert_{L^1(B)}} \left(\int_{B}\frac{1}{|x|^{\frac{C}{8\pi^2}}}\,\mathrm{d}x \right)\,\mathrm{d}y
    = 2^{\frac{C}{8\pi^2}}\int_{B} \frac{|f(y)|}{\lVert f \rVert_{L^1(B)}} \left(2\pi^2\int_{0}^{1}s^{3-\frac{C}{8\pi^2}}\,\mathrm{d}s \right)\,\mathrm{d}y.\\
\end{align*}
We can now choose $C>0$ such that $3-C/(8\pi^2)>-1$, that is $0<C<32\pi^2$,
and this is enough to conclude that
$$
    \int_{B} \exp\left[\frac{C\lvert u(x) \rvert}{\lVert f \rVert_{L^1(B)}}\right]\,\mathrm{d}x 
    \leq  2^{\frac{C}{8\pi^2}}\frac{2\pi^2}{4-\frac{C}{8\pi^2}}=2^{\frac{C}{8\pi^2}}\frac{16\pi^4}{32\pi^2-C}.
$$
Letting $\delta=32\pi^2-C$, the result follows, indeed
\begin{equation*}
    \frac{C}{8\pi^2}=4-\frac{\delta}{8\pi^2}, \quad 0<4-\frac{\delta}{8\pi^2}<4 \quad \text{and}\quad 2^{\frac{C}{8\pi^2}}\frac{16\pi^4}{32\pi^2-C} < \frac{2^8 \pi^4}{\delta}.
\end{equation*}
\end{proof}

\begin{remark} \label{rmk:BMsharp}
The constant $32\pi^2$ in \hyperref[B-M]{Theorem \ref{B-M}} is sharp. To see this, fix $p\in B$ and consider a family of nonnegative functions $f_{\varepsilon}\in L^1(B)$ such that $\lVert f_\varepsilon\rVert_{L^1(B)}=1$ and $f_{\varepsilon}\to \delta_p$ as $\varepsilon\to 0^+$. If $u_\varepsilon$ is the corresponding very weak solution (see \hyperref[solution]{Lemma \ref{solution}}), then, away from $p$, $u_\varepsilon$ converges to the Green function $G(\cdot,p)$. Since, by \eqref{Green2},
\begin{equation*}
G(x,p)=\frac{1}{8\pi^2}\log\frac{1}{|x-p|}+O(1)
\quad\text{as } x\to p,
\end{equation*}
we have, formally,
\begin{equation*}
\exp(\alpha G(x,p))\sim |x-p|^{-\frac{\alpha}{8\pi^2}}.
\end{equation*}
This function is locally integrable in dimension four if and only if $\alpha/(8\pi^2)<4$, that is, if and only if $\alpha<32\pi^2$.
\end{remark}

As a direct consequence of \hyperref[B-M]{Theorem \ref{B-M}}, the
Brezis-Merle inequality can be scaled to arbitrary balls by means of a change of variables.
\begin{corollary}\label{B-M general ball}
Let $x_0\in\mathds{R}^4$ and $R>0$. If $f \in L^1(B(x_0,R))$ and $u \in L^1(B(x_0,R))$ is a very weak solution of
\begin{equation}
\left\{\begin{array}{rcl}
\Delta^2 u  =  f(x) \ \ &\text{in} \ \ B(x_0,R),\\
u  =  \partial_{\nu}u = 0\ \ &\text{on} \ \ \partial B(x_0,R),\\
\end{array}\right.
\end{equation}
then for every $\delta \in (0,32 \pi^2)$ we have
\begin{equation}\label{BM-ball-R}
\int_{B(x_0,R)} 
\exp\!\left(\frac{(32\pi^2-\delta)\,|u(x)|}{\|f\|_{L^1(B(x_0,R))}}\right)\,\mathrm{d}x
\le \frac{256\pi^4}{\delta}R^4.
\end{equation}
\end{corollary}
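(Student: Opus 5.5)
The plan is to reduce Corollary \ref{B-M general ball} to Theorem \ref{B-M} by the affine change of variables $x=x_0+Ry$, $y\in B$. Given $u,f$ on $B(x_0,R)$, define on the unit ball
\[
v(y)\coloneqq u(x_0+Ry),\qquad g(y)\coloneqq R^4 f(x_0+Ry).
\]
The power $R^4$ is forced by $\Delta_y^2[\psi(x_0+Ry)]=R^4(\Delta^2\psi)(x_0+Ry)$, and it is exactly the power that makes the $L^1$ norm invariant: since $\mathrm{d}x=R^4\,\mathrm{d}y$,
\[
\|g\|_{L^1(B)}=\int_B R^4|f(x_0+Ry)|\,\mathrm{d}y=\|f\|_{L^1(B(x_0,R))}.
\]
Similarly $v\in L^1(B)$, since $\|v\|_{L^1(B)}=R^{-4}\|u\|_{L^1(B(x_0,R))}$.

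The first step is to check that $v$ is a very weak solution of \eqref{P} with datum $g$. Take $\varphi\in C^4_{\mathrm{cl}}(\overline B)$ and set $\psi(x)\coloneqq\varphi((x-x_0)/R)$. Then $\psi\in C^4(\overline{B(x_0,R)})$, and the boundary conditions transfer: $\psi=0$ on $\partial B(x_0,R)$, and $\partial_\nu\psi=R^{-1}\partial_\nu\varphi=0$, because the outer normals correspond under the dilation. So $\psi$ is an admissible test function on $B(x_0,R)$, with
\[
\Delta^2\psi(x)=R^{-4}(\Delta^2\varphi)\bigl((x-x_0)/R\bigr).
\]
Substituting $x=x_0+Ry$ into the very weak formulation for $u$ gives
\[
\int_B v\,\Delta^2\varphi\,R^{-4}R^4\,\mathrm{d}y=\int_B f(x_0+Ry)\varphi(y)R^4\,\mathrm{d}y,
\]
that is, $\int_B v\,\Delta^2\varphi\,\mathrm{d}y=\int_B g\,\varphi\,\mathrm{d}y$. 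This is the only place where any care is needed. One must make sure the test class on $B(x_0,R)$ is the natural analogue of \eqref{Dirichlet test space}, and that the map $\varphi\mapsto\psi$ is onto that class, which holds because the inverse dilation sends one class to the other. The step is routine bookkeeping rather than a real obstacle.

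The second step applies Theorem \ref{B-M} to $v$ and $g$:
\[
\int_B\exp\!\Bigl[\tfrac{(32\pi^2-\delta)|v(y)|}{\|g\|_{L^1(B)}}\Bigr]\mathrm{d}y\le\frac{256\pi^4}{\delta}.
\]
Finally, change variables back with $\mathrm{d}x=R^4\,\mathrm{d}y$. Using $|v(y)|=|u(x)|$ and $\|g\|_{L^1(B)}=\|f\|_{L^1(B(x_0,R))}$, the left-hand side equals $R^{-4}$ times the integral in \eqref{BM-ball-R}. Multiplying through by $R^4$ gives \eqref{BM-ball-R}.
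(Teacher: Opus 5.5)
Your proposal is correct and is the same route the paper takes: the paper presents the corollary as a direct consequence of Theorem \ref{B-M} via a change of variables, and your rescaling $x=x_0+Ry$, $v(y)=u(x_0+Ry)$, $g(y)=R^4f(x_0+Ry)$ supplies exactly the omitted bookkeeping. It preserves the $L^1$ norm and the very weak formulation, and the Jacobian factor $R^4$ then produces the right-hand side of \eqref{BM-ball-R}. Note that surjectivity of $\varphi\mapsto\varphi((\cdot-x_0)/R)$ is not needed; it suffices that this map sends $C^4_{\mathrm{cl}}(\overline B)$ into the admissible test class on $B(x_0,R)$.
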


The next corollary shows that every very weak solution of the linear problem \eqref{P} enjoys exponential integrability of arbitrary finite order.

\begin{corollary}\label{integrability} If $u$ is a very weak solution of \eqref{P}, then $e^{k|u|} \in L^1(B)$ for every $k>0$.
\end{corollary}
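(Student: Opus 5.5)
The plan is to imitate the classical Brezis--Merle trick: split $f$ into a part with small $L^1$ norm and a bounded part. Fix $k>0$ and let $u$ be a very weak solution of \eqref{P} with $f\in L^1(B)$. By Lemma~\ref{solution}, $u$ is the unique very weak solution and is given by the Green representation \eqref{Green representation}. Choose $M>0$ so large that, with
\[
f_1 \coloneqq f\,\mathds{1}_{\{|f|>M\}}, \qquad f_2 \coloneqq f\,\mathds{1}_{\{|f|\le M\}},
\]
we have $\|f_1\|_{L^1(B)}<\varepsilon$. This is possible by absolute continuity of the integral, and $\varepsilon>0$ will be fixed below in terms of $k$. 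Let $u_1,u_2$ be the very weak solutions associated with $f_1,f_2$ via Lemma~\ref{solution}. By linearity of \eqref{very weak solution} and uniqueness, $u=u_1+u_2$; alternatively, this follows directly from linearity of the representation formula.

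\textbf{Bounded part.} Since $|f_2|\le M$, Lemma~\ref{estimates} and the Hardy--Littlewood inequality \eqref{eq:Hardy-Littlewood} (in the form already used in \eqref{RepFormulaFT}) give, uniformly in $x\in B$,
\[
|u_2(x)| \le \frac{M}{8\pi^2}\int_B \log\frac{2}{|x-y|}\,\mathrm{d}y \le \frac{M}{8\pi^2}\int_B \frac{1}{|x-y|}\,\mathrm{d}y \le \frac{M}{8\pi^2}\cdot\frac{2\pi^2}{3}.
\]
Hence $u_2\in L^\infty(B)$ and $e^{k|u_2|}$ is bounded.

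\textbf{Small part.} If $f_1=0$ there is nothing to prove. Otherwise, apply Theorem~\ref{B-M} to $u_1$ with $\delta=16\pi^2$:
\[
\int_B \exp\!\Big(\frac{16\pi^2|u_1|}{\|f_1\|_{L^1(B)}}\Big)\,\mathrm{d}x \le 16\pi^2 .
\]
Choosing $\varepsilon = 16\pi^2/k$, we get $k \le 16\pi^2/\|f_1\|_{L^1(B)}$, so $e^{k|u_1|}\in L^1(B)$.

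\textbf{Conclusion.} Since $|u|\le|u_1|+|u_2|$,
\[
e^{k|u|} \le e^{k\|u_2\|_{L^\infty(B)}}\,e^{k|u_1|}\in L^1(B).
\]

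The only step needing care is the identification $u=u_1+u_2$. This rests on uniqueness in Lemma~\ref{solution}, or directly on the linearity of the representation \eqref{Green representation}, so I expect no real obstacle. The argument is routine once the splitting is chosen.
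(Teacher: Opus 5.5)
Your proof is correct and follows essentially the same route as the paper. You split $f$ into a part with small $L^1$ norm, handled by Theorem~\ref{B-M}, and a bounded part, whose solution is bounded uniformly via Lemma~\ref{estimates} and \eqref{eq:Hardy-Littlewood}, and you identify $u=u_1+u_2$ by uniqueness; the only differences are cosmetic (truncation $f\mathds{1}_{\{|f|>M\}}$ instead of approximation by a simple function, and $\delta=16\pi^2$ instead of $\delta=32\pi^2-1$).
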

\begin{proof} Let $u$ be a very weak solution of \eqref{P} with $f \in L^1(B)$. Fix $k>0$ and let $\varepsilon>0$ be such that $0<\varepsilon<1/k$.
Let $\varphi$ be a simple function such that
\begin{equation*}
    \lVert f-\varphi\rVert_{L^1(B)}<\varepsilon.
\end{equation*}
We observe that $f=f_1+f_2$, where $f_1\coloneq f-\varphi$ and $f_2\coloneqq \varphi$; in particular, $\lVert f_1\lVert_{L^1(B)}<\varepsilon$ and $\lVert f_2\rVert_{L^{\infty}(B)}<+\infty$. By \hyperref[solution]{Lemma \ref{solution}}, there exist unique solutions to the following problems with $i=1,2$:
\begin{equation} \label{eq:LP12}
\left\{\begin{array}{rcl}
\Delta^2 u_i  =  f_i \ \ &\text{in} \ \ B,\\
u_i  =  \partial_{\nu}u_i = 0\ \ &\text{on} \ \ \partial B.\\
\end{array}\right.
\end{equation}
By the uniqueness of solutions for $\eqref{P}$, we deduce that $u=u_1+u_2$. Since $u_1$ is a very weak solution of \eqref{eq:LP12} with $i=1$, we can apply the Brezis-Merle inequality with $\delta=32\pi^2-1$ and obtain that
\begin{equation*}
    \int_{B}e^{k|u_1(x)|}\,\mathrm{d}x\leq \int_{B}e^{\frac{|u_1(x)|}{\varepsilon}}\,\mathrm{d}x\leq \int_{B}e^{\frac{|u_1(x)|}{\lVert f_1\rVert_{L^1(B)}}}\,\mathrm{d}x<+\infty.
\end{equation*}
Now, since $f_2 \in L^{\infty}(B)$, it follows from \hyperref[estimates]{Lemma \ref{estimates}}, the elementary inequality $\log(t) \leq t$ for all $t>0$, and \eqref{eq:Hardy-Littlewood} that
\begin{align*}
    |u_2(x)|&\leq \int_{B} |G(x,y)||f_2(y)|\,\mathrm{d}y
    \leq \frac{\lVert f_2\rVert_{L^{\infty}(B)}}{4\pi^2}\int_{B} \frac{1}{|x-y|}\,\mathrm{d}y
    \leq \frac{\lVert f_2\rVert_{L^{\infty}(B)}}{4\pi^2} \int_{B}\frac{1}{|y|}\,\mathrm{d}y
    =\frac{\lVert f_2\rVert_{L^{\infty}(B)}}{6},
\end{align*}
that is, $u_2 \in L^{\infty}(B)$. Finally,
$$
    \int_{B} e^{k|u(x)|}\,\mathrm{d}x\leq \int_{B} e^{k|u_1|+k|u_2|}\,\mathrm{d}x
    \leq e^{k\frac{\lVert f_2 \rVert_{L^{\infty}(B)}}{6}}\int_{B}e^{k|u_1(x)|}\,\mathrm{d}x<+\infty.
$$
\end{proof}
\section{Regularity and $L^1$ bound on the nonlinearity  in the scalar case}\label{app sec 4}

In this section, we consider the scalar problem \eqref{App} assuming that the nonlinearity $f: \overline{B} \times \mathds{R}^{+} \to \mathds{R}^{+}$ is continuous, and satisfies the following general growth condition:
\begin{flalign}\label{f0}
     \exists \, c,p>0 \text{ such that } f(x,t)\leq ce^{pt}, \quad \forall x\in B, \; t \geq 0.
\end{flalign}
The growth condition \eqref{f0} is motivated by the Brezis-Merle inequality and its consequences, which we will apply extensively throughout this section. We emphasize that the two relevant growth scenarios, \eqref{f1'} and \eqref{f1}, are both encompassed by the general exponential bound \eqref{f0}.

\subsection{Regularity of very weak solutions}\label{subsec reg}
Our first result concerns the regularity of solutions of \eqref{App} and is a direct application of \hyperref[integrability]{Corollary \ref{integrability}}.
\begin{theorem}\label{regularity1} Assume that $\eqref{f0}$ holds. Then every very weak solution $u$ of \eqref{App} satisfies $u \in C^{3,\gamma}(\overline{B}) \cap L^{\infty}(B)$ for some $0<\gamma<1$.
\end{theorem}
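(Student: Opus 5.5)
Let $u$ be a very weak solution of \eqref{App}, so that $u\in L^1(B)$, $u>0$, and $g(x)\coloneqq f(x,u(x))\in L^1(B)$. Then $u$ is a very weak solution of the linear problem \eqref{P} with datum $g$. By the uniqueness part of Lemma \ref{solution}, $u$ coincides with the Green representation $u(x)=\int_B G(x,y)g(y)\,\mathrm{d}y$. The plan is to raise the integrability of $g$ step by step, using exponential integrability first and then elliptic regularity.

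\emph{Step 1: $g\in L^q(B)$ for every $q<\infty$.} By Corollary \ref{integrability}, $e^{k|u|}\in L^1(B)$ for every $k>0$. Assumption \eqref{f0} gives $0\le g\le c\,e^{pu}$, hence
\[
\int_B g^q \le c^q\int_B e^{pq u}<\infty
\]
for all $q\ge1$.

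\emph{Step 2: $u\in L^\infty(B)$.} Since $\log(2/|x-y|)\in L^{q'}(B)$ in $y$ uniformly in $x$ (by the rearrangement argument \eqref{eq:Hardy-Littlewood}), Lemma \ref{estimates} and H\"older's inequality give
\[
|u(x)|\le \frac{1}{8\pi^2}\Bigl\|\log\tfrac{2}{|x-\cdot|}\Bigr\|_{L^{q'}(B)}\|g\|_{L^q(B)}\le C,
\]
so $u$ is bounded. Then $g=f(\cdot,u)$ is bounded, because $f$ is continuous on the compact set $\overline B\times[0,\|u\|_\infty]$.

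\emph{Step 3: $u\in C^{3,\gamma}(\overline B)$.} Now $g\in L^\infty(B)\subset L^q(B)$ for every $q<\infty$. Let $w\in W^{4,q}(B)\cap W^{2,q}_0(B)$ be the strong solution of $\Delta^2 w=g$ with $w=\partial_\nu w=0$ on $\partial B$, given by Agmon--Douglis--Nirenberg $L^q$ theory (see \cite{Gazzola}). Integrating by parts against $\varphi\in C^4_{\mathrm{cl}}(\overline B)$, which is legitimate for $W^{4,q}$ functions by density since both $w$ and $\varphi$ satisfy the Dirichlet conditions, shows that $w$ is also a very weak solution. By uniqueness (Lemma \ref{solution}), $u=w$. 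Taking $q>4$, Morrey's embedding gives $W^{4,q}(B)\hookrightarrow C^{3,\gamma}(\overline B)$ with $\gamma=1-4/q$.

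The main obstacle is this last identification between the very weak solution and the strong solution. Step 1 only needs $u$ to be a very weak solution of \eqref{P}, so that case is already covered. The remaining delicate points are the existence of the $W^{4,q}$ solution, which I would quote from the standard $L^p$ theory for the Dirichlet biharmonic problem on the ball, and the integration by parts, which needs care with the boundary terms. Alternatively, Step 2 can be skipped by applying the $L^q$ theory directly with the bound from Step 1 for large $q$.
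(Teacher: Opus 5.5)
Your proposal is correct and follows essentially the paper's route. Exponential integrability from Corollary~\ref{integrability} together with \eqref{f0} gives $f(\cdot,u)\in L^r(B)$ for all $r$; the $L^r$ theory then provides a strong $W^{4,r}\cap W^{2,r}_0$ solution, uniqueness of very weak solutions identifies it with $u$, and Morrey's embedding concludes. Your Step 2 (the $L^\infty$ bound via H\"older against the logarithmic kernel) is a harmless extra that the paper skips, as you yourself note.
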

\begin{proof} Let $u \in L^1(B)$ be such that $f(x,u) \in L^1(B)$ and assume that $u$ is  a very weak solution of \eqref{App} in the sense \eqref{very weak solution}. By \hyperref[integrability]{Corollary \ref{integrability}}, it follows that
\begin{equation*}
    \int_{B} e^{ku}\,\mathrm{d}x<+\infty\quad \forall\, k>0.
\end{equation*}
Thus, from $\eqref{f0}$, we get for every $r\geq 1$
\begin{align*}
    \lVert f(x,u)\rVert_{L^r(B)}^{r}=\int_{B} |f(x,u)|^{r}\,\mathrm{d}x\leq c^r\int_{B}e^{rpu(x)}\,\mathrm{d}x<+\infty.
\end{align*}

Now, from $L^p$ regularity theory (see, for instance, \cite[Corollary 2.21]{Gazzola}), there exists a (unique) strong solution $\tilde{u} \in W^{4,r}(B) \cap W_0^{2,r}(B)$ of the problem
 \begin{equation}\label{reg problem}
\left\{\begin{array}{rcl}
\Delta^2 \tilde{u}  =  f(x,u) \ \ &\text{in} \ \ B,\\
\tilde{u}  =  \partial_{\nu}\tilde{u} = 0\ \ &\text{on} \ \ \partial B,\\
\end{array}\right.
\end{equation}
moreover,
\begin{equation}\label{Lp estimate ball reg}
    \lVert \tilde{u}\rVert_{W^{4,r}(B)}\leq C\lVert f(x,u)\rVert_{L^r(B)}.
\end{equation}
In particular $\tilde{u}$ is a solution of \eqref{reg problem}  in the very weak sense  and, as a consequence, the function $w\coloneq u -\tilde{u}$ is a very weak solution of the problem
 \begin{equation*}
\left\{\begin{array}{rcl}
\Delta^2 w  =  0 \ \ &\text{in} \ \ B,\\
w  =  \partial_{\nu}w = 0\ \ &\text{on} \ \ \partial B,\\
\end{array}\right.
\end{equation*}
that is
\begin{equation*}\label{equality of dist and strong sol}
    \int_{B} w \Delta^2\varphi\,\mathrm{d}x=0\quad \forall \varphi \in C_{\mathrm{cl}}^4(\overline{B}).
\end{equation*}

Next, proceeding as in \cite[Lemma 1]{BrezisCazenave}, we have  $w=0$ and, in particular,
\[
u=\tilde{u}\in W^{4,r}(B)\cap W^{2,r}_0(B).
\]
Moreover, in view of \eqref{Lp estimate ball reg}, we arrive at the estimate

\begin{equation*}
    \lVert u\rVert_{W^{4,r}(B)}\leq C \lVert f(x,u)\rVert_{L^{r}(B)}<+\infty\quad \forall r\geq1
\end{equation*}
and we can conclude by Morrey's inequality that $u \in C^{3,\gamma}(\overline{B}) \cap L^{\infty}(B)$ for some $0<\gamma<1.$
\end{proof}

In view of \hyperref[regularity1]{Theorem \ref{regularity1}}, whenever we consider a very weak solution $u$ of  \eqref{App} with $f: \overline{B} \times \mathds{R}^{+} \to \mathds{R}^{+}$ satisfying $\eqref{f0}$, we can assume without loss of generality that $u \in C^{3,\gamma}(\overline{B}) \cap L^{\infty}(B)$, for some $0<\gamma<1$. In particular, \hyperref[regularity1]{Theorem \ref{regularity1}} allows us to work thereafter within the class of weak solutions, while keeping the hypotheses and estimates formulated at the very weak level. Specifically, under the assumptions \eqref{f1'} or \eqref{f1} of
\hyperref[a priori estimate2 final]{Theorem \ref{a priori estimate2 final}} or \hyperref[a priori estimate1 final]{Theorem \ref{a priori estimate1 final}} respectively, every very weak solution of \eqref{App} belongs to $C^{3,\gamma}(\overline{B}) \cap L^{\infty}(B)$, for some $0<\gamma<1$.

The next result shows that the regularity of a very weak solution can be improved to $C^{4,\gamma}(\overline B)$ under the following 
\emph{Hölder--Lipschitz structure condition} on $f$: for every $M>0$,
there exist constants $C_M,L_M>0$ such that for all $x,y\in\overline B$ and all $s,t\in[0,M]$,
\begin{align}
|f(x,t)-f(y,t)| &\le C_M |x-y|^\gamma, \label{eq:Holderx}\\
|f(x,t)-f(x,s)| &\le L_M |t-s|. \label{eq:Lipschitzt}
\end{align}

\begin{corollary}\label{rem:C4alpha}
Assume $\eqref{f0}$ and assume that  $u \in C^{3,\gamma}(\overline{B}) \cap L^{\infty}(B)$, for some $0<\gamma<1$, is a solution of \eqref{App}. In addition, suppose that \eqref{eq:Holderx} and \eqref{eq:Lipschitzt} hold with the same $0< \gamma < 1$. Then  $f(\cdot,u(\cdot))\in C^{0,\gamma}(\overline B)$, and therefore $u\in C^{4,\gamma}(\overline B)$. Hence, in particular $u\in C^4(\overline B)$ and $u$ is a classical solution of \eqref{App}.
\end{corollary}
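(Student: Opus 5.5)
Set $M\coloneqq\|u\|_{L^\infty(B)}$, which is finite by hypothesis (and by Theorem~\ref{regularity1}). Since $u\ge 0$, we have $u(\overline B)\subseteq[0,M]$, so \eqref{eq:Holderx} and \eqref{eq:Lipschitzt} apply with the constants $C_M$, $L_M$. The plan is first to show that the composition $g(x)\coloneqq f(x,u(x))$ is $\gamma$-Hölder on $\overline B$. Then we invoke Schauder theory for the Dirichlet biharmonic problem in the ball and identify the resulting classical solution with $u$ using the uniqueness already established.

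For the Hölder bound, fix $x,y\in\overline B$ and split
\[
|g(x)-g(y)|\le |f(x,u(x))-f(y,u(x))|+|f(y,u(x))-f(y,u(y))|\le C_M|x-y|^\gamma+L_M|u(x)-u(y)|.
\]
Since $u\in C^{3,\gamma}(\overline B)\subseteq C^1(\overline B)$ and $B$ is convex, the mean value theorem gives $|u(x)-u(y)|\le\|\nabla u\|_{L^\infty}|x-y|\le 2^{1-\gamma}\|\nabla u\|_{L^\infty}|x-y|^\gamma$, because $|x-y|\le 2$. Boundedness of $g$ follows from continuity of $f$ on the compact set $\overline B\times[0,M]$. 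Hence $g\in C^{0,\gamma}(\overline B)$.

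Next, by the Schauder estimates for the biharmonic Dirichlet problem in smooth domains (e.g. \cite[Theorem 2.19]{Gazzola}), there is a unique $\tilde u\in C^{4,\gamma}(\overline B)$ with $\Delta^2\tilde u=g$ in $B$ and $\tilde u=\partial_\nu\tilde u=0$ on $\partial B$. Integrating by parts twice, exactly as in the computation following \eqref{Dirichlet test space}, shows that $\tilde u$ is a very weak solution of \eqref{P} with datum $g\in L^1(B)$. The function $u$ is also a very weak solution with the same datum. By the uniqueness part of Lemma~\ref{solution} (equivalently, the argument of \cite[Lemma 1]{BrezisCazenave} used in Theorem~\ref{regularity1}), we get $u=\tilde u$. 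Therefore $u\in C^{4,\gamma}(\overline B)\subseteq C^4(\overline B)$, it satisfies $\Delta^2u=f(x,u)$ pointwise, and it attains the boundary conditions classically. Positivity is inherited from \eqref{App}.

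No step is genuinely hard. The point that needs the most care is invoking the Schauder theory with the correct boundary regularity, which is automatic for the ball. The only other thing to check is that the uniqueness argument applies. Alternatively, one can differentiate the Green representation, but citing the Schauder estimate is cleaner.
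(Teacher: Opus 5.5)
Your proposal is correct and follows essentially the same route as the paper: the same two-term splitting via \eqref{eq:Holderx} and \eqref{eq:Lipschitzt} to get $f(\cdot,u(\cdot))\in C^{0,\gamma}(\overline B)$, followed by the global Schauder estimate of \cite[Theorem 2.19]{Gazzola}. The one addition is that you make explicit the identification of the Schauder solution with $u$ through uniqueness of very weak solutions; the paper leaves this step implicit, and your version of it is correct.
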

\smallskip
\begin{proof}
Let $M:=\|u\|_{L^\infty(B)}$ and set $h(x):=f(x,u(x))$ for $x\in\overline B$.
For any $x,y\in\overline B$, we have
\[
|h(x)-h(y)|
=
|f(x,u(x))-f(y,u(y))|
\le
|f(x,u(x))-f(y,u(x))|
+
|f(y,u(x))-f(y,u(y))|.
\]
The first term is controlled by \eqref{eq:Holderx} (with $t=u(x)\in[0,M]$), hence
\[
|f(x,u(x))-f(y,u(x))|\le C_M |x-y|^\gamma.
\]
The second term is controlled by \eqref{eq:Lipschitzt} (with $u(x),u(y)\in[0,M]$), giving
\[
|f(y,u(x))-f(y,u(y))|\le L_M |u(x)-u(y)|.
\]
Since $u\in C^{0,\gamma}(\overline B)$, there exists $K>0$ such that
$|u(x)-u(y)|\le K|x-y|^\gamma$ for all $x,y\in\overline B$. Consequently,
\[
|h(x)-h(y)|\le (C_M+L_MK)\,|x-y|^\gamma,
\]
which proves that $h=f(\cdot,u(\cdot))\in C^{0,\gamma}(\overline B)$.

Finally, applying the global Schauder estimates for the Dirichlet biharmonic problem in the ball (see for example a more general version in  \cite[Theorem 2.19]{Gazzola}), we infer that the (strong) solution of
\[
\Delta^2 u=h\quad\text{in }B,\qquad u=\partial_\nu u=0\quad\text{on }\partial B
\]
belongs to $C^{4,\gamma}(\overline B)$. This yields $u\in C^{4,\gamma}(\overline B)$.

\end{proof}

\subsection{Global uniform $L^1$ control on the nonlinearity}\label{subsec4.2}
Our final goal is to obtain uniform $L^\infty$ bounds for the scalar problem \eqref{App}. To this end, our strategy consists in first deriving a uniform $L^1$ bound on the nonlinearity. This intermediate result is established in the present section (see \hyperref[uniform L11]{Theorem \ref{uniform L11}} below). Its proof combines the spectral lower growth condition at infinity \eqref{f2} with a moving plane method near the boundary, under suitable structural monotonicity assumptions on \(f(x,t)\) in a neighborhood of \(\partial B\) (see \eqref{f3}). This step is inspired by the non variational topological approach developed for fourth order problems in \cite{Berchio} and \cite{Mancini}, see also \cite{ReichelWeth}. It crucially relies on the sharp estimates for the biharmonic Green function in balls, which are not directly available in general smooth bounded domains; this is the main reason why we restrict our analysis to the ball setting.

Let $\varphi_1$ be an eigenfunction associated with the first eigenvalue $\lambda_1$ of $(\Delta^2, H_0^{2}(B))$. By the theory developed in \cite[Chapter 3]{Gazzola}, we know that $\lambda_1>0$ and $\varphi_1$ can be chosen to be positive in $B$. In the next lemma, in addition to the continuity of $f$ and \eqref{f0}, we assume \eqref{f2}. It is useful to point out that \eqref{f2} yields the existence of  $a>\lambda_1$ such that $$\liminf_{t\to+\infty} \frac{f(x,t)}{t}\geq a \quad  \text{uniformly in } x \in B$$
and hence, for each $0<\varepsilon<a$, there exists $c>0$ such that
\begin{equation}\label{f2 implication}
    f(x,t)\geq (a-\varepsilon)t-c \quad \forall\, t \geq 0,\, x\in \overline{B}.
\end{equation}

\begin{lemma}\label{L1 uniform with phi1} Assume that $\eqref{f0}$ and $\eqref{f2}$ hold. Then there exists a constant $C>0$ such that
    \begin{equation}\label{L1 estimate with phi1}
        \int_{B} f(x,u)\varphi_1\,\mathrm{d}x\leq C,
    \end{equation}
for every very weak solution $u$ of \eqref{App}.
\end{lemma}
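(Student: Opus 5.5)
The plan is the classical test-function argument with the first eigenfunction. By Theorem \ref{regularity1}, every very weak solution $u$ of \eqref{App} lies in $C^{3,\gamma}(\overline B)\cap L^\infty(B)$. The function $\varphi_1$ belongs to $C^{4}(\overline B)$ with $\varphi_1=\partial_\nu\varphi_1=0$ on $\partial B$; this follows from elliptic regularity for the eigenvalue problem in the smooth ball, and in fact $\varphi_1$ is smooth. So $\varphi_1\in C^4_{\mathrm{cl}}(\overline B)$ is an admissible test function in \eqref{very weak solution}. Using $\Delta^2\varphi_1=\lambda_1\varphi_1$, we get the identity
\[
\lambda_1\int_B u\,\varphi_1\,\mathrm{d}x=\int_B f(x,u)\,\varphi_1\,\mathrm{d}x .
\]

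Next I use \eqref{f2 implication}. Fix $\varepsilon\in(0,a-\lambda_1)$, so that $a-\varepsilon>\lambda_1$, and let $c>0$ be the corresponding constant; recall that $u>0$. Multiplying \eqref{f2 implication} by $\varphi_1>0$ and integrating gives
\[
\int_B f(x,u)\varphi_1\,\mathrm{d}x\ \ge\ (a-\varepsilon)\int_B u\,\varphi_1\,\mathrm{d}x - c\int_B\varphi_1\,\mathrm{d}x .
\]
Substituting the identity $\int_B u\varphi_1=\lambda_1^{-1}\int_B f(x,u)\varphi_1$ yields
\[
\Bigl(\tfrac{a-\varepsilon}{\lambda_1}-1\Bigr)\int_B f(x,u)\varphi_1\,\mathrm{d}x\ \le\ c\,\|\varphi_1\|_{L^1(B)} .
\]
Since $(a-\varepsilon)/\lambda_1>1$, this gives \eqref{L1 estimate with phi1} with
\[
C=\frac{c\lambda_1\|\varphi_1\|_{L^1(B)}}{a-\varepsilon-\lambda_1}.
\]
This constant depends only on $f$ and $\lambda_1$, not on $u$.

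The only delicate point is checking that $\varphi_1$ is a legitimate test function in $C^4_{\mathrm{cl}}(\overline B)$. This requires the positivity of $\varphi_1$, which holds in the ball by Boggio's positivity, as recalled from \cite[Chapter 3]{Gazzola}. It also requires $C^4$ regularity up to the boundary, which comes from bootstrapping the $L^p$ and Schauder theory of \cite{Gazzola} for $\Delta^2\varphi_1=\lambda_1\varphi_1$. The finiteness of all the integrals involved is guaranteed by $f(x,u)\in L^1(B)$ and the boundedness of $\varphi_1$.
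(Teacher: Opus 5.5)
Your proof is correct and follows essentially the paper's argument: test with $\varphi_1$, apply the linear lower bound \eqref{f2 implication} with $a-\varepsilon>\lambda_1$ using $u\ge 0$ and $\varphi_1>0$, and absorb, which gives the same constant as the paper. The one difference is that you test the very weak formulation directly with $\varphi_1\in C^4_{\mathrm{cl}}(\overline B)$, relying on the smoothness of $\varphi_1$, whereas the paper first uses Theorem \ref{regularity1} to pass to the weak formulation and tests with $\varphi_1\in H_0^2(B)$; your route does not actually need \eqref{f0}, and positivity of $\varphi_1$ is needed for the integrated inequality rather than for admissibility of the test function.
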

\begin{proof} Let $u$ be a very weak solution of \eqref{App}. By \hyperref[regularity1]{Theorem \ref{regularity1}},  $u$ is in particular a weak solution of \eqref{App}. Thus, choosing $\varphi_1 \in H_0^{2}(B)$ as a test function, we obtain the identity
\begin{equation}\label{weak sol of App}
    \int_{B} f(x,u)\varphi_1\,\mathrm{d}x=\int_{B} \Delta u \Delta \varphi_1\,\mathrm{d}x=\lambda_1 \int_{B}u \varphi_1\,\mathrm{d}x.
\end{equation}
From \eqref{f2}, there exists of $a>\lambda_1$ such that \eqref{f2 implication} holds for some $0<\varepsilon<a-\lambda_1$. Consequently,
\begin{equation}\label{weak sol of App2}
    \int_{B} f(x,u)\varphi_1\,\mathrm{d}x\geq \int_{B}[(a-\varepsilon)u-c]\varphi_1\,\mathrm{d}x=(a-\varepsilon)\int_{B} u\varphi_1\,\mathrm{d}x-c\int_{B}\varphi_1\,\mathrm{d}x.
\end{equation}
Setting $c_1 \coloneq c\int_{B}\varphi_1\,\mathrm{d}x$, we observe that $c_1>0$ since $\varphi_1>0$ in $B$. Then, combining \eqref{weak sol of App} and \eqref{weak sol of App2}, we have
\begin{equation*}
    (a-\varepsilon)\int_{B} u \varphi_1\,\mathrm{d}x-c_1\leq \lambda_1\int_{B} u \varphi_1\,\mathrm{d}x.
\end{equation*}
 Rearranging the terms, we get
\begin{equation*}
    \int_{B}u\varphi_1\,\mathrm{d}x\leq \frac{c_1}{(a-\varepsilon)-\lambda_1},
\end{equation*}
which, in turn, yields
\begin{equation*}
    \int_{B} f(x,u)\varphi_1\,\mathrm{d}x=\lambda_1\int_{B} u\varphi_1\,\mathrm{d}x\leq \frac{c_1\lambda_1}{(a-\varepsilon)-\lambda_1}.
\end{equation*}
\end{proof}

Next, we recall a preliminary result inspired by Berchio, Gazzola, and Weth \cite{Berchio} and Mancini and Romani \cite{Mancini}, see also \cite{ReichelWeth}. It guarantees the existence of a neighborhood of the boundary,
$$
\omega=\omega_r\coloneq \{x \in B:\mathrm{dist}(x,\partial B)<r\} \quad \text{ for some sufficiently small } r>0,
$$
where solutions are decreasing along suitable outer directions. Its proof relies on a moving plane technique that uses pointwise estimates of the Green's function in the ball, rather than the maximum principle arguments typically employed for second-order problems. Here, assumption \eqref{f3} comes into play, and it should be understood as a monotonicity condition
near the boundary. In simple terms, according to \eqref{f3}-(i), the nonlinearity is allowed to grow
with the solution, but by \eqref{f3}-(ii) it is not allowed to grow when the spatial
variable moves outward. This is exactly what is needed in order to
compare a point with its reflection when the moving plane is close to
the boundary.

\begin{lemma}\label{moving 6} Assume that \eqref{f0} and \eqref{f3} hold. Then there exists a neighborhood $\omega$ of $\partial B$ and $\delta>0$ such that  every very weak solution $u$ of \eqref{App} satisfies
$$\langle \nabla u(x),\theta\rangle\leq 0,$$
for all  $x \in \omega$ and for all unitary vectors $\theta \in \mathds{R}^4$ such that $|\theta-\nu(x)|<\delta.$
\end{lemma}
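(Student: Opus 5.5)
By Theorem \ref{regularity1}, every very weak solution $u$ belongs to $C^{3,\gamma}(\overline B)$, is positive, and satisfies
\[
u(x)=\int_B G(x,y)f(y,u(y))\,\mathrm{d}y .
\]
The plan is a moving-plane argument based on this Green representation, as in \cite{Berchio} and \cite{Mancini}. Monotonicity will come from comparing $G$ with its reflected versions, not from a maximum principle.

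\textbf{Setup.} Fix a unit direction $e\in\mathds S^3$ and $\lambda\in(1-\overline r',1)$, where $\overline r'$ is small and will be chosen later. Define:
\begin{itemize}
\item the half-space $T_\lambda=\{x:\langle x,e\rangle=\lambda\}$ and the cap $\Sigma_\lambda=\{x\in B:\langle x,e\rangle>\lambda\}$;
\item the reflection $x^\lambda=x+2(\lambda-\langle x,e\rangle)e$, which maps $\Sigma_\lambda$ into $B$;
\item the difference $w_\lambda(x)=u(x^\lambda)-u(x)$ for $x\in\Sigma_\lambda$.
\end{itemize}

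\textbf{Splitting the representation.} Split $B$ into $\Sigma_\lambda$, its reflection $\Sigma_\lambda'$, and the rest $R_\lambda=B\setminus(\Sigma_\lambda\cup\Sigma_\lambda')$. Changing variables $y\mapsto y^\lambda$ on $\Sigma_\lambda'$ gives
\[
w_\lambda(x)=\int_{\Sigma_\lambda}\!\bigl[G(x^\lambda,y)-G(x,y)\bigr]f(y,u(y))+\bigl[G(x^\lambda,y^\lambda)-G(x,y^\lambda)\bigr]f(y^\lambda,u(y^\lambda))\,\mathrm{d}y+\int_{R_\lambda}\!\bigl[G(x^\lambda,y)-G(x,y)\bigr]f(y,u(y))\,\mathrm{d}y.
\]
We use the following reflection inequalities for Boggio's kernel, valid for $x,y\in\Sigma_\lambda$:
\[
G(x^\lambda,y^\lambda)=G(x,y)\ \text{up to correct ordering},\qquad G(x^\lambda,y)\ge G(x,y),\qquad G(x^\lambda,y)\ge G(x,y^\lambda).
\]
These are the inequalities proved in \cite{Berchio} (the Ferrero--Gazzola--Weth kernel comparison). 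For $y\in R_\lambda$ one has $G(x^\lambda,y)-G(x,y)\ge0$. With them we regroup the integrand as
\[
[G(x^\lambda,y)-G(x,y^\lambda)]\bigl(f(y^\lambda,u(y^\lambda))-f(y,u(y))\bigr)+\text{nonnegative terms}.
\]

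\textbf{Sign of the $f$-difference.} Since $y^\lambda$ lies "inward" of $y$ along $e$, hypothesis \eqref{f3}(ii) gives $f(y^\lambda,t)\ge f(y,t)$, provided $e$ is within $\overline\delta$ of $\nu(y)$ along the whole segment; this holds for caps near the boundary point $e$. Then \eqref{f3}(i) gives
\[
f(y^\lambda,u(y^\lambda))-f(y,u(y))\ge f(y,u(y^\lambda))-f(y,u(y)),
\]
and the right side is $\ge0$ wherever $w_\lambda\ge0$.

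\textbf{Main obstacle: starting and continuing the plane.} The hardest part is to show that $\Sigma_\lambda^-=\{w_\lambda<0\}$ is empty for every $\lambda$ close to $1$, uniformly in $u$, since no maximum principle is available. I would proceed in two steps.
\begin{itemize}
\item \emph{Strict positivity from the far region.} The positive contribution from $R_\lambda$ should dominate. I would use the boundary behaviour of Boggio's kernel, $G(x,y)\sim d(x)^2d(y)^2$, to bound $G(x^\lambda,y)-G(x,y)$ from below by roughly $d(x^\lambda)^2-d(x)^2$ times a positive function of $y$. Combined with $f\ge0$, this gives $w_\lambda>0$ in thin caps.
\item \emph{Absorbing the negative set.} On $\Sigma_\lambda^-$ the negative part is at most $L\int_{\Sigma_\lambda^-}G\,|w_\lambda|$. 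Here $L$ comes from the Lipschitz bound on $f$ on $[0,\|u\|_\infty]$. Because the caps are thin, $\|G\|_{L^1}$ over $\Sigma_\lambda$ is small, and this forces $\Sigma_\lambda^-=\emptyset$.
\end{itemize}
The difficulty is that $L$ depends on $\|u\|_\infty$, which we do not yet control. To avoid this, I would first try to use only the monotonicity of $f$, in the spirit of \cite{Mancini}'s Green-function moving planes, so that the argument needs no Lipschitz constant. Handling this uniformity is where I expect the real work.

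\textbf{Conclusion.} Once $w_\lambda\ge0$ on $\Sigma_\lambda$ for every $\lambda\in(1-\overline r',1)$ and every direction $e$, letting $x\to T_\lambda$ gives $\langle\nabla u(x),e\rangle\le0$ for $x$ in the caps. Now choose $\omega=\omega_r$ with $r<\overline r'$ and $\delta$ small in terms of $\overline\delta$. Then for each $x\in\omega$ and each $\theta$ with $|\theta-\nu(x)|<\delta$, the point $x$ lies on the plane $T_\lambda$ with $e=\theta$ and $\lambda=\langle x,\theta\rangle>1-\overline r'$. This yields the claim.
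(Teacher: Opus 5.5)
\textbf{Verdict: genuine gap.} Your overall route is moving planes driven by Boggio's representation formula, not by a maximum principle. That is the route behind \cite[Lemma~2.5]{Mancini}, and the paper's proof simply reduces to it: it checks that \eqref{f3} is hypothesis (H3a)--(H3b) there and that Theorem~\ref{regularity1} provides the regularity. Your proposal, however, leaves open exactly the step that makes the lemma true, and the mechanism you suggest for it cannot work.

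\textbf{Why the absorption step fails.} Absorbing $\Sigma_\lambda^-$ in thin caps needs a Lipschitz constant for $t\mapsto f(x,t)$ on $[0,\|u\|_\infty]$. Neither \eqref{f0} nor \eqref{f3} provides one: $f$ is only continuous and monotone in $t$. Even with such a constant, the admissible cap width would depend on $\|u\|_\infty$. Then $\omega$ and $\delta$ would depend on $u$, which destroys the uniformity used in Lemma~\ref{Linfinity bound near the boundary}. Far-region positivity cannot start the plane either: before $w_\lambda\ge 0$ is known in the cap, $f(y^\lambda,u(y^\lambda))-f(y,u(y))$ has no sign.

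\textbf{The missing idea: no quantitative smallness is needed.} For a fixed $u$, start the plane qualitatively from the boundary behaviour of $u$. For example, $\Delta u>0$ on $\partial B$, since Lemma~\ref{sucessive} gives $\Delta_yG(x,y)=\frac{(1-|x|^2)^2}{4\pi^2|x-y|^4}>0$ for $y\in\partial B$; a $u$-dependent start is harmless. Let $\overline\lambda$ be the infimum of those $\mu$ such that, for all $\lambda\in(\mu,1)$, $w_\lambda>0$ in $\Sigma_\lambda$ and $\partial_eu<0$ on $T_\lambda\cap B$.
\begin{itemize}
\item By continuity $w_{\overline\lambda}\ge 0$. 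As long as the cap and its reflection stay where \eqref{f3} applies, \eqref{f3}(i)--(ii) make the $f$-difference nonnegative.
\item The strict kernel inequalities and $f(\cdot,u)\not\equiv 0$ then give $w_{\overline\lambda}>0$. Differentiating the representation gives $\partial_eu<0$ on $T_{\overline\lambda}\cap B$.
\item A compactness argument moves the plane further: limits of bad points must lie on $T_{\overline\lambda}$ or on $\partial B$, and both are excluded.
\end{itemize}
So the plane can stop only where \eqref{f3} ceases to apply, a position fixed by $\overline r,\overline\delta$ alone. This, not smallness, is the source of uniformity in $u$.

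\textbf{The kernel inequalities you quote are wrong.} Since $[XY]^2/|x-y|^2=1+q(x,y)$ with $q(x,y)=\frac{(1-|x|^2)(1-|y|^2)}{|x-y|^2}$, Boggio's $G$ is a strictly increasing function of $q$.
\begin{itemize}
\item For $\lambda>0$ the reflection is not an isometry of $B$, and $|x^\lambda|<|x|$ on $\Sigma_\lambda$. Hence $G(x^\lambda,y^\lambda)>G(x,y)$, a strict inequality, not an equality.
\item $G(x^\lambda,y)\ge G(x,y)$ fails for $x,y\in\Sigma_\lambda$ close together: $G(x,y)\to+\infty$ as $y\to x$, while $G(x^\lambda,y)$ stays bounded.
\end{itemize}
Write $F_1=f(y,u(y))$, $F_2=f(y^\lambda,u(y^\lambda))$, $\kappa_1=G(x^\lambda,y)-G(x,y)$ and $\kappa_2=G(x^\lambda,y^\lambda)-G(x,y^\lambda)$. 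The cap integrand is then $\kappa_2(F_2-F_1)+(\kappa_1+\kappa_2)F_1$. So the coefficient of the $f$-difference is $\kappa_2$, not $G(x^\lambda,y)-G(x,y^\lambda)$. What you need is:
\begin{itemize}
\item $\kappa_2>0$, which follows from monotonicity in $q$, since $|x-y^\lambda|\ge |x-y|=|x^\lambda-y^\lambda|$ and $1-|x^\lambda|^2>1-|x|^2$;
\item $\kappa_1+\kappa_2>0$, i.e. $G(x^\lambda,y^\lambda)+G(x^\lambda,y)>G(x,y)+G(x,y^\lambda)$, which is the genuinely nontrivial comparison from \cite{Berchio}.
\end{itemize}
Your treatment of $R_\lambda$ is fine, as is the sign of the $f$-difference via \eqref{f3} and the final passage from planes to the directions $|\theta-\nu(x)|<\delta$.
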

\begin{proof} Condition \eqref{f3} corresponds to assumptions H3a) and H3b) in \cite{Mancini}. In addition, \eqref{f0} and \hyperref[regularity1]{Theorem \ref{regularity1}} ensure the regularity needed to apply the moving-plane argument of \cite[Lemma~2.5]{Mancini}.
\end{proof}

We are now ready to obtain a uniform $L^\infty$ estimate of very weak solutions near the boundary.
\begin{lemma}\label{Linfinity bound near the boundary} Assume that \eqref{f0}, \eqref{f2}, and \eqref{f3} hold. Then there exists a neighborhood $\omega$ of $\partial B$ and a constant $C_1>0$ such that $\lVert u\rVert_{L^{\infty}(\omega)}\leq C_1$ for every very weak solution $u$ of \eqref{App}.
\end{lemma}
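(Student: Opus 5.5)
This is the standard Berchio--Gazzola--Weth / Mancini--Romani argument, and I would combine the uniform weighted $L^1$ bound of Lemma \ref{L1 uniform with phi1} with the monotonicity of Lemma \ref{moving 6}. The plan is to show that the value of $u$ at a point near the boundary is dominated by its average over a cone-shaped region lying deeper inside $B$, where $\varphi_1$ is bounded below. Throughout, Theorem \ref{regularity1} lets us treat $u$ as a $C^{3,\gamma}(\overline B)$ weak solution.

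\emph{Step 1.} Fix $\omega=\omega_r$ and $\delta>0$ as in Lemma \ref{moving 6}, shrinking $r$ if needed. For $x\in\omega$, consider the truncated cone
\[
\Gamma_x\coloneqq\{x-t\theta:\ \theta\in\mathds S^3,\ |\theta-\nu(x)|<\delta,\ 0\le t\le \rho\}.
\]
Here $\rho$ is chosen so that every segment $x-s\theta$, $0\le s\le t$, stays inside $\omega$, and so that the far part $\Gamma_x'\coloneqq\Gamma_x\cap\{t\ge\rho/2\}$ lies in a fixed compact set $K\subset B$, uniformly in $x\in\omega$. Since $u\in C^1$, integrating $\langle\nabla u,\theta\rangle\le0$ along these segments gives $u(x)\le u(y)$ for every $y\in\Gamma_x$. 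Averaging over $\Gamma_x'$, whose measure $|\Gamma_x'|\ge c_0>0$ is independent of $x$, yields
\[
u(x)\le\frac{1}{c_0}\int_{K}u\,\mathrm{d}y\le\frac{1}{c_0\min_K\varphi_1}\int_B u\varphi_1\,\mathrm{d}y .
\]

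\emph{Step 2.} The proof of Lemma \ref{L1 uniform with phi1} already gives $\int_B u\varphi_1\,\mathrm{d}x\le c_1/((a-\varepsilon)-\lambda_1)$, uniformly over all solutions. Combined with Step 1, this gives $\|u\|_{L^\infty(\omega)}\le C_1$.

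The main obstacle is the geometry in Step 1: the cone, the segments, and the region $\Gamma_x'$ must be fixed uniformly in $x\in\omega$. Each segment has to stay inside $\omega$, where the monotonicity of Lemma \ref{moving 6} holds, and this requires $\rho<r$ and a possibly smaller $\omega$; this is harmless because a shrunken $\omega$ is still a neighborhood of $\partial B$. At the same time $\Gamma_x'$ must stay a fixed positive distance from $\partial B$. I would handle this by taking $\omega=\omega_{r/2}$, $\rho=r/2$ and $\delta$ small, so that the inward displacement is at least $\rho(1-\delta^2/2)/2$ on $\Gamma_x'$. The lower bound on $|\Gamma_x'|$ then follows from rotation invariance of the construction.
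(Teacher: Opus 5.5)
Your proposal is correct and follows essentially the paper's argument: the paper also uses the monotonicity of Lemma~\ref{moving 6} to produce, for each $y$ in a smaller boundary neighbourhood $\omega$, a set $I_y\subseteq B\setminus\omega$ of measure bounded below independently of $y$ on which $u\ge u(y)$. It cites de Figueiredo--Lions--Nussbaum for the cone construction that you make explicit, and then concludes from $\lambda_1\int_B u\varphi_1\,\mathrm{d}x=\int_B f(x,u)\varphi_1\,\mathrm{d}x\le C$ (Lemma~\ref{L1 uniform with phi1}), exactly as in your Step 2. One detail to tighten: Lemma~\ref{moving 6} requires $|\theta-\nu(z)|<\delta$ at every point $z=x-s\theta$ of the segment, not only at the apex $x$, and $\nu(z)$ tilts away from $\theta$ as $s$ grows, so the cone must have aperture strictly smaller than the lemma's $\delta$ (e.g.\ $\delta/2$, after taking $r<1/2$), which your freedom to shrink parameters already allows.
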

\begin{proof} Let $u$ be a very weak solution of \eqref{App}. By \hyperref[moving 6]{Lemma \ref{moving 6}},  $u$ satisfies a monotonicity property on a neighborhood $\omega_r$ of $\partial B$ along suitable outer directions. This allows us to argue as in \cite[Equation (8$'$)]{dFLN1982} and identify
a neighborhood $\omega \subseteq \omega_r$ of $\partial B$ such that, for every $y \in \omega$, we can construct a set $I_{y}\subseteq B$ satisfying the following properties:
\begin{itemize}
    \item[-] $u(x) \geq u(y)$ for every $x \in I_{y},$
    \item[-] $|I_{y}|\geq \sigma>0$ where $\sigma$ does not depend on $y$,
    \item[-] $I_{y} \subseteq B\setminus \omega.$
\end{itemize}

\noindent
Thus, by \hyperref[L1 uniform with phi1]{Lemma \ref{L1 uniform with phi1}} and the positivity of $\lambda_1$ and $\varphi_1$, there exists a constant $C>0$ independent of $u$ such that
\begin{align*}
    C\geq \int_{B} f(x,u)\varphi_1\,\mathrm{d}x=\lambda_1\int_{B}u \varphi_1\,\mathrm{d}x\geq \lambda_1\int_{B \setminus \omega} u\varphi_1\,\mathrm{d}x\geq \lambda_1 \min_{B\setminus\omega}\varphi_1\int_{I_{y} }u(x)\,\mathrm{d}x\geq \lambda_1 \left(\min_{B\setminus \omega}\varphi_1\right) u(y)\sigma,
\end{align*}
for every $y \in \omega$ . This implies the desired uniform $L^{\infty}$ boundedness of every solution $u$ of \eqref{App} in $\omega$. 
\end{proof}

We finally deduce our desired uniform $L^1$ estimate on the nonlinearity.

\begin{theorem}\label{uniform L11} Assume that $\eqref{f0},$ $\eqref{f2}$, and $\eqref{f3}$ hold. Then there exists a constant $M>0$ such that
\begin{equation*}
    \int_{B} f(x,u)\,\mathrm{d}x\leq M,
\end{equation*}
for every very weak solution $u$ of \eqref{App}.    
\end{theorem}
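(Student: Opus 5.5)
The plan is to split the integral of $f(x,u)$ over $B$ into the part near the boundary and the interior part, and to control each piece with one of the two preceding lemmas. Let $\omega$ and $C_1$ be the neighborhood of $\partial B$ and the constant given by Lemma~\ref{Linfinity bound near the boundary}. Shrinking $\omega$ if necessary, we may assume that $\omega$ is of the form $\omega_r$ for some $r>0$, so that $K \coloneqq B\setminus\omega = \overline{B(0,1-r)}$ is a compact subset of $B$. Then we write
\[
\int_B f(x,u)\,\mathrm{d}x=\int_{\omega} f(x,u)\,\mathrm{d}x+\int_{B\setminus\omega} f(x,u)\,\mathrm{d}x .
\]

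For the boundary piece, we use that every very weak solution satisfies $0<u\le C_1$ on $\omega$, uniformly in $u$, by Lemma~\ref{Linfinity bound near the boundary}. Since $f$ is continuous on the compact set $\overline B\times[0,C_1]$, we get
\[
\int_\omega f(x,u)\,\mathrm{d}x\le |B|\max_{\overline B\times[0,C_1]} f,
\]
which is independent of $u$. Alternatively, \eqref{f0} gives directly the bound $c\,e^{pC_1}|B|$.

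For the interior piece, we use Lemma~\ref{L1 uniform with phi1}. The first eigenfunction $\varphi_1$ is continuous and positive in $B$, so $m\coloneqq\min_{K}\varphi_1>0$. Because $f\ge 0$,
\[
\int_{B\setminus\omega} f(x,u)\,\mathrm{d}x\le \frac1m\int_{B\setminus\omega} f(x,u)\varphi_1\,\mathrm{d}x\le \frac1m\int_B f(x,u)\varphi_1\,\mathrm{d}x\le \frac{C}{m},
\]
where $C$ is the constant from \eqref{L1 estimate with phi1}. Adding the two bounds gives $M\coloneqq |B|\max_{\overline B\times[0,C_1]}f + C/m$, which is independent of the solution.

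There is no real obstacle here, since the difficult work (the moving-plane step and the weighted $L^1$ bound) is already contained in Lemmas~\ref{moving 6}, \ref{L1 uniform with phi1} and \ref{Linfinity bound near the boundary}. The only point requiring care is that $B\setminus\omega$ stays a fixed positive distance from $\partial B$, so that the degeneracy of $\varphi_1$ at the boundary does not enter. This is guaranteed by choosing $\omega=\omega_r$ with $r$ fixed, independent of $u$.
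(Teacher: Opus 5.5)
Your proposal is correct and follows essentially the same route as the paper: split $B$ into the boundary layer $\omega$, where the uniform bound $u\le C_1$ from Lemma~\ref{Linfinity bound near the boundary} controls $f(x,u)$ pointwise, and the interior $B\setminus\omega$, where dividing by $\min_{B\setminus\omega}\varphi_1>0$ reduces matters to Lemma~\ref{L1 uniform with phi1}. You are also right to insist that the constant be $\min_{B\setminus\omega}\varphi_1$, taken over a set at positive distance from $\partial B$; the paper's last line writes $\min_B\varphi_1$, which should be read as this minimum.
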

\begin{proof} Let $\omega$ and $C_1>0$ be as in \hyperref[Linfinity bound near the boundary]{Lemma \ref{Linfinity bound near the boundary}}. By \eqref{f0}, \hyperref[Linfinity bound near the boundary]{Lemma \ref{Linfinity bound near the boundary}}, \hyperref[L1 uniform with phi1]{Lemma \ref{L1 uniform with phi1}}, and the positivity of $\varphi_1$, we get
    \begin{align*}
        \int_{B}f(x,u)\,\mathrm{d}x&=\int_{\omega} f(x,u)\,\mathrm{d}x+\int_{B \setminus \omega} f(x,u)\,\mathrm{d}x\\
        &\leq \sup_{(x,t) \in \overline{\omega}\times [0,C_1]}f(x,t)\int_{\omega}\,\mathrm{d}x+\frac{1}{\min_{B\setminus \omega}\varphi_1}\int_{B\setminus\omega} f(x,u)\varphi_1\,\mathrm{d}x\\
        &\leq \sup_{(x,t) \in \overline{\omega}\times [0,C_1]}f(x,t)|B|+\frac{C}{\min_{B}\varphi_1}
        \coloneq M(f,B).
    \end{align*}
\end{proof}

\section{A priori estimates for the scalar problem in the subcritical regime}\label{a priori subcrit}
Throughout this section, we assume that $f$ satisfies \eqref{f1'}. The goal is to provide a proof of \hyperref[a priori estimate2 final]{Theorem \ref{a priori estimate2 final}}. Our approach is based on the following inequalities.
\begin{lemma}{\cite[Lemma 2.4]{criticalandsubcritical}}\label{real number inequality1} For all $t \geq 0$, we have
\begin{equation*}
  st \leq
     \begin{cases} 
      e^{t^2}-1+s(\log s)^{\frac{1}{2}}, &\hspace{0.74cm}  s\geq e^{1/4}, \\
      e^{t^{2}}-1+\frac{s^{2}}{2}, &  0\leq s<e^{1/4}. \\ 
   \end{cases}
   \end{equation*}
\end{lemma}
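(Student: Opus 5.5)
The plan is a direct Young-type argument for the convex function $\Phi(t)\coloneqq e^{t^2}-1$ on $[0,\infty)$, taking $s\ge 0$ as implicit in the statement. For each fixed $s$ we have to bound $st-\Phi(t)$ from above, uniformly in $t\ge 0$. The two regimes of $s$ will be handled separately. The threshold $e^{1/4}$ should appear exactly as the condition that makes a certain derivative nonnegative.

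\emph{Case $0\le s<e^{1/4}$.} This case is elementary and in fact works for every $s\ge 0$. From $e^{r}\ge 1+r$ with $r=t^2$ we get $e^{t^2}-1\ge t^2$. By the arithmetic--geometric mean inequality, $st\le t^2+\frac{s^2}{4}$. Hence $st\le e^{t^2}-1+\frac{s^2}{4}\le e^{t^2}-1+\frac{s^2}{2}$.

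\emph{Case $s\ge e^{1/4}$.} Set $t_0\coloneqq(\log s)^{1/2}$; then $t_0\ge \tfrac12$ precisely because $s\ge e^{1/4}$.
\begin{itemize}
\item If $0\le t\le t_0$, the bound is immediate: $st\le st_0=s(\log s)^{1/2}$, and $e^{t^2}-1\ge 0$.
\item If $t>t_0$, consider $g(t)\coloneqq e^{t^2}-1-st+st_0$. For $t\ge t_0$ we have
\[
g'(t)=2te^{t^2}-s\ge 2t_0e^{t_0^2}-s=s(2t_0-1)\ge 0,
\]
since $t\mapsto 2te^{t^2}$ is increasing and $e^{t_0^2}=s$. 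So $g$ is nondecreasing on $[t_0,\infty)$. Moreover $g(t_0)=s-1\ge 0$, because $s\ge e^{1/4}>1$. Therefore $g(t)\ge 0$, which is the claimed inequality $st\le e^{t^2}-1+s(\log s)^{1/2}$.
\end{itemize}

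There is no genuine obstacle in this argument. The only point requiring care is the choice of the splitting point $t_0=\sqrt{\log s}$, for which $e^{t_0^2}=s$. This choice makes both the value $g(t_0)$ and the derivative $g'(t_0)$ nonnegative exactly when $s\ge e^{1/4}$.
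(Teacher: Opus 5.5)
Your proof is correct and complete. The paper gives no argument of its own here: the lemma is quoted from \cite{criticalandsubcritical}, so there is no in-paper proof to follow.

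Your steps check out. In the first case, $(t-s/2)^2\ge 0$ and $e^{t^2}-1\ge t^2$ give the bound with the sharper constant $s^2/4$, valid for every $s\ge 0$. In the second case, the split at $t_0=\sqrt{\log s}$ works, since $g(t_0)=s-1>0$ and $g'(t)\ge s(2t_0-1)\ge 0$ on $[t_0,\infty)$. One small correction to your closing remark: $g(t_0)\ge 0$ only requires $s\ge 1$. It is the derivative condition $t_0\ge \frac12$ alone that produces the threshold $e^{1/4}$.

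For comparison, a shorter route bounds the Legendre conjugate of $\Phi(t)=e^{t^2}-1$ directly, and it is the likely origin of the constant $\frac12$ in the statement. For fixed $s\ge 0$, the concave function $t\mapsto st-\Phi(t)$ on $[0,\infty)$ is maximized at the unique $t_s\ge 0$ with $2t_se^{t_s^2}=s$. Hence $st-\Phi(t)\le st_s-\Phi(t_s)\le st_s$. Since $2t_s\le s$ always, one gets $st_s\le s^2/2$. Moreover, $t_s\ge\frac12$ exactly when $s\ge\Phi'(\frac12)=e^{1/4}$, and in that range $e^{t_s^2}\le s$, i.e.\ $t_s\le\sqrt{\log s}$, which gives $st_s\le s(\log s)^{1/2}$.

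Your argument avoids the implicitly defined maximizer and improves the small-$s$ constant. The conjugate argument handles both cases at once and shows why $e^{1/4}=\Phi'(\frac12)$ is the natural threshold.
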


\noindent
The next inequality generalizes the previous one to the case of three terms. Although it was introduced in \cite[Proposition 4.2]{joaorufdjairo2}, we provide a proof here for the convenience of the reader.

\begin{lemma}\label{real number inequality2}
There exists a constant $C>0$ such that, for any $a,b \geq1$ and $r,s,t>0$, we have
$$rst\leq C\left(e^{t^{a}}+e^{r^{b}}+s(\log^{+} s)^{\frac{1}{a}+\frac{1}{b}}\right).$$
\end{lemma}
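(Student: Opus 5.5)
We prove the three-term inequality
$$rst\leq C\bigl(e^{t^a}+e^{r^b}+s(\log^+ s)^{\frac1a+\frac1b}\bigr)$$
by reducing it to an elementary fact. It says that the product of $t$ and $r$ is controlled either by the exponential terms or by a logarithmic power of $s$.

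\emph{Splitting.} Put $\beta\coloneqq \frac1a+\frac1b$, so $0<\beta\le 2$ because $a,b\ge1$. Set $L\coloneqq\log^+ s$ and split into three cases.

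\emph{Case 1: $t\le L^{1/a}$ and $r\le L^{1/b}$.} Then $rt\le L^{\beta}$, so
$$rst\le sL^{\beta}=s(\log^+ s)^{\beta}.$$
This case also covers $s\le 1$: there $L=0$, so the case forces $r=t=0$, and $r,t>0$ excludes it.

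\emph{Case 2: $t> L^{1/a}$.} Then $t^a>\log^+ s\ge \log s$, hence $s<e^{t^a}$ and $rst\le r\,t\,e^{t^a}$. This is not immediately controlled by $e^{t^a}+e^{r^b}$, so I refine the case analysis. Assume without loss of generality that $t^a\ge r^b$ (the other ordering is symmetric). Then $r\le t^{a/b}$ and
$$rst\le s\,t^{1+a/b}.$$
Now apply Young's inequality for the dual pair $\Phi(t)=e^{t^{a}}-1$ and $\Psi(s)\sim s(\log s)^{\beta'}$. By the pattern of Lemma~\ref{real number inequality1}, a product $s\cdot g(t)$ with $g(t)=t^{1+a/b}$ is dominated by $e^{t^a}$ plus the Legendre conjugate of $\tau\mapsto e^{\tau}$ evaluated in the variable $g^{-1}$. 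Concretely, if $s\le e^{t^a}$ then
$$s\,t^{1+a/b}\le e^{t^a}t^{1+a/b},$$
which is not bounded by $Ce^{t^a}$. So the right dichotomy is different, and the cleaner route is the following.

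\emph{Cleaner route.} Use the scalar inequality $xy\le e^{x}+y\log^+ y$, the standard Young inequality for $e^x$ and $y\log y$, valid for $x,y\ge0$ up to a constant. Take $x=t^a+r^b$ and $y=s$. Since $rt\le (t^a+r^b)^{\beta}$, by weighted AM–GM or simply $t\le (t^a+r^b)^{1/a}$ and $r\le (t^a+r^b)^{1/b}$, it suffices to bound $s\,x^{\beta}$ with $x=t^a+r^b$. Consider two cases.

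If $x\le 2\log^+ s$, then $s\,x^{\beta}\le 2^{\beta}s(\log^+ s)^{\beta}$.

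If $x>2\log^+ s$, then $s<e^{x/2}$, so
$$s\,x^{\beta}\le e^{x/2}x^{\beta}\le C_\beta e^{x}=C_\beta e^{t^a}e^{r^b}.$$
This still contains a product of the exponentials rather than their sum. To fix it, treat the maximum instead. Let $m=\max(t^a,r^b)$; then $rt\le m^{\beta}$. If $m\le 2\log^+ s$, the bound is $\le 2^{\beta}s(\log^+ s)^{\beta}$. Otherwise
$$s\,m^{\beta}\le e^{m/2}m^{\beta}\le C e^{m}\le C\bigl(e^{t^a}+e^{r^b}\bigr),$$
using $\sup_{m\ge0} m^{\beta}e^{-m/2}\le \sup_{m\ge 0} m^{2}e^{-m/2}+1<\infty$. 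Note that $\beta\le 2$ makes the constant uniform in $a,b\ge1$.

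\emph{Where care is needed.} The only delicate point is choosing the maximum $m$ rather than the sum or the individual variables, so that the exponential side remains a sum. The constant $C$ depends only on the uniform bound $\beta\le2$; to see this, handle $m<1$ trivially using $m^{\beta}\le 1$.
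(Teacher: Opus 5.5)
Your final argument with $m=\max(t^a,r^b)$ is correct and is essentially the paper's proof. Under the paper's normalization $t^a\le r^b$ one has $m=r^b$, and the paper's variable $x=r^{1+b/a}$ equals $m^{\beta}$ with $x^p=m$; its dichotomy $x\le 2^{1/p}(\log s)^{1/p}$ versus the opposite is exactly your $m\le 2\log^+ s$ versus $m>2\log^+ s$, with the same uniform bound on $m^{\beta}e^{-m/2}$ coming from $\beta\le 2$ (i.e.\ $p\ge 1/2$), and using $\log^+$ absorbs the paper's separate case $s\le 1$. You should delete the abandoned Case~2 and the ``sum'' route, which are not part of the proof.
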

\begin{proof}
We split the proof into two cases. First assume that \(0<s\le 1\). Then \(\log^+s=0\), and
\[
rst\le rt\le e^r+e^t.
\]
Since \(b\ge 1\), we have \(e^r\le e\,e^{r^b}\) for every \(r>0\): indeed, if
\(r\ge 1\), then \(r^b\ge r\), while if \(0<r<1\), then \(e^r\le e\le e\,e^{r^b}\).
Analogously, \(e^t\le e\,e^{t^a}\) for every \(t>0\). Hence
\[
rst\le e\left(e^{r^b}+e^{t^a}\right),
\]
which proves the estimate when \(0<s\le 1\).
We now assume that \(s>1\). We may suppose, without lack of generality, that \(t^a\le r^b\). Hence \(t\le r^{b/a}\), and so
\[
rst\le r^{1+b/a}s.
\]
Set
\[
p:=\frac{ab}{a+b}, \qquad x:=r^{1+b/a}.
\]
Then \(p\ge 1/2\), because \(a,b\ge 1\), and
\[
x^p=\left(r^{1+b/a}\right)^{\frac{ab}{a+b}}=r^b,
\qquad
\frac1p=\frac1a+\frac1b.
\]
Therefore,
\[
rst\le xs,
\]
and it is enough to prove that
\[
R_1:=\frac{xs}{e^{x^p}+s(\log s)^{1/p}}
\]
is bounded by a constant independent of \(r,s,t\).
Assume first that
\[
x\le 2^{1/p}(\log s)^{1/p}.
\]
Then
\[
R_1
\le
\frac{2^{1/p}s(\log s)^{1/p}}
     {e^{x^p}+s(\log s)^{1/p}}
\le 2^{1/p}\le 4,
\]
since \(p\ge 1/2\).
Assume now that
\[
x>2^{1/p}(\log s)^{1/p}.
\]
Then \(x^p>2\log s\), hence \(s<e^{x^p/2}\). Consequently,
\[
R_1
\le
\frac{xs}{e^{x^p}}
\le
xe^{-x^p/2}.
\]
If \(0<x<1\), then \(xe^{-x^p/2}\le 1\). If \(x\ge 1\), since \(p\ge 1/2\),
we have \(x^p\ge \sqrt{x}\), and therefore
\[
xe^{-x^p/2}\le xe^{-\sqrt{x}/2}.
\]
The function \(x\mapsto xe^{-\sqrt{x}/2}\) is bounded on \([1,+\infty)\).
Thus \(R_1\le C\) in this second case as well.
Combining the two cases, we obtain
\[
xs\le C\left(e^{x^p}+s(\log s)^{1/p}\right).
\]
Recalling that \(x=r^{1+b/a}\), \(x^p=r^b\), and
\(1/p=1/a+1/b\), we conclude that
\[
rst
\le C\left(e^{r^b}+s(\log s)^{\frac1a+\frac1b}\right)
\le
C\left(e^{t^a}+e^{r^b}
+s(\log s)^{\frac1a+\frac1b}\right)
\]
whenever \(s>1\) and \(t^a\le r^b\).
If \(r^b\le t^a\), the same argument with the roles of \((r,b)\) and
\((t,a)\) interchanged gives the same estimate. Therefore, for every
\(r,s,t>0\),
\[
rst
\le
C\left(
e^{t^a}+e^{r^b}
+s(\log^+s)^{\frac1a+\frac1b}
\right)
\]
and the proof is complete.
\end{proof}

\noindent
We also recall Adams' inequality (see \cite{Adams}).
\begin{theorem}\label{Adams inequality}If $\Omega \subseteq \mathds{R}^4$ is a smooth bounded domain, then
\begin{equation*}
    \sup_{\substack{u \in H_0^{2}(\Omega)\\\\\lVert \Delta u\lVert_{L^2(\Omega)\leq 1}}}\int_{\Omega}e^{32\pi^2u^2(x)}\,\mathrm{d}x< \infty.
\end{equation*}
\end{theorem}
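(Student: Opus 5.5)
The plan is Adams' original potential-theoretic argument, specialized to $n=4$ with order-two derivatives and $p=2$. By density of $C_c^\infty(\Omega)$ in $H_0^2(\Omega)$ together with Fatou's lemma (a subsequence converges a.e.), it suffices to bound $\int_\Omega e^{32\pi^2u^2}\,\mathrm{d}x$ uniformly over $u\in C_c^\infty(\Omega)$ with $\|\Delta u\|_{L^2(\Omega)}\le1$. Extending such $u$ by zero to $\mathds{R}^4$, the Newtonian representation gives
\[
u(x)=\frac{1}{4\pi^2}\int_{\mathds{R}^4}\frac{g(y)}{|x-y|^2}\,\mathrm{d}y,\qquad g:=-\Delta u,
\]
since $1/(4\pi^2|x|^2)$ is the fundamental solution of $-\Delta$ in $\mathds{R}^4$. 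Here $g$ is supported in $\Omega$ and $\|g\|_{L^2}\le1$. So $|u|\le \frac{1}{4\pi^2}\, |\cdot|^{-2}*|g|$, and the problem reduces to an exponential estimate for the Riesz potential $I_2$ with $L^2$ densities.

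Next I would pass to decreasing rearrangements. O'Neil's lemma gives, for $h=k*|g|$ with $k(x)=\frac{1}{4\pi^2}|x|^{-2}$,
\[
h^*(t)\le \frac1t\int_0^t k^*(s)\,\mathrm{d}s\int_0^t g^*(s)\,\mathrm{d}s+\int_t^\infty k^*(s)g^*(s)\,\mathrm{d}s .
\]
Here $k^*(s)=\frac{1}{4\pi^2}\,(\omega_4/s)^{1/2}$, where $\omega_4=\pi^2/2$ is the volume of the unit ball. Since $|\Omega|<\infty$, one may take $t\le|\Omega|$. Then I would change variables $t=|\Omega|e^{-\tau}$, set $\phi(\sigma)=|\Omega|^{1/2}e^{-\sigma/2}g^*(|\Omega|e^{-\sigma})$ so that $\int_0^\infty\phi^2\,\mathrm{d}\sigma\le1$, and normalize $\beta=32\pi^2$. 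A short computation should bring the quantity $\beta\,h^*(t)^2$ into the form
\[
\Bigl(\int_0^\infty a(\sigma,\tau)\phi(\sigma)\,\mathrm{d}\sigma\Bigr)^2 ,
\]
with a kernel satisfying $a(\sigma,\tau)\le1$ for $0<\sigma<\tau$ and
\[
\sup_{\tau>0}\Bigl(\int_{-\infty}^0 a^2+\int_\tau^\infty a^2\Bigr)<\infty .
\]
I will check that the constant comes out right. By Adams' formula with $n=4$, $m=2$, $p'=2$,
\[
\beta=\frac{n}{\omega_{n-1}}\Bigl[\frac{\pi^{n/2}2^m\Gamma(m/2)}{\Gamma((n-m)/2)}\Bigr]^{p'}
=\frac{4}{2\pi^2}\,(4\pi^2)^2=32\pi^2 ,
\]
where $\omega_{n-1}=|\mathbb S^3|=2\pi^2$. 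This is exactly the threshold at which the leading part of the kernel equals $1$ on $\sigma<\tau$.

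Finally, I would write
\[
\int_\Omega e^{\beta u^2}\,\mathrm{d}x\le\int_0^{|\Omega|}e^{\beta h^*(t)^2}\,\mathrm{d}t=|\Omega|\int_0^\infty e^{-F(\tau)}\,\mathrm{d}\tau,\qquad F(\tau)=\tau-\Bigl(\int a\phi\Bigr)^2 ,
\]
and invoke Adams' one-dimensional lemma: under the stated hypotheses on $a$ and $\phi$, one has $\int_0^\infty e^{-F}\,\mathrm{d}\tau\le c_0$ with $c_0$ independent of $\phi$. This yields the uniform bound $c_0|\Omega|$.

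The main obstacle is the sharp constant. O'Neil's inequality and the subsequent splitting must lose nothing in the leading term; any slack only gives the result with $\beta<32\pi^2$. The proof of the one-dimensional lemma is also delicate: it uses Cauchy--Schwarz on $[0,\tau]$ to get $F\ge-C$, and then a measure estimate $|\{F\le\lambda\}|\le C(1+|\lambda|)$ obtained by comparing two points of the sublevel set. If the bookkeeping there becomes unwieldy, I would instead cite the lemma directly from \cite{Adams}.
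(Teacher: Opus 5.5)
Your proposal is correct and follows Adams' original argument; the paper gives no proof of its own and simply quotes the result from \cite{Adams}, and your constants check out: the kernel is $a(\sigma,\tau)=1$ for $0<\sigma<\tau$ and $a(\sigma,\tau)=2e^{(\tau-\sigma)/2}$ for $\sigma>\tau$, so the supremum required in Adams' one-dimensional lemma equals $4$. Two cosmetic fixes: the equality in your last display should be an inequality $\le$, and in the density step you should normalize the approximants so that $\lVert\Delta u_k\rVert_{L^2(\Omega)}\le 1$ (or skip density, since the zero extension of $u\in H_0^2(\Omega)$ already satisfies the Newtonian representation a.e.).
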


With the help of the aforementioned inequalities, and adapting the method of proof introduced by de Figueiredo, do \'O, and Ruf \cite[Section 4]{joaorufdjairo2} for second-order elliptic systems in dimension two,  we obtain the following uniform  Sobolev estimate for the fourth-order Dirichlet problem \eqref{App}.
\begin{theorem}\label{Sobolev uniform estimate} Suppose that $\eqref{f1'}$, $\eqref{f2}$, and  $\eqref{f3}$ hold. Then there exists a constant $C>0$ such that $\lVert \Delta u\rVert_{L^2(B)}\leq C$, for  every very weak solution $u$ of \eqref{App}.
\end{theorem}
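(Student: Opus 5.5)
We test the equation with $u$ itself. By Theorem \ref{regularity1}, $u$ is a weak solution in $H_0^2(B)$, so
\[
\|\Delta u\|_{L^2(B)}^2=\int_B f(x,u)\,u\,\mathrm{d}x .
\]
The task is then to bound $\int_B f(x,u)u$ by a sublinear function of $\|\Delta u\|_{L^2}^2$, using the uniform $L^1$ bound $\int_B f(x,u)\le M$ from Theorem \ref{uniform L11}.

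Set $s=f(x,u)$, $K=\|\Delta u\|_{L^2}$ (we may assume $K\ge1$), and $v=u/K$, so that $\|\Delta v\|_{L^2}=1$. We write $f(x,u)u=K\,s\,v$ and, for a small parameter $\beta>0$ chosen below, apply Lemma \ref{real number inequality1} pointwise to the product $s\cdot(\sqrt{\beta}\,v)$:
\[
s\,u=\frac{K}{\sqrt\beta}\,s\,(\sqrt\beta v)\le \frac{K}{\sqrt\beta}\Bigl(e^{\beta v^2}-1+s(\log s)^{1/2}\chi_{\{s\ge e^{1/4}\}}+\tfrac{s^2}{2}\chi_{\{s<e^{1/4}\}}\Bigr).
\]
We take $\beta\le 32\pi^2$; then Adams' inequality (Theorem \ref{Adams inequality}) gives $\int_B e^{\beta v^2}\le C$ uniformly. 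The term with $s<e^{1/4}$ is bounded by $e^{1/2}|B|$. For the remaining term, the subcritical growth \eqref{f1'} gives $\log s\le \log C+u^\alpha$, so
\[
\int_B s(\log s)^{1/2}\lesssim \int_B s\,(1+u^{\alpha/2}).
\]
This is again a product of $s$ with a power of $u$. We estimate it by splitting $\{u\le T\}$ from $\{u>T\}$ and using Lemma \ref{real number inequality2} (or Lemma \ref{real number inequality1} once more, with $u^{\alpha/2}=K^{\alpha/2}v^{\alpha/2}$). The outcome is a bound
\[
\int_B s\,u^{\alpha/2}\le C\bigl(1+M+K^{\alpha/2}(\cdots)\bigr),
\]
in which the remaining terms are either Adams-controlled exponentials of $v^2$ or further iterations of the same form. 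For example, since $s(\log^+ s)^{\alpha/2}\le C(1+s)+s\,u^{\alpha^2/2}$, iterating finitely many times reduces the power of $u$ until it is controlled by $\int s\le M$. Collecting everything, we expect an inequality
\[
K^2\le C\,K\,(1+K^{\alpha/2})+C
\]
or something similar, with total exponent $1+\alpha/2<2$. Since $\alpha<1$, this forces $K$ to be bounded.

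The main obstacle is to arrange the bookkeeping so that every power of $K$ on the right-hand side is strictly less than $2$ and the constants do not depend on $u$. This is exactly where $\alpha<1$ has to be used. If the direct iteration is messy, we would instead first bound $\|u\|_{L^\infty}$-type quantities through the Brezis--Merle inequality (Theorem \ref{B-M}) applied with $\|f(x,u)\|_{L^1}\le M$. That gives $\int_B e^{\gamma u}\le C$ for some fixed $\gamma<32\pi^2/M$, and hence $\int_B s^q\le C\int_B e^{q u^\alpha}\le C'$ for every $q$, because $u^\alpha\le \varepsilon u+C_\varepsilon$. With $f(x,u)$ bounded in $L^2$ uniformly, $\|\Delta u\|_{L^2}^2=\int_B f(x,u)u\le \|f(x,u)\|_{L^2}\|u\|_{L^2}\le C\,\|\Delta u\|_{L^2}$ by Poincaré, which yields the claim immediately. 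This fallback route seems more robust, so we would try it first.
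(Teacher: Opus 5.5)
Your fallback argument is correct and complete, and it follows a genuinely different route from the paper's proof. Regard $u$ as the very weak solution of \eqref{P} with datum $f(x,u)$. Theorem~\ref{B-M}, combined with $\lVert f(x,u)\rVert_{L^1(B)}\le M$ from Theorem~\ref{uniform L11}, gives $\int_B e^{\gamma u}\,\mathrm{d}x\le 256\pi^4/\delta$ with $\gamma=(32\pi^2-\delta)/M$ independent of $u$. Since $\alpha<1$, you have $\sup_{t\ge 0}(qt^\alpha-\gamma t)<\infty$, so \eqref{f1'} bounds $f(x,u)$ uniformly in every $L^q(B)$. The estimate $\lVert \Delta u\rVert_{L^2(B)}^2=\int_B f(x,u)u\,\mathrm{d}x\le \lambda_1^{-1/2}\lVert f(x,u)\rVert_{L^2(B)}\lVert\Delta u\rVert_{L^2(B)}$ then concludes.

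The paper does not use the Brezis--Merle inequality at this step. It splits $uf(x,u)$ on $\{u\ge 1\}$ as $\frac{f(x,u)}{1+u^\eta}\,u+\frac{f(x,u)}{1+u^\eta}\,u^{1+\eta}$. It estimates both pieces with the Orlicz-type Young inequalities of Lemmas~\ref{real number inequality1} and~\ref{real number inequality2} and Adams' inequality (Theorem~\ref{Adams inequality}); the denominator $1+u^\eta$ absorbs the factor $u^{\alpha/2}$ produced by $(\log f)^{1/2}$. This yields $\lVert\Delta u\rVert_{L^2(B)}^2\le C\bigl(1+\lVert\Delta u\rVert_{L^2(B)}+\lVert\Delta u\rVert_{L^2(B)}^{1+\eta}\bigr)$ with $\eta=\alpha/(2-\alpha)<1$. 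The uniform $L^p$ bound of Theorem~\ref{Lp uniform estimate} is derived only afterwards, from this energy bound and Adams' inequality again.

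Your route reverses that order. It gives Theorem~\ref{Lp uniform estimate}, and hence Theorem~\ref{a priori estimate2 final}, directly, with the energy bound as a byproduct. It needs neither Adams' inequality, nor the two elementary lemmas, nor the sharp constant $32\pi^2$, nor the precise form of \eqref{f1'}: it suffices that $f(x,t)\le C_\varepsilon e^{\varepsilon t}$ for every $\varepsilon>0$. In exchange, the paper's argument uses at this step only the $L^1$ bound and the $H^2_0$ energy structure, with no exponential integrability of $u$ in terms of its $L^1$ data.

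Your first sketch, by contrast, does not close as written. The exponent of $u$ in $\int_B f(x,u)u^{\alpha^n/2}\,\mathrm{d}x$ never reaches $0$, so no finite iteration produces a term bounded by $M$, while powers of $K$ keep accumulating. It can be repaired by H\"older's inequality with respect to the measure $f(x,u)\,\mathrm{d}x$: $\int_B f(x,u)u^{\alpha/2}\,\mathrm{d}x\le M^{1-\alpha/2}\bigl(\int_B f(x,u)u\,\mathrm{d}x\bigr)^{\alpha/2}=M^{1-\alpha/2}K^{\alpha}$. This gives $K^2\le C(K+K^{1+\alpha})$, and hence the bound, since $\alpha<1$.
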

\begin{proof} Let $u$ be a very weak solution of \eqref{App}. By \hyperref[regularity1]{Theorem \ref{regularity1}}, $u$ is in particular a weak solution of \eqref{App}, thus
$$
    \int_{B} |\Delta u|^2\,\mathrm{d}x=\int_{B} u f(x,u)\,\mathrm{d}x
    \leq C+ \int_{B'}\frac{f(x,u)}{1+u^{\eta}}(1+u^{\eta})u\,\mathrm{d}x
    =C+I_1+I_2,
$$
where $B'\coloneq \{x \in B: u(x)\geq 1\}$,  
\begin{equation*}
    I_1\coloneq\int_{B'} \frac{f(x,u)}{1+u^{\eta}}u\,\mathrm{d}x,\quad  I_2\coloneq \int_{B'} \frac{f(x,u)}{1+u^{\eta}}u^{\eta+1}\,\mathrm{d}x,
\end{equation*}
and $\eta>0$ will be chosen during the proof. We denote $\lVert u\rVert\coloneq \lVert \Delta u\lVert_{L^2(B)}$ and estimate $I_1$ by applying \hyperref[real number inequality1]{Lemma \ref{real number inequality1}} in the following way
\begin{align*}
    I_1&= \lVert u\rVert \int_{B'} \frac{f(x,u)}{1+u^{\eta}}\left(\frac{u}{\lVert u \rVert}\right)\,\mathrm{d}x\\
    &\leq \lVert u \rVert \left \{2\int_{B'}e^{\left(\frac{u}{\lVert u\rVert}\right)^{2}}\,\mathrm{d}x+\int_{B''_{1}}\frac{f(x,u)}{1+u^{\eta}}\left(\log \frac{f(x,u)}{1+u^{\eta}}\right)^{1/2}\,\mathrm{d}x+\frac{1}{2}\int_{B''_2}\left(\frac{f(x,u)}{1+u^{\eta}}\right)^2\,\mathrm{d}x\right\},
\end{align*}
where
\begin{equation*}
   B''_1 \coloneq B' \cap \left \{x \in B: \frac{f(x,u)}{1+u^{\eta}}> e^{1/4}\right\}\quad \text{and}\quad  B''_{2} \coloneq B' \cap \left \{x \in B: \frac{f(x,u)}{1+u^{\eta}}\leq e^{1/4}\right\}.
\end{equation*}
Then, we apply Adams' inequality \hyperref[Adams inequality]{Theorem \ref{Adams inequality}}, to obtain
\begin{equation}\label{adams app}
    I_1 \leq \lVert u \rVert \left\{C+\int_{B''_{1}}\frac{f(x,u)}{1+u^{\eta}}\left(\log \frac{f(x,u)}{1+u^{\eta}}\right)^{1/2}\,\mathrm{d}x\right\}.
\end{equation}
First, we estimate the integral on the right hand side of \eqref{adams app} using $\eqref{f1'}$ and \hyperref[uniform L11]{Theorem \ref{uniform L11}} as follows:
\begin{align*}
    \int_{B''_1} \frac{f(x,u)}{1+u^{\eta}}\left(\log \frac{f(x,u)}{1+u^{\eta}}\right)^{1/2}\,\mathrm{d}x
    &\leq  \int_{B''_1}\frac{f(x,u)} {1+u^{\eta}}(\log f(x,u))^{1/2}\,\mathrm{d}x
    \leq \int_{B''_1}\frac{f(x,u)}{1+u^{\eta}}(\log (Ce^{u^{\alpha}}))^{1/2}\,\mathrm{d}x\\
    &\leq\int_{B''_1}\frac{f(x,u)}{1+u^{\eta}}(\log \max\{C,1\}+u^{\alpha})^{1/2}\,\mathrm{d}x\\
    &\leq(\log \max\{C,1\}+1)^{1/2} \int_{B_1''}\frac{f(x,u)}{1+u^{\eta}}u^{\alpha/2}\,\mathrm{d}x\\
    &\leq C_0\int_{B''_1}\frac{f(x,u)}{1+u^{\eta}}(u^{\alpha/2}+1)\,\mathrm{d}x
    \leq C_0\int_{B''_1} f(x,u)\,\mathrm{d}x
    \leq C_0M,
\end{align*}
provided
\begin{equation}\label{condition1}
\eta \geq \frac{\alpha}{2}.
\end{equation}
Summarizing, we just obtained
\begin{equation}\label{I1 estimate complete}
    I_1\leq C(\lVert u \rVert +1)\quad \text{provided}\quad \eta\geq \frac{\alpha }{2}.
\end{equation}
Now, we estimate $I_2$ using \hyperref[real number inequality2]{Lemma \ref{real number inequality2}}:
$$
    I_2=\lVert u \rVert^{\eta+1}\int_{B'}\frac{f(x,u)}{1+u^{\eta}}\left(\frac{u}{\lVert u\rVert}\right)^{\eta}\frac{u}{\lVert u\rVert}\,\mathrm{d}x
    \leq C \lVert u\rVert^{\eta+1}\left\{\int_{B'}e^{\left(\frac{u}{\lVert u\rVert}\right)^{a\eta}}\,\mathrm{d}x+\int_{B'}e^{\left(\frac{u}{\lVert u\rVert}\right)^{b}}\,\mathrm{d}x+J\right\},
$$
where 
\begin{equation*}
    J\coloneq \int_{B'} \frac{f(x,u)}{1+u^{\eta}}\left(\log^{+}\frac{f(x,u)}{1+u^{\eta}}\right)^{\frac{1}{a}+\frac{1}{b}}\,\mathrm{d}x .
\end{equation*}
Next, we can choose $a,b\geq 1$ such that
\begin{equation}\label{condition2}
    a\eta\leq 2\quad \text{and}\quad b\leq 2,
\end{equation}
and apply Adams' inequality \hyperref[Adams inequality]{Theorem \ref{Adams inequality}} in order to obtain
\begin{equation}\label{adams app2}
    I_{2}\leq C\lVert u\rVert^{\eta+1}(C+J).
\end{equation}
Finally, the integral $J$ in \eqref{adams app2} can be estimated as above:
\begin{align*}
J&\leq C\int_{B'} \frac{f(x,u)}{1+u^{\eta}}u^{\alpha(a^{-1}+b^{-1})}\,\mathrm{d}x
\leq C\int_{B'} \frac{f(x,u)}{1+u^{\eta}}\left(u^{\alpha(a^{-1}+b^{-1})}+1\right)\,\mathrm{d}x
\leq C\int_{B'} f(x,u)\,\mathrm{d}x
\leq CM,
\end{align*}
provided
\begin{equation}\label{condition3}
    \alpha(a^{-1}+b^{-1})\leq \eta.
\end{equation}
Thus, we just proved that
\begin{equation}\label{I2 estimate complete}
    I_2\leq C\lVert u \rVert^{\eta+1}\quad \text{provided}\quad \alpha(a^{-1}+b^{-1})\leq \eta,\hspace{0.1cm}a\eta\leq 2\hspace{0.1cm }\text{and }b\leq 2.
\end{equation}
Hence, from \eqref{I1 estimate complete} and \eqref{I2 estimate complete} we get
\begin{align*}
    \lVert u\rVert^{2}=C+I_1+I_2\leq C(\lVert u\rVert+\lVert u\rVert^{\eta+1}+1),
\end{align*}
which implies
\begin{equation}\label{final estimate}
    \lVert u \rVert \leq 2\max\{C,1\}(1+\lVert u \rVert^{\eta}).
\end{equation}
Now, under conditions \eqref{condition1}, \eqref{condition2} and \eqref{condition3} we have that
\begin{equation*}
    \eta\geq \alpha/2, \hspace{0.2cm} a^{-1}+b^{-1}\geq (\eta+1)/2
\hspace{0.2cm }\text{and}\hspace{0.2cm} a^{-1}+b^{-1}\leq \eta/\alpha,
\end{equation*}
which implies
\begin{equation*}
    \frac{\eta+1}{2}\leq \frac{\eta}{\alpha}, \quad \text{ or equivalently } \quad \eta \geq \frac{\alpha}{2-\alpha}.
\end{equation*}
We choose equality in the relation above and since $\alpha \in (0,1)$, we have $\eta \in (0,1)$, and the desired uniform estimate follows directly by \eqref{final estimate}.
\end{proof}

The above uniform Sobolev estimate enables us to show that the global uniform $L^1$ bound on the nonlinearity, obtained in \hyperref[uniform L11]{Theorem \ref{uniform L11}}, can be improved in the subcritical regime \eqref{f1'} to a uniform $L^p$ bound for every $p>1$.

\begin{theorem}\label{Lp uniform estimate} Suppose that $\eqref{f1'}$, $\eqref{f2}$, and  $\eqref{f3}$ hold. If $p>1$, then there exists a constant $M_p>0$ such that
$$\int_B (f(x,u))^p \, \mathrm{d}x \leq M_p,$$
for every very weak solution $u$ of \eqref{App}.
\end{theorem}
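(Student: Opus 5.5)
The plan is to combine the uniform bound $\|\Delta u\|_{L^2(B)}\le C$ from Theorem~\ref{Sobolev uniform estimate} with Adams' inequality (Theorem~\ref{Adams inequality}). The subcritical growth \eqref{f1'} then does the rest, since $t^\alpha$ with $\alpha<1$ is negligible compared with $t^2$ at infinity.

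Fix $p>1$ and let $u$ be a very weak solution of \eqref{App}. By Theorem~\ref{regularity1}, $u\in H_0^2(B)$, and by Theorem~\ref{Sobolev uniform estimate} we have $\|u\|\coloneqq\|\Delta u\|_{L^2(B)}\le K$, with $K$ independent of $u$. From \eqref{f1'},
\[
\int_B f(x,u)^p\,\mathrm{d}x\le C^p\int_B e^{p u^\alpha}\,\mathrm{d}x .
\]

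The key elementary step is Young's inequality with exponents $2/\alpha$ and $2/(2-\alpha)$. For every $\varepsilon>0$ it gives
\[
p\,u^\alpha\le \varepsilon u^2+C_{\varepsilon,p}\qquad\text{for all } u\ge 0 .
\]
Choose $\varepsilon=32\pi^2/K^2$. Then, whenever $\|u\|>0$,
\[
\varepsilon u^2\le 32\pi^2\,\frac{u^2}{\|u\|^2}.
\]
Hence
\[
\int_B e^{p u^\alpha}\,\mathrm{d}x\le e^{C_{\varepsilon,p}}\int_B e^{32\pi^2 (u/\|u\|)^2}\,\mathrm{d}x\le e^{C_{\varepsilon,p}}\,A ,
\]
where $A<\infty$ is the supremum in Adams' inequality applied to $u/\|u\|$, which has $\|\Delta(u/\|u\|)\|_{L^2(B)}=1$. (In fact any fixed $\varepsilon$ with $\varepsilon K^2\le 32\pi^2$ works.) The degenerate case $\|u\|=0$ cannot occur, since $u>0$; in any case $u=0$ would give the trivial bound $C^p|B|$. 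Setting $M_p\coloneqq C^p e^{C_{\varepsilon,p}}A$ yields a constant depending only on $p$, $\alpha$, $C$ and $K$.

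I see no serious obstacle. The only point requiring care is that the constant in Young's inequality depends on $\varepsilon$, and therefore on the uniform Sobolev bound $K$; this is harmless because $K$ does not depend on $u$. Alternatively, one could avoid Adams' inequality and use only the uniform $L^1$ bound of Theorem~\ref{uniform L11} together with the Brezis--Merle inequality (Theorem~\ref{B-M}). That route would give $\int_B e^{\beta u}\,\mathrm{d}x\le C$ only for $\beta<32\pi^2/M$, and one would then use $p u^\alpha\le \beta u+C$. This alternative works just as well and shows the result does not even need Theorem~\ref{Sobolev uniform estimate}. I would present the Adams-based argument, since it fits the flow of the section.
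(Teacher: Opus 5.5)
Your argument is correct and is essentially the paper's proof: the paper also combines the uniform bound $\|\Delta u\|_{L^2(B)}\le M$ from Theorem~\ref{Sobolev uniform estimate} with the elementary inequality $e^{pt^{\alpha}}\le C_{p,\mu}e^{\mu t^2}$ (your Young step), and then applies Adams' inequality after choosing $\mu$ with $\mu M^2\le 32\pi^2$. Your side remark is also sound: the uniform $L^1$ bound of Theorem~\ref{uniform L11} together with Theorem~\ref{B-M} gives $\int_B e^{\beta u}\,\mathrm{d}x\le C$ for a fixed $\beta>0$ independent of $u$, and combining this with $pt^\alpha\le \beta t+C$ yields the same conclusion without Theorem~\ref{Sobolev uniform estimate}.
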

\begin{proof}
Let $u$ be a very weak solution of \eqref{App}. By \hyperref[Sobolev uniform estimate]{Theorem \ref{Sobolev uniform estimate}}, there exists $M>0$, independent of $u$, such that $\lVert \Delta u\rVert_{L^2(B)}\leq M$.

Let $\alpha \in (0,1)$ be as in $\eqref{f1'}$. Then, for every $\mu>0$, there exists a constant $C_{p,\mu}>0$ such that
$$
    e^{pt^{\alpha}}\leq C_{p,\mu}e^{\mu t^2}\quad  \forall t\geq 0.
$$
Combining this estimate with \eqref{f1'}, we get
\begin{equation*}
     \int_B (f(x,u))^p \, \mathrm{d}x \leq C^p \int_B e^{pu^{\alpha}} \, \mathrm{d}x \leq C^p \cdot C_{p,\mu} \int_B e^{\mu u^{2}} \, \mathrm{d}x\leq C^p \cdot C_{p,\mu} \int_B e^{\mu M^2\frac{u^2}{\lVert \Delta u\rVert_{L^2(B)}^2}} \, \mathrm{d}x.
\end{equation*}
Choosing $\mu>0$ small enough such that $M^2\mu \leq 32\pi^2$, the result follows directly from Adams' inequality \hyperref[Adams inequality]{Theorem \ref{Adams inequality}}.
\end{proof}

We conclude this section with the proof of \hyperref[a priori estimate2 final]{Theorem \ref{a priori estimate2 final}}, which is a direct consequence of \hyperref[Lp uniform estimate]{Theorem \ref{Lp uniform estimate}}. Indeed, in view of \hyperref[Lp uniform estimate]{Theorem \ref{Lp uniform estimate}}, we can apply
standard $L^p$ estimates (see  \cite[Corollary 2.21]{Gazzola}):
\begin{equation*}
    \lVert u\rVert_{L^{\infty}(B)}\leq C\lVert f(x,u)\rVert_{L^{p}(B)}\leq C,
\end{equation*}
completing the proof of \hyperref[a priori estimate2 final]{Theorem \ref{a priori estimate2 final}}.

\section{A priori estimates for the scalar problem in the critical regime}\label{subsec a priori crit}

Throughout this section, we assume that $f$ satisfies \eqref{f1}, \eqref{f2}, and \eqref{f3}. Our goal is to prove \hyperref[a priori estimate1 final]{Theorem \ref{a priori estimate1 final}}, namely to obtain a priori estimates for very weak solutions of \eqref{App} in the critical regime \eqref{f1}. Unlike in the subcritical regime, here we cannot expect to improve the global uniform $L^1$ bound on the nonlinearity to a uniform $L^p$ bound, since \eqref{f1} imposes a critical exponential growth on $f(x,t)$. Therefore, we follow a different strategy inspired by the blow-up argument performed in \cite[Section 5]{joaorufdjairo2} for second-order elliptic systems in dimension two.

Assume that $(u_n)$ is a sequence of solutions of \eqref{App}. \hyperref[uniform L11]{Theorem \ref{uniform L11}} ensures that the sequence $(f(x,u_n))$ is bounded in $L^1(B).$ So it follows, passing to a subsequence if necessary (see \cite[page 115 and Problem 24]{BrezisFA}) that there is a bounded Radon measure $\mu$ such that
\begin{equation}\label{measure convergence}
    f(x,u_n) \rightharpoonup \mu.
\end{equation}
More precisely
\begin{equation}\label{measure convergence def}
    \int_{B} f(x,u_n)\varphi\,\mathrm{d}x\to \int_{B} \varphi\,\mathrm{d}\mu\quad \forall \varphi \in C_c(B).
\end{equation}
Since $f$ is a non-negative function it follows that $\mu$ is a non-negative measure. We also observe that, as a consequence of \hyperref[solution]{Lemma \ref{solution}}, \hyperref[estimates]{Lemma \ref{estimates}}, and  \hyperref[uniform L11]{Theorem \ref{uniform L11}}, the sequence of solutions $(u_n)$ of \eqref{App} is uniformly bounded in $L^1(B)$. Indeed,
\begin{align*}
    \lVert u_n\rVert_{L^{1}(B)}=\int_{B} \int_{B} G(x,y)f(y,u_n(y))\,\mathrm{d}y\,\mathrm{d}x&\leq \frac{1}{8\pi^2}\int_{B}\int_{B} \log \left(\frac{2}{|x-y|}\right)f(y,u_n(y))\,\mathrm{d}y\,\mathrm{d}x\\
    &= \frac{1}{8\pi^2}\int_{B}\int_{B} \log \left(\frac{2}{|x-y|}\right)f(y,u_n(y))\,\mathrm{d}x\,\mathrm{d}y\\
    &\leq\frac{1}{8\pi^2} \int_{B}\log \left(\frac{2}{|x|}\right)\,\mathrm{d}x\,\lVert f(x,u_n)\rVert_{L^{1}(B)},
\end{align*}
that is,
\begin{equation}\label{L1 bound of solutions}
    \lVert u_n\rVert_{L^1(B)}\leq C,\quad \forall n \in \mathds{N}.
\end{equation}
\begin{definition}\label{regular point def} We say that $x_0 \in B$ is a regular point of the measure $\mu$ if there is a function $\psi \in C_c(B)$, $0 \leq \psi \leq1$ in $B$, with $\psi \equiv 1$ in some neighborhood $x_0$, such that
\begin{equation*}
    \int \psi\,\mathrm{d}\mu<32\pi^2.
\end{equation*}
We denote $\Sigma_{\mu}$ the set of non-regular points in $B$ for the measure $\mu.$
\end{definition}
\begin{remark}\label{non-regular points finite} The set $\Sigma_{\mu}$ is finite. Indeed, if $x_0 \in \Sigma_{\mu}$, we have that
\begin{equation*}
    \int \psi\,\mathrm{d}\mu\geq 32\pi^2,
\end{equation*}
for each function $\psi \in C_c(B)$, $0 \leq \psi \leq1$ in $B$, with $\psi \equiv 1$ in some neighborhood of $x_0.$ Thus, $\mu(\{x_0\}) \geq 32\pi^2$. Finally, since $\mu (B)=\int\,\mathrm{d}\mu<+\infty$, it follows that $\Sigma_{\mu}$ is a finite set.
\end{remark}
\begin{definition}\label{blow up set} We define the blow-up set $S$ for the sequence of solutions $(u_n)$ of \eqref{App} as
\begin{equation*}
    S\coloneq \left\{x \in B:\text{there exists }(x_n)\subseteq B\hspace{0.1cm}\text{such that }x_n \to x\hspace{0.1cm}\text{and }u_n(x_n)\to+\infty \right\}.
\end{equation*}
\end{definition}
The objective of the subsequent lemmas is to establish that, under the assumptions of \hyperref[a priori estimate1 final]{Theorem \ref{a priori estimate1 final}}, we have $S=\emptyset$ (see \hyperref[a priori crit]{Theorem \ref{a priori crit}} below). Once this is achieved, the proof of \hyperref[a priori estimate1 final]{Theorem \ref{a priori estimate1 final}} follows immediately. Indeed, if the desired uniform \(L^\infty\) estimate failed, then some sequence of solutions would blow up, yielding a nonempty blow-up set $S$, which contradicts \hyperref[a priori crit]{Theorem \ref{a priori crit}} below.

\begin{lemma}\label{a priori local} Suppose that \eqref{f0} with $p=1$, \eqref{f2}, and  \eqref{f3} hold. If $x_0$ is a regular point for the measure $\mu$, then there exist constants $\rho>0$ and $M>0$, independent of $n$, such that
 \begin{equation*}
     \lVert u_n\rVert_{L^{\infty}(B(x_0,\rho))}\leq M.
 \end{equation*}   
\end{lemma}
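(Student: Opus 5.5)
We follow the classical Brezis--Merle localization argument, with Corollary \ref{B-M general ball} in place of the second-order inequality. Since $x_0$ is regular, there is $\psi\in C_c(B)$, $0\le\psi\le1$, $\psi\equiv1$ on some $B(x_0,2R)\subset\subset B$, with $\int\psi\,\mathrm{d}\mu<32\pi^2$. By \eqref{measure convergence def}, for $n$ large,
\[
\int_{B(x_0,2R)} f(x,u_n)\,\mathrm{d}x\le\int_B f(x,u_n)\psi\,\mathrm{d}x\le 32\pi^2-2\delta
\]
for some fixed $\delta>0$. Finitely many $n$ are harmless, since each $u_n$ is bounded by Theorem \ref{regularity1}.

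\textbf{Splitting.} On $B_{2R}\coloneqq B(x_0,2R)$ we write $u_n=w_n+h_n$. Here $w_n$ is the very weak solution of $\Delta^2 w_n=f(x,u_n)$ in $B_{2R}$ with $w_n=\partial_\nu w_n=0$ on $\partial B_{2R}$, given by the rescaled Green formula of Lemma \ref{solution}. The remainder $h_n\coloneqq u_n-w_n$ is biharmonic in $B_{2R}$.

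Corollary \ref{B-M general ball}, applied with $\|f(\cdot,u_n)\|_{L^1(B_{2R})}\le 32\pi^2-2\delta$, gives
\[
\int_{B_{2R}}\exp\Bigl(\tfrac{32\pi^2-\delta}{32\pi^2-2\delta}|w_n|\Bigr)\,\mathrm{d}x\le C .
\]
So $e^{q|w_n|}$ is bounded in $L^1(B_{2R})$ for some $q>1$. By Lemma \ref{estimates} (rescaled) and \eqref{L1 bound of solutions}, the $w_n$ are also bounded in $L^1(B_{2R})$.

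\textbf{Controlling $h_n$.} We show $h_n$ is bounded in $L^\infty(B(x_0,R))$. We have $\|h_n\|_{L^1(B_{2R})}\le\|u_n\|_{L^1}+\|w_n\|_{L^1}\le C$. Interior estimates for biharmonic functions (mean value property of $\Delta h_n$ and $h_n$, or the Poisson-type representation on balls) then give $\|h_n\|_{L^\infty(B(x_0,3R/2))}\le C\|h_n\|_{L^1(B_{2R})}\le C$.

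This is where the lack of a comparison principle bites. In the second-order case one uses $h_n\ge0$ or the maximum principle; here we instead need an $L^1\to L^\infty$ interior estimate for biharmonic functions that are only very weak. Since $u_n$ and $w_n$ are smooth in the interior by Theorem \ref{regularity1} and local $L^p$ theory, $h_n$ is a genuine biharmonic function. Its bound then follows from the Almansi decomposition $h=h_1+|x-x_0|^2h_2$ with $h_1,h_2$ harmonic, or from Pizzetti-type mean value formulas. Making this step rigorous and uniform is the main obstacle.

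\textbf{Bootstrap.} Using \eqref{f0} with $p=1$ on $B(x_0,3R/2)$,
\[
f(x,u_n)\le c\,e^{u_n}\le c\,e^{\|h_n\|_\infty}e^{|w_n|},
\]
so $f(\cdot,u_n)$ is bounded in $L^q(B(x_0,3R/2))$ with $q>1$.

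Take a cutoff $\eta\in C_c^\infty(B(x_0,3R/2))$ with $\eta\equiv1$ on $B(x_0,R)$, and let $v_n$ be the Dirichlet solution in $B_{3R/2}$ with datum $\eta f(\cdot,u_n)$. Lemma \ref{estimates} and Hölder's inequality give $\|v_n\|_\infty\le C\|f(\cdot,u_n)\|_{L^q}$, since $\log(2/|x-y|)\in L^{q'}$. Then $u_n-v_n$ is biharmonic in $B(x_0,R)$, with $L^1$ norm bounded on $B(x_0,R)$, and the same interior estimate bounds it on $B(x_0,\rho)$ with $\rho=R/2$.

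Combining the bounds on $v_n$ and $u_n-v_n$ gives $\|u_n\|_{L^\infty(B(x_0,\rho))}\le M$. Alternatively, one can use directly that $u_n=w_n+h_n$ and estimate $w_n$ by the same Hölder argument once the $L^q$ bound on $f(\cdot,u_n)$ is in hand.
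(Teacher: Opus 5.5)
Your proposal is correct and follows essentially the same route as the paper. You apply Brezis--Merle on a small ball to get $e^{q|w_n|}$ bounded in $L^1$ for some $q>1$, then bound the biharmonic remainder via the interior $L^1\to L^\infty$ estimate, which the paper simply cites as standard, so it is not really an obstacle (the two-radii Pizzetti mean value formula gives it directly), then get $f(\cdot,u_n)$ bounded in $L^q$ from \eqref{f0} with $p=1$, and finish with a local $L^\infty$ bound. The only differences are minor: you bound $\int_{B(x_0,2R)} f(x,u_n)\,\mathrm{d}x$ directly by $\int_B f(x,u_n)\psi\,\mathrm{d}x$, which is slightly cleaner than the paper's passage through $\mu(V_{x_0})$, and you replace the paper's cited interior $L^q\to L^\infty$ estimate for $u_{1,n}$ with an explicit cutoff, Green-function and H\"older argument.
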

\begin{proof} Using the fact that $x_0$ is a regular point of the measure $\mu,$ we have a function $\psi \in C_c(B)$, $0\leq \psi \leq 1$ in $B$, with $\psi \equiv 1$ in some neighborhood $V_{x_0}$ of $x_0$, such that $\int \psi\,\mathrm{d}\mu<32\pi^2.$ Thus $\int_{V_{x_0}}\,\mathrm{d}\mu<32\pi^2$, which implies that there exist $R>0$, $\delta\in(0,32\pi^2)$, and $n_0 \in \mathds{N}$ such that $B(x_0,R) \subset \subset B$ and for all $n \geq n_0$
\begin{equation}\label{local a priori 1}
    \int_{B(x_0,R)} f(x,u_n)\,\mathrm{d}x\leq 32\pi^2-\delta.
\end{equation}
Now, for each $n\geq n_0$, write $u_n=u_{1,n}+u_{2,n}$, where
 \begin{equation}
\left\{\begin{array}{rcl}
\Delta^2 u_{1,n}  =  f(x,u_n) \ \ &\text{in} \ \ B(x_0,R),\\
u_{1,n}  =  \partial_{\nu}u_{1,n} = 0\ \ &\text{on} \ \ \partial B(x_0,R).\\
\end{array}\right.
\end{equation}
and $\Delta^2 u_{2,n}=0$ in $B(x_0,R).$ Next, we apply the Brezis-Merle inequality \eqref{BM-ball-R} and obtain
\begin{equation}\label{local a priori}
    C\geq \int_{B(x_0,R)} e^{(32\pi^2-\delta/2)\frac{|u_{1,n}(x)|}{\lVert f\rVert_{L^1(B(x_0,R))}}}\,\mathrm{d}x\geq \int_{B(x_0,R)} e^{(32\pi^2-\delta/2)\frac{|u_{1,n}(x)|}{32\pi^2-\delta}}\,\mathrm{d}x=\int_{B(x_0,R)}e^{q|u_{1,n}(x)|}\,\mathrm{d}x,
\end{equation}
where $q\coloneq (32\pi^2-\delta/2)/(32\pi^2-\delta)>1$. Using the fact that $t \leq e^{t}$ for all $t \in \mathds{R},$ we get
\begin{equation}\label{local a priori 2}
    \lVert u_{1,n}\rVert_{L^{q}(B(x_0,R))}\leq C \quad \forall n\geq n_0.
\end{equation}
Since the function $u_{2,n}$ is biharmonic in $B(x_0,R)$, we can apply classical interior estimates, as in \cite[Section 2.5.2]{Gazzola}, and obtain
\begin{equation*}
    \lVert u_{2,n}\rVert_{L^{\infty}(B(x_0,R/2))}\leq C\lVert u_{2,n}\rVert_{L^1({B(x_0,R)})}.
\end{equation*}
On the other hand, using \eqref{L1 bound of solutions} and \eqref{local a priori 2}, we deduce
\begin{equation*}
    \lVert u_{2,n}\rVert_{L^1(B(x_0,R))}\leq \lVert u_n\rVert_{L^1(B(x_0,R))}+\rVert u_{1,n}\rVert_{L^1(B(x_0,R))}\leq C,
\end{equation*}
and so
\begin{equation}\label{local a priori 3}
    \lVert u_{2,n}\rVert_{L^{\infty}(B(x_0,R/2))}\leq C.
\end{equation}
Let us now use assumption $\eqref{f0}$ with $p=1$. Then $f(x,u_n)\leq ce^{u_{1,n}+u_{2,n}}=ce^{u_{1,n}}e^{u_{2,n}}$. So it follows from \eqref{local a priori} and \eqref{local a priori 3} that
\begin{equation}\label{local a priori 4}
    \lVert f(x,u_n)\rVert_{L^q(B(x_0,R/2))}\leq \tilde{C}\left\lVert e^{|u_{1,n}|}\right\rVert_{L^q(B(x_0,R/2))}\leq C, \quad \text{for some }q>1.
\end{equation}
Thus, again by classical interior estimates (see e.g. \cite[Section 2.5.2]{Gazzola}), \eqref{local a priori 2} and \eqref{local a priori 4}, we get
\begin{equation*}
    \lVert u_{1,n}\rVert_{L^{\infty}(B(x_0,R/4))}\leq C\left(\lVert f(x,u_n)\rVert_{L^{q}(B(x_0,R/2))}+\lVert u_{1,n}\rVert_{L^1(B(x_0,R/2))}\right)\leq C,
\end{equation*}
and since $u_{1,n}$ and $u_{2,n}$ are both uniformly bounded in $L^{\infty}(B(x_0,R/4))$, this implies our claim with $\rho=R/4$.
\end{proof}
\begin{lemma}\label{relation of regular and blowup sets} Suppose that  \eqref{f0} with $p=1$, \eqref{f2}, and  \eqref{f3} hold. Then $\Sigma_{\mu}=S$.
\end{lemma}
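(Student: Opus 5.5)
We prove the two inclusions $S\subseteq\Sigma_\mu$ and $\Sigma_\mu\subseteq S$ separately.

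\emph{Inclusion $S\subseteq\Sigma_\mu$.} Let $x_0\in S$ and suppose, by contradiction, that $x_0\notin\Sigma_\mu$. Then $x_0$ is a regular point of $\mu$, so Lemma \ref{a priori local} gives $\rho>0$ and $M>0$ with $\lVert u_n\rVert_{L^\infty(B(x_0,\rho))}\le M$ for all large $n$. On the other hand, any sequence $x_n\to x_0$ eventually lies in $B(x_0,\rho)$, so $u_n(x_n)\le M$ and $u_n(x_n)\to+\infty$ is impossible. Hence $x_0\notin S$, a contradiction.

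\emph{Inclusion $\Sigma_\mu\subseteq S$.} Let $x_0\in\Sigma_\mu$ and suppose $x_0\notin S$. Negating the definition of $S$ (and using a diagonal argument), there exist $r>0$ and $C>0$ with $B(x_0,r)\subset\subset B$ and
\[
\sup_{B(x_0,r)}u_n\le C
\]
for all $n$ large, possibly after passing to a subsequence. This is a legitimate reduction: $\mu$ is already the limit along a subsequence, and weak-$*$ limits are preserved under further subsequences. To justify the uniform bound, argue by contradiction: otherwise, for every $k$ one finds $n_k$ and $x_{n_k}\in B(x_0,1/k)$ with $u_{n_k}(x_{n_k})>k$, and this produces a blow-up sequence at $x_0$ along a subsequence. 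One should check here that the definition of $S$ is consistent with working along subsequences. With the bound in hand, \eqref{f0} with $p=1$ gives $0\le f(x,u_n)\le ce^{C}$ on $B(x_0,r)$. Now choose $\psi\in C_c(B(x_0,r))$ with $0\le\psi\le1$ and $\psi\equiv1$ on $B(x_0,s)$, where $s<r$ is small enough that $ce^{C}|B(x_0,s')|<32\pi^2$ for some $s<s'<r$ and $\operatorname{supp}\psi\subseteq B(x_0,s')$. By \eqref{measure convergence def},
\[
\int\psi\,\mathrm{d}\mu=\lim_{n\to\infty}\int_B f(x,u_n)\psi\,\mathrm{d}x\le ce^{C}\,|B(x_0,s')|<32\pi^2.
\]
Therefore $x_0$ is regular, contradicting $x_0\in\Sigma_\mu$.

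The main technical point is the second inclusion, specifically converting ``$x_0\notin S$'' into a uniform local $L^\infty$ bound on a fixed neighbourhood; the diagonal argument above handles this. Everything else follows directly from Lemma \ref{a priori local} and the weak-$*$ convergence \eqref{measure convergence def}.
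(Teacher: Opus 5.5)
Your proof is correct, and its core is the paper's. The first inclusion comes straight from Lemma~\ref{a priori local}. For the second, your key step is the paper's claim in contrapositive form. That claim says $\lVert u_n\rVert_{L^\infty(B(x_0,R))}\to+\infty$ for every $R>0$. Your step says that a local $L^\infty$ bound along a subsequence gives $f(x,u_n)\le ce^{C}$ near $x_0$, hence $\int\psi\,\mathrm{d}\mu<32\pi^2$ for a cutoff $\psi$ with small support, so $x_0$ is regular.

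You differ from the paper only in the last step. The paper uses the finiteness of $\Sigma_\mu$ (Remark~\ref{non-regular points finite}) together with the already proved inclusion $S\subseteq\Sigma_\mu$. With these it shows that a limit point $\tilde x$ of near-maximizers in $\overline{B(x_0,R)}$ must be $x_0$. You instead go from divergence of the local suprema to $x_0\in S$ by a diagonal argument, which needs neither ingredient.

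The point you left open does need settling, but it is easy. As written, your diagonal only gives $u_{n_k}(x_{n_k})\to+\infty$ along a subsequence. Since $S$ is defined through the whole sequence $(u_n)$, that alone does not contradict $x_0\notin S$. The negation of your reduction is stronger, though. Suppose no $r,C>0$ and no infinite set of indices give $\sup_{B(x_0,r)}u_n\le C$. Then for every large $k$ only finitely many $n$ satisfy $\sup_{B(x_0,1/k)}u_n\le k$.

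So choose $N_1<N_2<\cdots$ with $\sup_{B(x_0,1/k)}u_n>k$ for all $n\ge N_k$. For $N_k\le n<N_{k+1}$, pick $x_n\in B(x_0,1/k)$ with $u_n(x_n)>k$. Then $x_n\to x_0$ and $u_n(x_n)\to+\infty$ along the full sequence, so $x_0\in S$ exactly as defined. This also confirms that your reduction to a bound along a subsequence is legitimate.

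The paper's choice of $x_n\to\tilde x$ also tacitly passes to a subsequence, by compactness of $\overline{B(x_0,R)}$. So this full-sequence diagonal is, if anything, the cleaner way to close the argument.
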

\begin{proof}The inclusion $S \subseteq \Sigma_{\mu}$ follows directly from \hyperref[a priori local]{Lemma \ref{a priori local}}. It remains to prove that $\Sigma_{\mu} \subseteq S$. Let $x_0 \in \Sigma_{\mu}$. We claim that for each $R>0$ we have
\begin{align}\label{claim}
    \lim_{n \to +\infty} \lVert u_n\rVert_{L^{\infty}(B(x_0,R))}=+\infty.
\end{align}
Suppose by contradiction that there exist $R_0>0$ and a subsequence, still denoted by $(u_n)$, such that
\begin{align*}
    \lVert u_n \rVert_{L^{\infty}(B(x_0,R_0))}\leq C.
\end{align*}
Hence, using hypothesis $\eqref{f0}$ with $p=1$, it follows that
\begin{align*}
    \lVert f(x,u_n)\rVert_{L^{\infty}(B(x_0,R_0))} \leq c \lVert e^{u_n} \rVert_{L^{\infty}(B(x_0,R_0))} \leq C,
\end{align*}
which implies that for $R<R_0$ we have
\begin{align*}
    \int_{B(x_0,R)} f(x,u_n)\,\mathrm{d}x\leq CR^4.
\end{align*}
Thus, there exists $R_1>0$, such that
\begin{align*}
    \int_{B(x_0,R_1)} f(x,u_n)\,\mathrm{d}x<32\pi^2.
\end{align*}
This implies that $x_0$ is a regular point of $\mu$, which is a contradiction.

Now we observe that, using \hyperref[non-regular points finite]{Remark \ref{non-regular points finite}}, there exists $R>0$ such that $x_0$ is the only non-regular point in $\overline{B(x_0,R)}$. Next, we use \eqref{claim} to prove that $x_0 \in S.$ Indeed, from \eqref{claim} there exists $(x_n) \subseteq B(x_0,R)$ such that $x_n \to \tilde{x}$ and $u_n(x_n)\to +\infty$. So, we need to prove $\tilde{x}=x_0$. Indeed, $\tilde{x} \in S \cap \overline{B(x_0,R)}\subseteq \Sigma_{\mu} \cap \overline{B(x_0,R)}$, implying  $x_0=\tilde{x}$, since 
$
    \Sigma_{\mu} \cap \overline{B(x_0,R)}=\{x_0\}
$.
\end{proof}
In order to show that $S=\emptyset$, we will need the following estimate from below of the Green function.
\begin{lemma}\label{Green Below} For each $x_0 \in B$, there exists $\rho>0$ such that every $x \in B(x_0,\rho)\setminus \{x_0\}$ satisfies
\begin{equation*}
    G(x,x_0) \geq \frac{1}{8\pi^2}\log\frac{1}{|x-x_0|}-C_{x_0}, \quad \text{ where } C_{x_0}\coloneq \frac{1}{8\pi^2}\left(\frac{1}{2}-\log \frac{1-|x_0|^2}{2}\right).
\end{equation*}
\end{lemma}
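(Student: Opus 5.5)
The estimate will follow directly from the closed form \eqref{Green2}. With $y=x_0$,
\[
G(x,x_0)=\frac{1}{8\pi^2}\left[\log\frac{1}{|x-x_0|}+\log[XX_0]-\frac12\left(1-\frac{|x-x_0|^2}{[XX_0]^2}\right)\right].
\]
The bracketed term $-\tfrac12(1-|x-x_0|^2/[XX_0]^2)$ is at least $-\tfrac12$, since $0\le |x-x_0|^2/[XX_0]^2\le 1$ by \eqref{Green3}. So it suffices to find $\rho>0$ such that
\[
[XX_0]\ge \frac{1-|x_0|^2}{2}\qquad \text{for all } x\in B(x_0,\rho).
\]
Then $\log[XX_0]\ge \log\frac{1-|x_0|^2}{2}$, and combining the two lower bounds gives exactly the constant $C_{x_0}=\frac{1}{8\pi^2}\bigl(\frac12-\log\frac{1-|x_0|^2}{2}\bigr)$.

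The lower bound on $[XX_0]$ is obtained as in the proof of Lemma~\ref{dist property}, where it was already shown for $y$ near $x$. Here the roles are swapped, but $[XY]$ is symmetric in $x,y$. Write $x=x_0+h$ with $|h|<\rho$. The computation there gives
\[
[XX_0]^2=(1-|x_0|^2)^2+2\langle x_0,h\rangle(|x_0|^2-1)+|x_0|^2|h|^2\ge c_{x_0}-2\sqrt{c_{x_0}}\,|x_0|\,|h|,
\]
with $c_{x_0}=(1-|x_0|^2)^2$. Choose $\rho$ as follows: if $x_0\ne0$, take $\rho\le \frac{1-|x_0|^2}{4|x_0|}$; if $x_0=0$, take any $\rho$, since then $[XX_0]=1$. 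Also require $\rho\le 1-|x_0|$ so that $B(x_0,\rho)\subset B$. With this choice $[XX_0]^2\ge c_{x_0}/2$, hence
\[
[XX_0]\ge \frac{1-|x_0|^2}{\sqrt2}\ge \frac{1-|x_0|^2}{2}.
\]

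No step is a real obstacle. The only care needed is the sign bookkeeping in the expansion of $[XX_0]^2$ (the Cauchy--Schwarz bound on $\langle x_0,h\rangle$) and keeping $\rho$ small enough that $x\in B$. The conclusion then follows by adding the two lower bounds.
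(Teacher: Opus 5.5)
Your proof is correct and follows the paper's argument. Like the paper, you discard the nonnegative term $\frac12|x-x_0|^2/[XX_0]^2$ and bound $[XX_0]$ from below by $\frac{1-|x_0|^2}{2}$ near $x_0$. The only difference is that the paper obtains this last bound from continuity of $x\mapsto[XX_0]$ at $x_0$ (where $[X_0X_0]=1-|x_0|^2$), whereas you make $\rho$ explicit by reusing the expansion from Lemma~\ref{dist property}; both are valid.
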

\begin{proof}
Recalling \eqref{Green2}, \eqref{Green3}, and
\begin{equation*}
    -\frac{1}{2}\left(1-\frac{|x-y|^2}{[XY]^2}\right)=-\frac{1}{2}+\frac{1}{2}\frac{|x-y|^2}{[XY]^2}\geq-\frac{1}{2},
\end{equation*}
we have
\begin{equation}\label{Green Below1}
    G(x,y)=\frac{1}{8\pi^2}\left[\log\frac{[XY]}{|y-x|}-\frac{1}{2}\left(1-\frac{|y-x|^2}{[XY]^2}\right)\right] \geq \frac{1}{8\pi^2}\left(\log \frac{[XY]}{|x-y|}-\frac{1}{2}\right).
\end{equation}
Now, fix $x_0 \in B$ and observe that $[X_0X_0]=1-|x_0|^2>0$. By the continuity of the map $x\mapsto [XX_0]$, there exists $\rho>0$ such that for every $0<|x-x_0|<\rho$ we have $[XX_0]\geq (1-|x_0|^2)/2$. This inequality and \eqref{Green Below1} with $y=x_0$ yield
\begin{equation*}
    G(x,x_0) \geq \frac{1}{8\pi^2} \left( \log \frac{1-|x_0|^2}{2|x-x_0|} - \frac 1 2 \right) = \frac{1}{8\pi^2}\log\frac{1}{|x-x_0|}-C_{x_0}\quad \forall x \in B \cap B(x_0,\rho).
\end{equation*}
\end{proof}
\begin{theorem}\label{a priori crit} Suppose that \eqref{f1}, \eqref{f2}, and \eqref{f3} hold. Then $S=\emptyset$.
\end{theorem}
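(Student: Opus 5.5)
Assume by contradiction that $S\neq\emptyset$ and pick $x_0\in S$. By Lemma \ref{relation of regular and blowup sets}, $x_0\in\Sigma_\mu$, so Remark \ref{non-regular points finite} gives $\mu(\{x_0\})\ge 32\pi^2$. By Lemma \ref{Linfinity bound near the boundary} the $u_n$ are uniformly bounded on a neighbourhood $\omega$ of $\partial B$, so $x_0\in B\setminus\overline\omega$, at positive distance from the boundary. The plan is the analogue of the Brezis--Merle/de Figueiredo--do \'O--Ruf contradiction: a Dirac mass of weight at least $32\pi^2$ in $\mu$ forces $u_n$ to behave, in the limit, like $32\pi^2G(\cdot,x_0)\sim 4\log\frac{1}{|x-x_0|}$. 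The lower bound $f\ge De^t$ in \eqref{f1} then makes $\int f(x,u_n)$ diverge, which contradicts Theorem \ref{uniform L11}.

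\textbf{Step 1 (limit of $u_n$).} By the representation of Lemma \ref{solution}, $u_n(x)=\int_B G(x,y)f(y,u_n(y))\,\mathrm{d}y$. I would show that, up to a subsequence, $u_n\to u:=\int_B G(x,y)\,\mathrm{d}\mu(y)$ pointwise a.e. (or at least in $L^1_{\rm loc}$) away from $\Sigma_\mu$. To avoid using only weak-$\ast$ convergence against continuous functions, I would instead apply Fatou's lemma directly, which needs only the inequality
\[
\liminf_n u_n(x)\ \ge\ \int_B G(x,y)\,\mathrm{d}\mu(y)\quad\text{for every }x\in B.
\]
This holds because $G(x,\cdot)\ge0$ is lower semicontinuous. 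One truncates $G(x,\cdot)$ as $\min\{G(x,\cdot),k\}$, which is continuous on $\overline B$, multiplies by cutoffs $\psi_j\in C_c(B)$, and uses \eqref{measure convergence def}. Monotone convergence then finishes the argument, since $\mu$ has no mass near $\partial B$ beyond what is controlled; near the boundary $G(x,\cdot)$ vanishes anyway. Since $\mu\ge 32\pi^2\delta_{x_0}$, this yields $\liminf_n u_n(x)\ge 32\pi^2 G(x,x_0)$ for all $x\ne x_0$.

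\textbf{Step 2 (lower bound and divergence).} By Lemma \ref{Green Below}, for $0<|x-x_0|<\rho$,
\[
\liminf_n u_n(x)\ge 4\log\frac{1}{|x-x_0|}-32\pi^2C_{x_0}.
\]
At such points $u_n(x)\ge T$ for large $n$, so $f(x,u_n(x))\ge De^{u_n(x)}$. Fatou's lemma gives
\[
\liminf_n\int_{B(x_0,\rho)}f(x,u_n)\,\mathrm{d}x\ \ge\ De^{-32\pi^2C_{x_0}}\int_{B(x_0,\rho)}|x-x_0|^{-4}\,\mathrm{d}x=+\infty,
\]
since $|x|^{-4}$ is not integrable in $\mathds R^4$. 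This contradicts Theorem \ref{uniform L11} (note that \eqref{f1} implies \eqref{f0} with $p=1$). Hence $S=\emptyset$.

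\textbf{Main obstacle.} The delicate point is Step 1, namely justifying $\liminf u_n\ge\int G\,\mathrm{d}\mu$ from vague convergence when $G(x,\cdot)$ is singular and merely lower semicontinuous. The truncation argument with $\min\{G,k\}\psi_j$ should work, since each truncated kernel is a legitimate test function in $C_c(B)$. I would also check that Lemma \ref{Green Below} is only needed near $x_0$, which is exactly how it is used above. The critical exponent $4=32\pi^2/(8\pi^2)$ is precisely what produces the borderline non-integrable $|x-x_0|^{-4}$; this is why the sharp threshold $32\pi^2$ of Theorem \ref{B-M} matches the contradiction.
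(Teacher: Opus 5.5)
Your proof is correct and is essentially the paper's argument. You truncate $G(x,\cdot)$ by $\min\{G(x,\cdot),k\}$ times a cutoff and pass to the limit via \eqref{measure convergence def} and monotone convergence to get $\liminf_n u_n(x)\ge \mu(\{x_0\})\,G(x,x_0)\ge 32\pi^2G(x,x_0)$; you then combine Lemma~\ref{Green Below}, the lower bound in \eqref{f1} and the non-integrability of $|x-x_0|^{-4}$ to contradict Theorem~\ref{uniform L11}. The differences are cosmetic: you apply Fatou directly to $f(x,u_n)$, whereas the paper passes through the localized potentials $z_n$ and the bound $e^{z_n}\le e^T+f(x,u_n)/D$. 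One small fix: before asserting that $u_n(x)>T$ eventually, you should shrink $\rho$ so that $4\log\frac{1}{|x-x_0|}-32\pi^2C_{x_0}>T$ on $B(x_0,\rho)\setminus\{x_0\}$.
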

\begin{proof} Suppose, by contradiction, that there exists $x_0 \in S$. Since $x_0$ is isolated, we can take $R>0$ such that $\overline{B(x_0,R)} \cap (S \setminus \{x_0\})=\emptyset$. Suppose that $R>0$ is small enough so that $B(x_0,2R) \subset \subset B$ and consider $\eta_R \in C_c(B)$ satisfying
\begin{equation*}
    0\leq \eta_R \leq 1,\quad \eta_{R} \equiv 1\hspace{0.1cm}\text{in }\overline{B(x_0,R)}, \quad \mathrm{supp}\hspace{0.1cm}\eta_R\subseteq B(x_0,2R).
\end{equation*}
Now, for each $n \in \mathds{N}$, we consider the following problem
\begin{equation*}
\left\{
\begin{array}{rcll}
\Delta^{2} z_n &=& f(x,u_n)\eta_R\, & \text{in } B,\\
z_n=\partial_\nu z_n &=& 0 & \text{on } \partial B.
\end{array}
\right.
\end{equation*}
By the Green representation formula  \eqref{Green representation}, we have
\begin{equation*}
    z_n(x)=\int_{B} G(x,y)\eta_R(y)f(y,u_n(y))\,\mathrm{d}y,
\end{equation*}
and since $G,f \geq 0$ we see that $z_n \geq 0$ for every $n.$ Similarly, $u_n$ is a solution of \eqref{App}. thus
\begin{equation*}
    u_n(x)=\int_{B} G(x,y) f(y,u_n(y))\,\mathrm{d}y
\end{equation*}
and since $0 \leq \eta_R \leq 1$ we get
\begin{equation}\label{comp zn and un}
    0 \leq z_n(x)\leq u_n(x)\quad \text{in } B. 
\end{equation}
Next, fix $x \in B$, $k \in \mathds{N}$, and define
\begin{equation*}
    \phi_{x,k}(y)\coloneq \min\{G(x,y),k\}.
\end{equation*}
We observe that $\phi_{x,k} \in C(\overline{B})$, $0 \leq  \phi_{x,k}\leq G(x,\cdot)$, and $\eta_R \phi_{x,k}\in C_c(B)$. Then, we can apply \eqref{measure convergence def} with $\varphi = \eta_R \phi_{x,k}$ to obtain
\begin{equation}\label{measure limit}
    \int_{B} \eta_R(y)\phi_{x,k}(y)f(y,u_n(y))\,\mathrm{d}y\to \int_{B} \eta_R(y)\phi_{x,k}(y)\,\mathrm{d}\mu.
\end{equation}
By $0 \leq \phi_{x,k}\leq G(x,\cdot)$ we can also derive
\begin{equation}\label{liminf victim}
    z_n(x)=\int_{B} G(x,y)\eta_R(y)f(y,u_n(y))\,\mathrm{d}y\geq \int_{B} \phi_{x,k}(y)\eta_R(y)f(y,u_n(y))\,\mathrm{d}y.
\end{equation}
Thus, passing to the $\liminf$ into \eqref{liminf victim} and using \eqref{measure limit}, we get
\begin{equation}\label{almost liminf}
    \liminf_{n \to +\infty} z_n(x)\geq \int_{B} \phi_{x,k}(y)\eta_R(y)\,\mathrm{d}\mu \quad \forall k \in \mathds{N}.
\end{equation}
The sequence of functions $(\phi_{x,k})_k$ is monotone non-decreasing and it converges to $G(x,\cdot)$; moreover, $\eta_R \geq 0$. Hence, we can let $k \to +\infty$ in \eqref{almost liminf} and apply the monotone convergence theorem to get
\begin{equation}\label{liminf}
    \liminf_{n \to +\infty}z_n(x) \geq z(x)\hspace{0.2cm}\forall x \in B,\quad \text{where}\quad z(x)\coloneq \int_{B} \eta_R(y) G(x,y)\,\mathrm{d}\mu.
\end{equation}
By \eqref{liminf}, the continuity and monotonicity of $t \mapsto e^{t}$, we see that
\begin{equation*}
    e^{z(x)}\leq \liminf_{n \to +\infty}e^{z_n(x)}.
\end{equation*}
Thus, by Fatou's Lemma
\begin{equation}\label{Fatou}
    \int_{B(x_0,R)} e^{z(x)}\,\mathrm{d}x\leq \liminf_{n \to +\infty}\int_{B(x_0,R)} e^{z_n(x)}\,\mathrm{d}x.
\end{equation}

Our next objective is to show that 
\begin{equation*}
    \int_{B(x_0,R)} e^{z(x)}\,\mathrm{d}x=+\infty.
\end{equation*}
Indeed, since $\eta_R \equiv 1$ in $B(x_0,R)$, we get
\begin{equation}\label{z estimate 1}
    z(x)=\int_{B} \eta_{R}(y)G(x,y)\,\mathrm{d}\mu \geq \int_{B(x_0,R)} G(x,y)\,\mathrm{d}\mu.
\end{equation}
Now, fix $x \neq x_0$ in  $B(x_0,R)$ and consider $r>0$ such that $\overline{B(x_0,r)}\subseteq B(x_0,R)$ and $x \not \in B(x_0,R)$. In this way, the map $y \mapsto G(x,y)$ is continuous in $\overline{B(x_0,r)}$ and we can safely define
\begin{equation*}
    m_r(x) \coloneq \min_{y \in \overline{B(x_0,r)}} G(x,y).
\end{equation*}
Thus, recalling that $G \geq 0$ and $\mu$ is a non-negative measure, we arrive at
\begin{align*}
    \int_{B(x_0,r)} G(x,y)\,\mathrm{d}\mu\geq m_r(x)\mu(B(x_0,r)),
\end{align*}
and from \eqref{z estimate 1} we get
\begin{equation*}
    z(x) \geq m_r(x)\mu(B(x_0,r)).
\end{equation*}
Letting $r \to 0^{+}$, we conclude that
\begin{equation*}
    z(x) \geq G(x,x_0) \mu(\{x_0\}).
\end{equation*}
Moreover, since $x_0 \in S$, we have $\mu(\{x_0\}) \geq 32\pi^2$. By \hyperref[Green Below]{Lemma \ref{Green Below}}, we see that there exists $\rho>0$ such that $\overline{B(x_0,\rho)}\subseteq B(x_0,R)$ and
\begin{equation*}
    z(x) \geq 32\pi^2\left(\frac{1}{8\pi^2}\log\frac{1}{|x-x_0|}-C_{x_0}\right)=4\log\frac{1}{|x-x_0|}-\tilde{C}_{x_0}\quad \forall x\in B(x_0,\rho)\setminus\{x_0\},
\end{equation*}
and hence
\begin{equation}\label{below z near x0}
    e^{z(x)}\geq e^{-\tilde{C}_{x_0}}|x-x_0|^{-4}\quad \forall x \in B(x_0,\rho)\setminus\{x_0\}.
\end{equation}
Thus
$$
    \int_{B(x_0,R)} e^{z(x)}\,\mathrm{d}x\geq e^{-\tilde{C}_{x_0}}\int_{B(x_0,\rho)} |x-x_0|^{-4}
    =e^{-\tilde{C}_{x_0}}2\pi^3\int_{0}^{\rho}\frac{1}{r}\,\mathrm{d}r
    =+\infty,
$$
which proves our claim and, from \eqref{Fatou}
\begin{equation*}
    \liminf_{n\to +\infty}\int_{B(x_0,R)} e^{z_n(x)}\,\mathrm{d}x =+\infty.
\end{equation*}

Finally, using \eqref{comp zn and un} and hypothesis \eqref{f1}, we see that
\begin{equation*}
    f(x,u_n(x))\geq De^{u_n(x)}\geq De^{z_n(x)}\quad \text{in}\quad \{x \in B(x_0,R): u_n(x)\geq T\}.
\end{equation*}
Then, we have
\begin{align*}
    e^{z_n(x)}&=e^{z_n(x)}\chi_{\{u_n<T\}}+e^{z_n(x)}\chi_{\{u_n\geq T\}}\\
    &\leq e^{T}+\frac{1}{D}f(x,u_n(x))\quad \forall\, x \in B(x_0,R).
\end{align*}
Thus, integrating over $B(x_0,R)$ and taking the $\liminf$ in the previous inequality, we obtain
\begin{equation*}
    e^{T}|B(x_0,R)|+\liminf_{n \to +\infty}\frac{1}{D} \int_{B(x_0,R)} f(x,u_n)\,\mathrm{d}x\geq \liminf_{n \to +\infty}\int_{B(x_0,R)}e^{z_n(x)}\,\mathrm{d}x=+\infty,
\end{equation*}
which is a contradiction with the uniform $L^1$ bound established in \hyperref[uniform L11]{Theorem \ref{uniform L11}}.
\end{proof}

\section{Existence result for the scalar problem}\label{existence}

This section is devoted to the proof of \hyperref[existence of sol]{Theorem \ref{existence of sol}}. The proof relies on topological methods and heavily exploits the structural condition \eqref{f5}, as well as the uniform a priori bounds obtained in \hyperref[a priori estimate2 final]{Theorem \ref{a priori estimate2 final}} for the subcritical regime, and in \hyperref[a priori estimate1 final]{Theorem \ref{a priori estimate1 final}} for the critical regime. For this reason, there is no need to distinguish between the two regimes in what follows.

Adopting a well-established approach in the literature (see for instance \cite{dFLN1982}, \cite{joaorufdjairo2}, and \cite{Mancini}), we apply the following theorem of Krasnosel'ski\u{\i}, presented here in the form given by \cite[Theorem 3.1]{Djairolecturenotes}.
\begin{theoremletter}\label{topological method}
Let $C \subseteq E$ denote a cone in a Banach space $E$, and let $T: C \to C$ be a compact mapping with $T(0)=0$.
Assume furthermore that there exist real numbers $0<r<R$ such that
\begin{enumerate}
\item $x \neq t\,T x$ for $0\le t\le 1$, $\|x\|=r$, $x\in C$,
\item there exists $H: C\times[0,\infty)\to C$ such that
  \begin{enumerate}
  \item[(a)] $H_t:C \to C$ defined as $H_t(x) \coloneq H(x,t)$ is compact for every fixed $t \geq0$ and $H(x,0)=Tx$, \quad $x\in C$,
  \item[(b)] $H(x,t)=x$ has no solution for $x\in \overline{B}_R$, $t\geq t_0$,
  \item[(c)] $H(x,t)\neq x$, \quad $\|x\|=R$, $t\geq 0$,
  \end{enumerate}
\end{enumerate}
Then \emph{1.} implies $i_C(T,B_r)=1$, and \emph{2.} implies $i_C(T,B_R)=0$.
\end{theoremletter}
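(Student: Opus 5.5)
The plan is to derive both conclusions from the standard axioms of the fixed point index $i_C(\cdot,U)$ for compact maps on relatively open bounded subsets $U$ of a cone $C$, a closed convex set and hence a retract of $E$. The three properties I will use are the following.
\begin{enumerate}
\item \emph{Normalization:} if $F$ is constant with value $x_0 \in U$, then $i_C(F,U)=1$.
\item \emph{Homotopy invariance:} if $h:\overline U\times[a,b]\to C$ is compact and $h(x,s)\neq x$ for all $x\in\partial U$ and $s\in[a,b]$, then $i_C(h(\cdot,s),U)$ does not depend on $s$.
\item \emph{Existence property:} if $i_C(F,U)\neq 0$, then $F$ has a fixed point in $U$.
\end{enumerate}
Here $B_\rho$ denotes $\{x\in C:\|x\|<\rho\}$, whose relative boundary is contained in $\{\|x\|=\rho\}$.

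\emph{Step 1: hypothesis 1 gives $i_C(T,B_r)=1$.} Consider the homotopy $h(x,t)=tT(x)$ for $t\in[0,1]$. It maps into $C$ because $C$ is a cone. It is compact, since $T(\overline{B_r})$ is relatively compact and so is $[0,1]\cdot T(\overline{B_r})$. Hypothesis 1 says exactly that $h(\cdot,t)$ has no fixed point with $\|x\|=r$. By homotopy invariance, $i_C(T,B_r)=i_C(0,B_r)$. The constant map $0$ takes its value in $B_r$ because $r>0$, so normalization gives index $1$. The assumption $T(0)=0$ plays no role here; it only ensures that $0$ is a trivial fixed point, which is why one works with the annulus.

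\emph{Step 2: hypothesis 2 gives $i_C(T,B_R)=0$.} Restrict $H$ to $\overline{B_R}\times[0,t_0]$. By (c), no $H_t$ with $t\in[0,t_0]$ has a fixed point on $\|x\|=R$. Homotopy invariance then gives $i_C(T,B_R)=i_C(H_0,B_R)=i_C(H_{t_0},B_R)$. By (b), $H_{t_0}$ has no fixed point in $\overline{B_R}$. By the contrapositive of the existence property, $i_C(H_{t_0},B_R)=0$.

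\emph{Main obstacle.} The only delicate point is that homotopy invariance needs $H$ to be compact \emph{jointly} on $\overline{B_R}\times[0,t_0]$: continuous in $(x,t)$, with relatively compact image. As literally stated, (a) only asks that each slice $H_t$ be compact. I would read (a) in the sense of \cite{Djairolecturenotes}, namely that $H$ is a compact map on $C\times[0,\infty)$, or at least on bounded sets of the form $\overline{B_R}\times[0,t_0]$. This is satisfied in the intended application, where $H(u,t)=T(u+t)$ or a similar construction. With that reading, the argument reduces entirely to the index axioms above.
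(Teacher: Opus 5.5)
The paper gives no proof of Theorem~\ref{topological method}; it quotes the result from \cite[Theorem 3.1]{Djairolecturenotes}. Your proposal is correct, and deriving it from normalization, homotopy invariance and the existence property of the cone index is the standard argument behind that result.

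Your caveat about (a) is essential rather than pedantic. With only slice-wise compactness, the statement as printed is false. Take $E=\mathds{R}$, $C=[0,\infty)$, $T\equiv 0$, $r=1$, $R=2$, $t_0=1$, and set $H(x,0)=0$ and $H(x,t)=x+1$ for $t>0$. All the hypotheses hold, yet $i_C(T,B_R)=1$.

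What is needed is joint compactness of $H$ on $\overline{B}_R\times[0,t_0]$, as in your reading. This does hold for the homotopy $H(u,t)=T(u+t)$ used in Section~\ref{existence}.
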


We observe that $i_C(T,\mathcal{O})$ is the fixed point index of $T$ in $C$ with respect to the (relatively) open set $\mathcal{O}$ (see \cite{Djairolecturenotes}). Note that it then follows by the excision property that $T$ has a fixed point in $B_R\setminus \overline{B}_r$.

For each $f \in C(\overline{B})$, from \hyperref[solution]{Lemma \ref{solution}}, there exists a unique very weak solution $u_f$ of \eqref{P}, given by
\begin{equation*}
    u_f(x)=\int_{B} G(x,y)f(y)\,\mathrm{d}y.
\end{equation*}
Moreover, as a consequence of \hyperref[regularity1]{Theorem \ref{regularity1}}, we see that $u_f \in C(\overline{B})$ and, from \hyperref[estimates]{Lemma \ref{estimates}} and \eqref{eq:Hardy-Littlewood}, we get
\begin{align*}
    |u_f(x)|&\leq \int_{B} G(x,y)|f(y)|\,\mathrm{d}y
    \leq \frac{\lVert f\rVert_{C(\overline{B})}}{8\pi^2}\int_{B}\log \frac{2}{|x-y|}\,\mathrm{d}y
    \leq \frac{\lVert f\rVert_{C(\overline{B})}}{8\pi^2}\int_{B} \frac{2}{|x-y|}\,\mathrm{d}y\\
    &\leq \frac{\lVert f\rVert_{C(\overline{B})}}{4\pi^2}\int_{B} \frac{1}{|y|}\,\mathrm{d}y
    =\frac{1}{6}\lVert f \rVert_{C(\overline{B})},
\end{align*}
that is, 
\begin{equation*}
    \lVert u_f \rVert_{C(\overline{B})}\leq \frac{1}{6}\lVert f\rVert_{C(\overline{B})}.
\end{equation*}
From the above considerations, we can consider a map $(\Delta^2)^{-1}:C(\overline{B})\to C(\overline{B})$, where for each $f \in C(\overline{B}$), $(\Delta^2 )^{-1}(f)$ is defined as the unique solution of \eqref{P}. With this definition, we clearly have that
\begin{equation}\label{InverseBiharmonic}
    \lVert (\Delta^2)^{-1}(f)\rVert_{C(\overline{B})}\leq \frac{1}{6}\rVert f\rVert_{C(\overline{B})}\quad \forall f \in C(\overline{B}).
\end{equation}

For the existence result, we require the additional hypothesis \eqref{f5}.
The proof of the existence result is inspired by de Figueiredo, do \'O, and Ruf \cite{joaorufdjairo2} and Mancini and Romani \cite{Mancini}.

\begin{proof}[Proof of Theorem \ref{existence of sol}] Consider the closed cone
\begin{equation*}
    C \coloneq \left\{u \in C(\overline{B}): u \geq0\hspace{0.1cm}\mathrm{in}\hspace{0.1cm}\overline{B}\right\}
\end{equation*}
and define the map $T:C \to C$ given by
\begin{equation*}
    T(u)(x)\coloneq (\Delta^2)^{-1}f(x,u)=\int_{B} G(x,y)f(y,u(y))\,\mathrm{d}y.
\end{equation*}
We observe that $T(C)\subseteq C$ because $f \geq 0$ and $G\geq 0$. Moreover, by \eqref{f5} we have $f(x,0)=0$ for every $x \in \overline{B}$, demonstrating that $T(0)=0$. By standard arguments involving the Green representation formula, regularity theory, and Ascoli-Arzel\`a theorem, $T$ is a compact operator. Next, we prove that the assumption \emph{1.}  of \hyperref[topological method]{Theorem \ref{topological method}} is satisfied.

\textbf{Proof of \emph{1.}}
Choose $ r \in (0,\varepsilon_0)$, where $\varepsilon_0$ is defined in $\eqref{f5},$ and suppose by contradiction that there exist $u \in C$, $\lVert u \rVert_{C(\overline{B})}=r$ and $ t \in [0,1]$ such that $u=tTu$. Then,
\begin{equation*}
    u=t(\Delta^2)^{-1}f(x,u),
\end{equation*}
and, taking the supremum over $\overline{B}$ and using $\eqref{f5}$, we get
\begin{equation*}
    \lVert u \rVert_{C(\overline{B})}\leq\frac{1}{6}\lVert f(x,u)\rVert_{C(\overline{B})}\leq \frac{\eta}{6}\lVert u\rVert_{C(\overline{B})}.
\end{equation*}
By assumption \eqref{f5}, $\eta/6<1$ and we reach a contradiction. Therefore, we conclude that $i_C(T, B_r)=1.$

\medskip

It remains to verify that the assumptions \emph{2.(a)-2.(c)} of \hyperref[topological method]{Theorem \ref{topological method}} are also satisfied. To this end, we introduce the homotopy $H: C \times [0,+\infty) \to C$ defined by 
$$H(u,t)\coloneq (\Delta^2)^{-1}f(x,u+t).$$ 

\textbf{Proof of \emph{2.(a).}} Since $G \geq 0$, we see that $H(C \times [0,+\infty))\subseteq C$. Moreover, we have $H(u,0)=(\Delta^2)^{-1}f(x,u)=Tu$ for every $u \in C$. Finally, by the same classical techniques mentioned earlier to justify the compactness of the operator $T$, $H_t$ is compact for each fixed $t \geq 0$.

\textbf{Proof of \emph{2.(b).}} Let $a,\varepsilon_1>0$ such that $\lambda_1+\varepsilon_1<a$. By $\eqref{f2}$ and \eqref{f2 implication}, there exist $C_0>0$ such that
\begin{equation}\label{final relation 1}
    f(x,s)\geq (\lambda_1+\varepsilon_1)s-C_0\quad \forall s\geq 0.
\end{equation}
We will prove that for each $u \in C$, there exists $t_0>0$ such that $H(u,t)=u$ has no solution for every $ t \geq t_0.$ Indeed, since $H(u,t)=u$ and arguing as in the proof of \hyperref[regularity1]{Theorem \ref{regularity1}}, we have that in particular $u$ is a weak solution of
\begin{equation*}
    \Delta^2 u =f(x,u+t)\quad \text{in}\quad B\quad \text{and}\quad u=\partial_{\nu}u=0\quad \text{on}\quad \partial B.
\end{equation*}
In this way,
\begin{equation}\label{final weak formulation}
    \int_{B}\Delta u\Delta \varphi\,\mathrm{d}x=\int_{B}f(x,u+t)\varphi\,\mathrm{d}x\quad \forall \varphi \in H_0^{2}(B).
\end{equation}
Now, consider $\varphi_1>0$ the first eigenfunction associated with the first eigenvalue $\lambda_1>0$ of $(\Delta^2, H_0^{2}(B)),$ that is,
\begin{equation}\label{phi1 relation}
    \int_{B}\Delta \varphi_1\Delta \psi\,\mathrm{d}x=\lambda_1\int_{B}\varphi_1\psi\,\mathrm{d}x\quad \forall \psi \in H_0^{2}(B).
\end{equation}
If we choose $\varphi =\varphi_1$ in \eqref{final weak formulation}, $\psi=u$ in \eqref{phi1 relation}, and $s=u+t$ in \eqref{final relation 1}, we obtain
\begin{equation*}
    \lambda_1\int_{B} u\varphi_1\,\mathrm{d}x=\int_{B} f(x,u+t)\varphi_1\,\mathrm{d}x \geq (\lambda_1+\varepsilon_1)\int_{B}(u+t)\varphi_1\,\mathrm{d}x-C_0\int_{B}\varphi_1\,\mathrm{d}x,
\end{equation*}
and, rearranging the terms,
\begin{equation*}
    \lambda_1\int_{B} u \varphi_1\,\mathrm{d}x \geq (\lambda_1+\varepsilon_1)\int_{B}u\varphi_1\,\mathrm{d}x+(\lambda_1+\varepsilon_1)t\int_{B}\varphi_1\,\mathrm{d}x-C_0\int_{B}\varphi_1\,\mathrm{d}x.
\end{equation*}
Thus
\begin{equation*}
    0 \geq -\varepsilon_1\int_{B}u\varphi_1\,\mathrm{d}x \geq (\lambda_1+\varepsilon_1)t\int_{B}\varphi_1\,\mathrm{d}x-C_0\int_{B}\varphi_1\,\mathrm{d}x,
\end{equation*}
which implies that
\begin{equation*}
    t \leq \frac{C_0}{\lambda_1+\varepsilon_1}.
\end{equation*}
The proof of \emph{2.(b)} is complete by choosing $t_0\coloneq 1+C_0/(\lambda_1+\varepsilon_1).$

\textbf{Proof of \emph{2.(c).}} Fix $t_0>0$ as in \emph{2.(b)}, so that $H(u,t)=u$ has no solution for all $t\geq t_0$.  
We prove that there exists $R>0$ such that
\[
H(u,t)\neq u\quad\forall\, u\in C,\ \|u\|_\infty=R,\ \text{and }\forall\,t\ge0.
\]

Since by \emph{2.(b)}, independently of $R>0$, there are no fixed points for $t\ge t_0$, it suffices to find $R>0$ such that there is no fixed points in the sphere $\|u\|_\infty=R$ for $t\in[0,t_0]$. To achieve our goal, for each $t\in[0,t_0],$ define
\[
f_t(x,s):=f(x,s+t),\qquad x\in\overline B,\ s\ge0.
\]
It is not hard to check that assumptions \eqref{f2}-\eqref{f3} (and also $\eqref{f1'}$ or \eqref{f1}) are inherited by $f_t$ with constants independent of $t\in[0,t_0]$.
Next, consider the family of fixed point equations
\[
u=H(u,t),\quad t\in[0,t_0],
\]
equivalently,
\[
\Delta^2 u = f(x,u+t)\ \text{in }B,\qquad u=\partial_\nu u=0\ \text{on }\partial B,\qquad u\ge0.
\]
Then, the a priori estimate proved for the original problem
applies uniformly along the family of non-linearities $f_t$ with $t \in[0,t_0]$ (see \hyperref[a priori estimate1 final]{Theorem \ref{a priori estimate1 final}} and \hyperref[a priori estimate2 final]{Theorem \ref{a priori estimate2 final}}), that is, there exists a constant $\mathcal C=\mathcal C(t_0)>0$ such that any solution
$u$ of $u=H(u,t)$ with $t\in[0,t_0]$ satisfies $\|u\|_\infty\le \mathcal C.$ Finally, setting $R:=\mathcal C+1,$ we see that there is no fixed point on the sphere $\|u\|_\infty=R$ for any $t\in[0,t_0]$. This, together with \emph{2.(b)} (no solutions for $t\ge t_0$), yields \emph{2.(c)}.

\medskip

Since all the assumptions of  \hyperref[topological method]{Theorem \ref{topological method}} are satisfied, $T$ has a fixed point $u\in C$ in the annulus $B_R\setminus\overline{B}_r$, that is
\[
u=T(u)=(\Delta^2)^{-1}\big(f(\cdot,u)\big),
\qquad r<\|u\|_\infty<R.
\]
In particular, $u\ge0$, $u\not\equiv0$, and by \hyperref[regularity1]{Theorem \ref{regularity1}} $u \in C^{3}(\overline{B})$ is a  solution of the Dirichlet biharmonic problem
\[
\Delta^2 u=f(x,u)\ \text{in }B,\qquad u=\partial_\nu u=0\ \text{on }\partial B.
\]
Moreover, since $u \not \equiv 0$, it follows that $f(x,u)\not \equiv 0$, because the only solution of
\begin{equation*}
    \Delta^2 u=0 \quad \text{in} \quad B \quad \text{and \quad } u=\partial_{\nu}u=0\quad \text{on}\quad \partial B 
\end{equation*}
is $u\equiv0.$ Thus, by the positivity of $G(x,y)$, we conclude for every $x \in B$, that
\begin{equation*}
    u(x)=\int_{B}G(x,y)f(y,u(y))\,\mathrm{d}y>0.
\end{equation*}
\end{proof}

\begin{remark} If we assume in \hyperref[existence of sol]{Theorem \ref{existence of sol}} that $f$ satisfies additionally conditions \eqref{eq:Holderx} and \eqref{eq:Lipschitzt}, we can conclude by \hyperref[rem:C4alpha]{Corollary \ref{rem:C4alpha}} that $u$ is a classical solution of problem \eqref{App}, precisely, $u \in C^4(\overline{B}).$
\end{remark}

\section{Generalization to non-variational cooperative systems}\label{system generalization}

The analysis that we developed for the scalar case extends to systems of the form
\begin{equation}\label{system-problem}\tag{S}
\begin{cases}
\Delta^2 u=g(x,v) & \text{in }B,\\
\Delta^2 v=f(x,u) & \text{in }B,\\
u=\partial_\nu u=0 & \text{on }\partial B,\\
v=\partial_\nu v=0 & \text{on }\partial B,\\
u>0,\quad v>0 & \text{in }B,
\end{cases}
\end{equation}
where $f,g:\overline{B}\times \mathds{R}^{+}\to \mathds{R}^{+}$ are continuous. A pair \((u,v)\in L^1(B)\times L^1(B)\) is a very weak solution
of \eqref{system-problem} if
\(
        f(x,u),\,g(x,v)\in L^1(B)
\)
and
\[
        \int_B u\,\Delta^2\varphi\,\mathrm{d}x
        =
        \int_B g(x,v)\varphi\,\mathrm{d}x,
        \qquad
        \int_B v\,\Delta^2\varphi\,\mathrm{d}x
        =
        \int_B f(x,u)\varphi\,\mathrm{d}x,\quad \forall\, \varphi \in C^4_{\mathrm{cl}}(\overline{B}).
\]

We establish uniform a priori bounds for the system \eqref{system-problem} with nonlinearities in two relevant growth scenarios. In the first result, we deal with nonlinearities which are Brezis-Merle subcritical:
\begin{equation}\label{system subcritical growth condition}
\exists \alpha,\beta \in (0,1), \: \exists c>0 \: \text{ such that } \: f(x,t) \leq c e^{t^{\alpha}}\hspace{0.2cm} \text{and}\hspace{0.2cm}g(x,t)\leq c e^{t^{\beta}}\quad \forall x \in B, \: t\geq 0.
\end{equation}
In the second result, we tackle Brezis-Merle critical exponential nonlinearities:
\begin{align} \label{f0System}
\exists c>0  \text{ such that }      &f(x,t) \leq c e^{t} \; \text{ and } \;  g(x,t) \leq c e^{t} \quad \forall x \in B, t \geq 0,\\ \label{system inferior growth condition}
\exists d,T>0 \text{ such that }  & de^{t}\leq  f(x,t) \; \text{ and } \; de^{t}\leq  g(x,t) \quad \forall x \in B,  \: t\geq T.
\end{align}

The above conditions \eqref{system subcritical growth condition}, \eqref{f0System}, and \eqref{system inferior growth condition} are the natural counterparts of the assumptions required throughout the previous sections to study the scalar case: \eqref{system subcritical growth condition} is equivalent to \eqref{f1'} and combining \eqref{f0System} and \eqref{system inferior growth condition} yields the system counterpart of \eqref{f1}. In particular, \eqref{f0System} is exactly \eqref{f0} with $p=1$. Therefore, the study of the non-variational cooperative system \eqref{system-problem} follows the
same general strategy developed for the scalar problem. In this section, we explain how a careful
componentwise adaptation of the scalar arguments in the previous sections extends to \eqref{system-problem}, in the spirit of the
two-dimensional elliptic systems studied in \cite{criticalandsubcritical} and \cite{joaorufdjairo2}.

First, we discuss the regularity step inspired by \hyperref[regularity1]{Theorem \ref{regularity1}} and \hyperref[rem:C4alpha]{Corollary \ref{rem:C4alpha}}. We begin by observing that condition \eqref{f0} is required for only one of the nonlinearities, $f$ or $g$. Indeed, without loss of generality, we consider the case in which this assumption is imposed on $f(x,t)$.

Since, by the definition of very weak solution $g(\cdot,v) \in L^{1}(B)$, we may apply \hyperref[integrability]{Corollary \ref{integrability}} to the Dirichlet biharmonic problem
\[
\begin{cases}
\Delta^{2}u=g(x,v) & \text{in } B,\\
u=\partial_{\nu}u=0 & \text{on } \partial B.
\end{cases}
\]
This gives $\int_{B}e^{ku}\,\mathrm{d}x<+\infty$ for every $k>0$. The growth assumption \eqref{f0} imposed on $f$ then ensures that $f(\cdot, u) \in L^{k}(B)$ for every $k>1.$ Hence, studying the problem
\[
\begin{cases}
\Delta^{2}v=f(x,u) & \text{in } B,\\
v=\partial_{\nu}v=0 & \text{on } \partial B,
\end{cases}
\]
in the same way as in the proof of \hyperref[regularity1]{Theorem \ref{regularity1}},
we obtain $v \in C^{3,\gamma}(\overline{B})\cap L^{\infty}(B).$

Since $v$ is sufficiently smooth and $g:\overline{B}\times \mathds{R}^{+}\to \mathds{R}^{+}$ is continuous, the composition $g(x,v)$ belongs to $C(\overline{B})$ and, in particular,  to $L^{k}(B)$ for every $k>1$. Therefore, proceeding once more as in \hyperref[regularity1]{Theorem \ref{regularity1}} with the problem
\[
\begin{cases}
\Delta^{2}u=g(x,v) & \text{in } B,\\
u=\partial_{\nu}u=0 & \text{on } \partial B,
\end{cases}
\]
we obtain $u \in C^{3,\gamma}(\overline{B})\cap L^{\infty}(B)$. Additionally, if we assume that both $f$ and $g$ satisfy \eqref{eq:Holderx} and \eqref{eq:Lipschitzt}, we obtain classical regularity as in \hyperref[rem:C4alpha]{Corollary \ref{rem:C4alpha}}. This leads to the following result.

\begin{theorem}\label{regularity system} Assume that either $f(x,t)$ or $g(x,t)$ satisfies \eqref{f0}. Then every very weak solution $(u,v)$ of \eqref{system-problem} satisfies $(u,v)\in(C^{3,\gamma}(\overline{B}) \cap L^{\infty}(B))^{2}$ for some $0<\gamma<1$. If we additionally assume that both  $f(x,t)$ and $g(x,t)$ satisfy  \eqref{eq:Holderx} and \eqref{eq:Lipschitzt} with the same $0< \gamma <1$, then  $(u,v)\in C^{4,\gamma}(\overline B)\times C^{4,\gamma}(\overline{B})$. Hence, in particular $(u,v)\in C^4(\overline B) \times C^4(\overline B)$.
\end{theorem}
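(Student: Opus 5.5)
The argument is a two-step bootstrap through the system, following the discussion just before the statement. By symmetry of the roles of $(u,f)$ and $(v,g)$, we may assume that \eqref{f0} holds for $f$, with constants $c,p>0$.

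First, since $(u,v)$ is a very weak solution, $g(\cdot,v)\in L^1(B)$. Hence $u$ is a very weak solution of the linear problem \eqref{P} with datum $g(\cdot,v)$. By \hyperref[integrability]{Corollary \ref{integrability}}, $e^{k|u|}\in L^1(B)$ for every $k>0$. Combining this with \eqref{f0},
\[
\int_B f(x,u)^r\,\mathrm{d}x\le c^r\int_B e^{rpu}\,\mathrm{d}x<\infty \quad \text{for every } r\ge 1.
\]

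Second, we argue for $v$ exactly as in the proof of \hyperref[regularity1]{Theorem \ref{regularity1}}. Let $\tilde v\in W^{4,r}(B)\cap W^{2,r}_0(B)$ be the strong solution of the Dirichlet problem with datum $f(\cdot,u)$ given by \cite[Corollary 2.21]{Gazzola}. Then $v-\tilde v$ is a very weak solution of the homogeneous problem, so it vanishes by the argument of \cite[Lemma 1]{BrezisCazenave}. Choosing $r>4$ large, Morrey's embedding gives $v\in C^{3,\gamma}(\overline B)$.

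Third, $v$ is continuous on $\overline B$ and $g$ is continuous, so $g(\cdot,v)\in C(\overline B)\subset L^r(B)$ for every $r$. Repeating the same strong-solution and uniqueness argument for $u$ yields $u\in C^{3,\gamma}(\overline B)\cap L^\infty(B)$. We may take the same $\gamma$ for both components by choosing the smaller exponent, or simply fix one large $r$ in both steps.

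For the $C^{4,\gamma}$ statement we follow \hyperref[rem:C4alpha]{Corollary \ref{rem:C4alpha}}. Set $M=\max\{\|u\|_\infty,\|v\|_\infty\}$. Using \eqref{eq:Holderx} and \eqref{eq:Lipschitzt} for $f$ on $[0,M]$ together with the $\gamma$-Hölder continuity of $u$, we get $f(\cdot,u)\in C^{0,\gamma}(\overline B)$; in the same way $g(\cdot,v)\in C^{0,\gamma}(\overline B)$. The Schauder estimates of \cite[Theorem 2.19]{Gazzola} for the Dirichlet biharmonic problem, together with uniqueness, then give $u,v\in C^{4,\gamma}(\overline B)$.

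The only delicate point is the first step: the exponential integrability of $u$ must come from the linear Brezis--Merle theory alone, since no growth condition is imposed on $g$. This is exactly what \hyperref[integrability]{Corollary \ref{integrability}} supplies using only $g(\cdot,v)\in L^1$. Everything after that is routine elliptic regularity.
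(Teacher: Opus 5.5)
Your proposal is correct and follows essentially the same route as the paper. The paper likewise applies Corollary \ref{integrability} to the $u$-equation using only $g(\cdot,v)\in L^1(B)$, uses \eqref{f0} on $f$ to get $f(\cdot,u)\in L^r(B)$ for all $r$, and runs the argument of Theorem \ref{regularity1} for $v$; it then bootstraps $u$ via continuity of $g(\cdot,v)$ and handles the $C^{4,\gamma}$ part componentwise as in Corollary \ref{rem:C4alpha}.
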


The uniform \(L^1\) estimate is also obtained by reproducing the scalar
argument  in \hyperref[subsec4.2]{Subsection \ref{subsec4.2}} simultaneously for the two components. We first indicate the counterpart of \hyperref[moving 6]{Lemma \ref{moving 6}} for systems of the form \eqref{system-problem}. 
\begin{lemma}\label{moving 6 system}Assume that either $f$ or $g$ satisfies \eqref{f0} and that both $f$ and $g$  satisfy \eqref{f3}. Then there exists a neighborhood $\omega$ of $\partial B$ and $\delta>0$ such that  every very weak solution $(u,v)$ of \eqref{system-problem} satisfies
$$\langle \nabla u(x),\theta\rangle\leq 0\quad \text{and}\quad \langle \nabla v(x),\theta \rangle \leq0,$$
for all  $x \in \omega$ and for all unitary vectors $\theta \in \mathds{R}^4$ such that $|\theta-\nu(x)|<\delta.$
\end{lemma}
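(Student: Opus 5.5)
The plan is to reduce Lemma \ref{moving 6 system} to the scalar moving-plane argument of \cite[Lemma~2.5]{Mancini}, which we already used for Lemma \ref{moving 6}, run now simultaneously on both components.

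\textbf{Regularity.} By Theorem \ref{regularity system}, every very weak solution $(u,v)$ belongs to $(C^{3,\gamma}(\overline B))^2$. This is the regularity the pointwise moving-plane comparison needs, and it also gives the Green representations
\[
u(x)=\int_B G(x,y)\,g(y,v(y))\,\mathrm{d}y,\qquad v(x)=\int_B G(x,y)\,f(y,u(y))\,\mathrm{d}y
\]
via Lemma \ref{solution}.

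\textbf{Setup of the moving planes.} For a direction $\theta$ close to $\nu(x)$, we consider hyperplanes $T_\lambda=\{\langle y,\theta\rangle=\lambda\}$ with $\lambda$ close to $1$. Let $\Sigma_\lambda=\{y\in B:\langle y,\theta\rangle>\lambda\}$ be the cap and $y^\lambda$ the reflection of $y$ across $T_\lambda$. Set
\[
w_\lambda(y)=u(y^\lambda)-u(y),\qquad z_\lambda(y)=v(y^\lambda)-v(y),\qquad y\in\Sigma_\lambda .
\]
Following \cite{Berchio,Mancini}, we split each integral over $B$ into $\Sigma_\lambda$, its reflection, and the remainder. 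This gives representations of the form
\[
w_\lambda(x)=\int_{\Sigma_\lambda}\bigl[G(x^\lambda,y)-G(x,y)\bigr]\,\bigl[g(y^\lambda,v(y^\lambda))-g(y,v(y))\bigr]\,\mathrm{d}y+R_\lambda(x),
\]
together with the analogous formula for $z_\lambda$ with $f$ and $u$. The main sign information comes from the Green function estimates of Boggio's kernel near the boundary used in \cite[Lemma~2.5]{Mancini}: $G(x^\lambda,y)-G(x,y)\ge 0$ for $x,y\in\Sigma_\lambda$, and the remainder terms have a favorable sign for $\lambda$ near $1$.

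\textbf{Coupled comparison.} Split the difference of nonlinearities as
\[
g(y^\lambda,v(y^\lambda))-g(y,v(y))=\bigl[g(y^\lambda,v(y^\lambda))-g(y^\lambda,v(y))\bigr]+\bigl[g(y^\lambda,v(y))-g(y,v(y))\bigr].
\]
By \eqref{f3}-(ii) for $g$, the second bracket is $\ge 0$, because $y^\lambda$ lies inward relative to $y$ along a direction within $\overline\delta$ of the outer normal. By \eqref{f3}-(i) for $g$, the first bracket has the sign of $z_\lambda$. Hence $z_\lambda\ge0$ on $\Sigma_\lambda$ forces $w_\lambda\ge0$, and symmetrically, using $f$, $w_\lambda\ge 0$ forces $z_\lambda\ge 0$.

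The start and continuation of the planes are then carried out for the pair together. Consider the set of $\lambda$ for which both $w_\lambda\ge0$ and $z_\lambda\ge0$ hold. For $\lambda$ near $1$ it is nonempty, since $\partial_\nu u,\partial_\nu v$ and the Hopf-type boundary behaviour of Green potentials with nonnegative data hold for both components. It is closed by continuity. It is open down to a fixed threshold $1-r$, by the cooperative implications above applied to the negative parts. Differentiating $w_\lambda$ at the plane then gives $\langle\nabla u,\theta\rangle\le0$ and $\langle\nabla v,\theta\rangle\le0$ in $\omega=\omega_r$.

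\textbf{Main obstacle.} I expect the hardest step to be the openness step in the coupled setting. The negative sets of $w_\lambda$ and $z_\lambda$ interact, so I would control them jointly, for instance through $\|w_\lambda^-\|_{L^\infty}+\|z_\lambda^-\|_{L^\infty}$. This quantity is bounded by a small kernel-dependent constant times itself, using the uniform Lipschitz bounds of $f,g$ on the range of the $C^{3,\gamma}$ solution and the smallness of $|\Sigma_\lambda^-|$. The key requirement is that $\delta$ and $\omega$ depend only on the kernel and on $\overline r,\overline\delta$, not on $(u,v)$.
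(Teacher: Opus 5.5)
Your overall architecture matches the paper's. You use the Green representation for each equation, the cross-implications from \eqref{f3}-(i),(ii), and a start/continuation argument run on both components at once. The cross-implications are exactly the paper's chain $g(y,v(y))\ge g(y,v(\overline y))\ge g(\overline y,v(\overline y))$ and its analogue for $f$.

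\textbf{The gap is in the continuation step, which you yourself flag as the main obstacle.} To bound $\|w_\lambda^-\|_{L^\infty}+\|z_\lambda^-\|_{L^\infty}$ by a small constant times itself, you need $|g(y^\lambda,v(y^\lambda))-g(y^\lambda,v(y))|\le L\,|z_\lambda(y)|$, and the same for $f$. That is, you need $f,g$ to be Lipschitz in $t$. This is not a hypothesis of the lemma:
\begin{itemize}
\item \eqref{f3} gives only continuity and monotonicity in $t$.
\item The Lipschitz condition \eqref{eq:Lipschitzt} enters the paper only as an extra assumption for $C^{4,\gamma}$ regularity.
\item $C^{3,\gamma}$ regularity of $(u,v)$ does not supply it.
\end{itemize}
With only a modulus of continuity $\omega$, you get $a\le c\,\omega(b)$ and $b\le c\,\omega(a)$ for $a=\|w_\lambda^-\|_\infty$, $b=\|z_\lambda^-\|_\infty$. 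This does not force $a=b=0$; take $\omega(s)=\sqrt s$.

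Even granting Lipschitz bounds, $L$ depends on $\|u\|_\infty$ and $\|v\|_\infty$. If your smallness comes from the size of the cap, the resulting neighborhood depends on the solution. That defeats the uniformity Lemma \ref{Linfinity bound near the boundary} needs. If instead the smallness comes from small negative sets near a limiting position, you first need strict positivity at that position, and you never derive it.

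\textbf{How the paper closes this step without any Lipschitz bound.} Following \cite[Lemma~2.5]{Mancini}:
\begin{itemize}
\item At the critical position $\overline\lambda$, the weak inequalities $u(y)\ge u(\overline y)$ and $v(y)\ge v(\overline y)$ hold by continuity.
\item \eqref{f3} converts them into ordered data for both equations.
\item The \emph{strict} Boggio-kernel inequalities of \cite{Berchio}, together with $g(\cdot,v),f(\cdot,u)\ge 0$ and not identically zero, then give strict inequalities: $u(x)>u(\overline x)$ and $v(x)>v(\overline x)$ in $S_{\overline\lambda}$, and $\partial_{e_1}u<0$, $\partial_{e_1}v<0$ on $T_{\overline\lambda}$.
\item A compactness argument moves the plane further. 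A limit of points where the inequality fails lies in the interior, on $T_{\overline\lambda}$, or on $\partial B$. Each case is excluded by the strict facts above or by the boundary behaviour of the Green potentials.
\end{itemize}
The only threshold is $\overline r/2$, which comes from \eqref{f3} alone.

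\textbf{A separate error in your kernel claim.} As written it is false: for $x,y\in\Sigma_\lambda$, $G(x^\lambda,y)-G(x,y)\to-\infty$ as $y\to x$, because of the logarithmic singularity. The correct splitting pairs $F(y^\lambda)-F(y)$ with the kernel $G(x^\lambda,y^\lambda)-G(x,y^\lambda)$, which is positive. The leftover term is $\bigl[G(x^\lambda,y^\lambda)+G(x^\lambda,y)-G(x,y)-G(x,y^\lambda)\bigr]F(y)$. Its sign has to come from the companion inequality in \cite{Berchio}.
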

\begin{proof} We adapt the proof of \cite[Lemma 2.5]{Mancini} to the system \eqref{system-problem}. Using the same notations of \cite[Lemma 2.5]{Mancini}, we introduce the sets
\[
\Gamma_u:=
\left\{
\lambda\in(0,1):
\begin{array}{l}
u(x)>u(\overline x)\quad\forall\,x\in S_\lambda,\\[1mm]
\partial_{e_1}u(x)<0\quad\forall\,x\in T_\lambda
\end{array}
\right\},\quad\Gamma_v:=
\left\{
\lambda\in(0,1):
\begin{array}{l}
v(x)>v(\overline x)\quad\forall\,x\in S_\lambda,\\[1mm]
\partial_{e_1}v(x)<0\quad\forall\,x\in T_\lambda,
\end{array}
\right\}.
\]
where $e_1:=(1,0,0,0) \in \partial B$ and
\begin{equation*}
    \Sigma_{\lambda} \coloneq \{x \in B:x_1>1-\lambda\}, \quad T_{\lambda}\coloneq\{x \in B: x_1=1-\lambda\}, \quad S_{\lambda}\coloneq B\setminus (\Sigma_{\lambda} \cup T_{\lambda}).
\end{equation*}
Moreover, we define $\Gamma\coloneqq \Gamma_u \cap \Gamma_v$ and
$
        \overline\lambda
        :=
        \sup\{\lambda\in(0,1):\ (0,\lambda)\subset
        \Gamma\}.
$ Considering the following problems
\[
\begin{cases}
\Delta^2 u=g(x,v) & \text{in }B,\\
u=\partial_\nu u=0 & \text{on }\partial B,
\end{cases}
\qquad
\begin{cases}
\Delta^2 v=f(x,u) & \text{in }B,\\
v=\partial_\nu v=0 & \text{on }\partial B,
\end{cases}
\]
independently, we can find $\varepsilon_u, \varepsilon_v>0$ such that
\[
        (0,\varepsilon_u)\subset\Gamma_u,
        \qquad
        (0,\varepsilon_v)\subset\Gamma_v.
\]
Thus, if we set  $\varepsilon \coloneqq \min\{\varepsilon_u,\varepsilon_v\}$, we have $(0,\varepsilon)\subset \Gamma,$ showing that $\overline{\lambda}$ is well defined. We claim that
\[
        \Gamma
        \supset
        \left(0,\frac{\overline r}{2}\right),
\]
where $\overline{r}$ is given in \eqref{f3}. Suppose by contradiction that
\[
        \overline\lambda<\frac{\overline r}{2}.
\]
By the definition of \(\overline\lambda\), proceeding as in \cite[Lemma 2.5]{Mancini}, we obtain simultaneously
\[
        u(y)\ge u(\overline y),
        \qquad
        v(y)\ge v(\overline y)
        \qquad
        \forall\,y\in S_{\overline\lambda}.
\]
Moreover, by the condition \eqref{f3} and applying the strategy of  \cite[Lemma 2.5]{Mancini} twice, we also have
\[
        g(y,v(y))
        \ge
        g(y,v(\overline y))
        \ge
        g(\overline y,v(\overline y))\quad \forall\, y \in S_{\overline{\lambda}} \cap \Sigma_{\overline{\lambda}'},
\]
and
\[
        f(y,u(y))
        \ge
        f(y,u(\overline y))
        \ge
        f(\overline y,u(\overline y))\quad \forall\, y \in S_{\overline{\lambda}} \cap \Sigma_{\overline{\lambda}'},
\]
where for $x=(x_1,x_2,x_3,x_4) \in \mathds{R}^{4}$, we define
\begin{equation*}
    \Sigma_{\overline{\lambda}'}\coloneq \{x \in B: \overline{x} \in \Sigma_{\overline{\lambda}}\}\quad \text{for}\quad \overline{x}\coloneq (2(1-\lambda)-x_1,x_2,x_3,x_4). 
\end{equation*}
Next, considering the problem
\[
\begin{cases}
\Delta^2 u=g(x,v) & \text{in }B,\\
u=\partial_\nu u=0 & \text{on }\partial B,
\end{cases}
\]
we obtain
\[
        u(x)>u(\overline x)
        \quad\forall\,x\in S_{\overline\lambda},
        \qquad
        \partial_{e_1}u(x)<0
        \quad\forall\,x\in T_{\overline\lambda}.
\]
While studying the problem 
\[
\begin{cases}
\Delta^2 v=f(x,u) & \text{in }B,\\
v=\partial_\nu v=0 & \text{on }\partial B,
\end{cases}
\]
we obtain
\[
        v(x)>v(\overline x)
        \quad\forall\,x\in S_{\overline\lambda},
        \qquad
        \partial_{e_1}v(x)<0
        \quad\forall\,x\in T_{\overline\lambda}.
\]

Finally, by the same compactness argument as
in  \cite[Lemma 2.5]{Mancini}, these strict inequalities allow the plane to be
moved slightly further. Thus, there exists \(\eta>0\) such that $\overline{\lambda} +\eta<\overline{r}/2$ and
\[
        (0,\overline\lambda+\eta)
        \subset
        \Gamma,
\]
which contradicts the definition of \(\overline\lambda\).
\end{proof}

The spectral condition
\eqref{f2} imposed on both nonlinearities allows to
control both nonlinear masses through the first eigenfunction. Indeed, adapting the proof  of \cite[Lemma 3.1]{criticalandsubcritical}, we obtain the following result.

\begin{lemma}\label{L1 uniform with phi1 system} Assume that either $f$ or $g$ satisfies $\eqref{f0}$ and that both $f$ and $g$ satisfy $\eqref{f2}$. Then there exists a constant $C>0$ such that
    \begin{equation}
        \int_{B} f(x,u)\varphi_1\,\mathrm{d}x+\int_{B}g(x,v)\varphi_1\,\mathrm{d}x\leq C,
    \end{equation}
for every very weak solution $(u,v)$ of \eqref{system-problem}.
\end{lemma}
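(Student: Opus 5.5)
The plan is to mimic the scalar argument of Lemma \ref{L1 uniform with phi1}, testing both equations against $\varphi_1$ and coupling them through the spectral condition \eqref{f2} applied to $f$ and $g$ separately.

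\textbf{Step 1 (regularity and the two identities).} By Theorem \ref{regularity system}, every very weak solution $(u,v)$ lies in $(C^{3,\gamma}(\overline B))^2$. In particular it is a weak solution in $H_0^2(B)$, so $\varphi_1$ is an admissible test function. Testing the two equations with $\varphi_1$ and using $\int_B \Delta w\,\Delta\varphi_1\,\mathrm{d}x=\lambda_1\int_B w\varphi_1\,\mathrm{d}x$ for $w\in H^2_0(B)$, we obtain
\[
\int_B g(x,v)\varphi_1\,\mathrm{d}x=\lambda_1\int_B u\varphi_1\,\mathrm{d}x,\qquad
\int_B f(x,u)\varphi_1\,\mathrm{d}x=\lambda_1\int_B v\varphi_1\,\mathrm{d}x .
\]

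\textbf{Step 2 (lower bounds from \eqref{f2}).} As in \eqref{f2 implication}, there exist $a>\lambda_1$, some $0<\varepsilon<a-\lambda_1$, and $c>0$ such that $f(x,t)\ge (a-\varepsilon)t-c$ and $g(x,t)\ge (a-\varepsilon)t-c$ for all $t\ge0$ and $x\in\overline B$. Set
\[
U=\int_B u\varphi_1\,\mathrm{d}x,\qquad V=\int_B v\varphi_1\,\mathrm{d}x,\qquad c_1=c\int_B\varphi_1\,\mathrm{d}x,\qquad b=a-\varepsilon>\lambda_1 .
\]
Step 1 then gives
\[
\lambda_1 U\ge bV-c_1,\qquad \lambda_1 V\ge bU-c_1 .
\]

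\textbf{Step 3 (closing the system).} Adding the two inequalities gives $(b-\lambda_1)(U+V)\le 2c_1$, so
\[
U+V\le \frac{2c_1}{b-\lambda_1}.
\]
Both $U$ and $V$ are nonnegative because $u,v>0$ and $\varphi_1>0$. Hence
\[
\int_B f(x,u)\varphi_1\,\mathrm{d}x+\int_B g(x,v)\varphi_1\,\mathrm{d}x=\lambda_1(U+V)\le \frac{2c_1\lambda_1}{b-\lambda_1},
\]
and this constant is independent of the solution.

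The only delicate point is the cross structure of the system: the equation for $u$ carries $g(x,v)$, so neither inequality closes on its own. Summing the two inequalities resolves this, because the mismatch between $U$ and $V$ cancels in the sum. The remaining care is to check that \eqref{f0} on just one of $f,g$ suffices to apply Theorem \ref{regularity system}, so that testing with $\varphi_1$ is legitimate; this follows from the bootstrap already described before that theorem. No obstacle beyond this is expected.
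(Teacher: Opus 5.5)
Your proposal is correct and is essentially the argument the paper intends. The paper gives no written proof: it adapts the scalar Lemma \ref{L1 uniform with phi1} following \cite[Lemma 3.1]{criticalandsubcritical}, namely testing both equations against $\varphi_1$, using the linear lower bounds from \eqref{f2} for $f$ and $g$, and adding the two cross inequalities, exactly as you do. Your remark on nonnegativity is not needed for the bound on the sum; it only serves if you also want separate bounds on the two integrals.
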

In view of \hyperref[moving 6 system]{Lemma \ref{moving 6 system}} and \hyperref[L1 uniform with phi1 system]{Lemma \ref{L1 uniform with phi1 system}}, we can easily adapt the proofs of \hyperref[Linfinity bound near the boundary]{Lemma \ref{Linfinity bound near the boundary}} and \hyperref[uniform L11]{Theorem \ref{uniform L11}} and obtain the following $L^1$ bound on the nonlinearities.
\begin{theorem}\label{uniform L11 system} Assume that $f$ or $g$ satisfies \eqref{f0}, and that both $f$ and $g$ satisfy $\eqref{f2}$, $\eqref{f3}$. Then there exists a constant $M>0$ such that
\begin{equation*}
    \int_{B} f(x,u)\,\mathrm{d}x+\int_{B}g(x,v)\,\mathrm{d}x\leq M,
\end{equation*}
for every very weak solution $(u,v)$ of \eqref{system-problem}.    
\end{theorem}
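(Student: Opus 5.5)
The plan is to follow the scalar argument in Subsection~\ref{subsec4.2}, running it on both components simultaneously. Only two earlier results are needed: the eigenfunction bound of Lemma~\ref{L1 uniform with phi1 system} and the boundary monotonicity of Lemma~\ref{moving 6 system}. By Theorem~\ref{regularity system}, every very weak solution $(u,v)$ lies in $(C^{3,\gamma}(\overline B)\cap L^\infty(B))^2$, so both lemmas apply. They give a neighborhood $\omega_r$ of $\partial B$ and $\delta>0$, independent of the solution, such that $u$ and $v$ are both nonincreasing along every unit direction $\theta$ with $|\theta-\nu(x)|<\delta$, $x\in\omega_r$. They also give a constant $C$ with
\[
\int_B f(x,u)\varphi_1\,\mathrm{d}x+\int_B g(x,v)\varphi_1\,\mathrm{d}x\le C .
\]

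\textbf{Step 1: uniform $L^\infty$ bound near $\partial B$ for both components.} Testing the equations with $\varphi_1$ gives $\lambda_1\int_B u\varphi_1\,\mathrm{d}x=\int_B g(x,v)\varphi_1\,\mathrm{d}x\le C$, and likewise $\lambda_1\int_B v\varphi_1\,\mathrm{d}x=\int_B f(x,u)\varphi_1\,\mathrm{d}x\le C$. The cone construction of \cite[Equation (8$'$)]{dFLN1982} uses only the geometry of the cone of admissible directions, not the particular function. So the same $\omega\subseteq\omega_r$ and the same family of sets $I_y\subseteq B\setminus\omega$ with $|I_y|\ge\sigma>0$ work for both $u$ and $v$: $u\ge u(y)$ and $v\ge v(y)$ on $I_y$. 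Exactly as in the proof of Lemma~\ref{Linfinity bound near the boundary},
\[
\lambda_1\Big(\min_{B\setminus\omega}\varphi_1\Big)\sigma\,u(y)\le C ,
\]
and the same inequality holds for $v(y)$. This yields $\|u\|_{L^\infty(\omega)}+\|v\|_{L^\infty(\omega)}\le C_1$ uniformly over all solutions.

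\textbf{Step 2: split the integrals.} Write $\int_B f(x,u)\,\mathrm{d}x=\int_\omega f(x,u)\,\mathrm{d}x+\int_{B\setminus\omega}f(x,u)\,\mathrm{d}x$, and similarly for $g$. On $\omega$, continuity of $f$ and $g$ on the compact set $\overline\omega\times[0,C_1]$ bounds each integrand by $\sup_{\overline\omega\times[0,C_1]}\max\{f,g\}$. On $B\setminus\omega$, which is compactly contained in $B$, we have $\varphi_1\ge\min_{B\setminus\omega}\varphi_1>0$. Hence
\[
\int_{B\setminus\omega}\big(f(x,u)+g(x,v)\big)\,\mathrm{d}x\le\frac{C}{\min_{B\setminus\omega}\varphi_1}.
\]
Adding the pieces gives $M$.

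The only delicate point is Step~1. One must check that the neighborhood $\omega$ and the measure lower bound $\sigma$ are common to both components, so that a single $\omega$ serves both. This follows because Lemma~\ref{moving 6 system} provides the same $\omega$ and $\delta$ for $u$ and $v$, and because the set $I_y$ is determined by $y$, $\omega$ and $\delta$ alone. The rest is routine.
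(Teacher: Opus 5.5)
Your proposal is correct and follows essentially the same route as the paper. The paper obtains Theorem~\ref{uniform L11 system} by combining Lemma~\ref{moving 6 system} with Lemma~\ref{L1 uniform with phi1 system}, and then adapting the proofs of Lemma~\ref{Linfinity bound near the boundary} and Theorem~\ref{uniform L11} to both components; your cross-coupled identities $\lambda_1\int_B u\varphi_1\,\mathrm{d}x=\int_B g(x,v)\varphi_1\,\mathrm{d}x$ and $\lambda_1\int_B v\varphi_1\,\mathrm{d}x=\int_B f(x,u)\varphi_1\,\mathrm{d}x$, the common sets $I_y$, and the split over $\omega$ and $B\setminus\omega$ spell out exactly that adaptation.
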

Once the \(L^1\) masses are uniformly controlled, the proofs of the
\(L^\infty\) estimates follow the same alternatives already treated in the
scalar problem \eqref{App}. Assuming the subcritical exponential regime \eqref{system subcritical growth condition} and \eqref{f2}-\eqref{f3} for both nonlinearities $f$ and $g$, the uniform control of $\lVert \Delta u\rVert_{L^{2}(B)}$ and $\lVert \Delta v\rVert_{L^{2}(B)}$ for every very weak solution $(u,v)$ of \eqref{system-problem} follows from \hyperref[uniform L11 system]{Theorem \ref{uniform L11 system}} and Adams' inequality, as in  the proof of \hyperref[Sobolev uniform estimate]{Theorem \ref{Sobolev uniform estimate}}. The main difference is that we must repeat the argument for each individual equation: 
\[
\begin{cases}
\Delta^2 u=g(x,v) & \text{in }B,\\
u=\partial_\nu u=0 & \text{on }\partial B,
\end{cases}
\qquad
\begin{cases}
\Delta^2 v=f(x,u) & \text{in }B,\\
v=\partial_\nu v=0 & \text{on }\partial B.
\end{cases}
\]
More precisely, by considering the first problem above, we obtain
\[
        \|u\|\le C(1+\|v\|^\eta),
        \qquad \text{ with }
        \eta=\frac{\beta}{2-\beta}\in(0,1) \text{ and } \beta \text{ as in \eqref{system subcritical growth condition}}.
\]
Similarly, by considering the second problem above, we obtain
\[
        \|v\|\le C(1+\|u\|^{\eta'}),
        \qquad \text{ with }
        \eta'=\frac{\alpha}{2-\alpha}\in(0,1) \text{ and } \alpha \text{ as in \eqref{system subcritical growth condition}}.
\]
Thus, iterating the two inequalities above, we have
\begin{equation}
    \lVert u\rVert\leq C(1+\lVert u\rVert^{\eta\eta'})
\quad \text{ and } \quad 
    \lVert v\rVert\leq C(1+\lVert v\rVert^{\eta\eta'}).
\end{equation}
Since $\alpha+\beta<2$, if and only if
\(
        \eta\eta'<1
\),
these two inequalities imply that, uniformly in $(u,v)$,
\[
        \|\Delta u\|_{L^2(B)}+\|\Delta v\|_{L^2(B)}\le C.
\]
Finally, adapting the proof of \hyperref[Lp uniform estimate]{Theorem \ref{Lp uniform estimate}} for both $f(x,u)$ and $g(x,v)$ yields the following uniform a priori bounds in the subcritical regime.

\begin{theorem}\label{a priori estimate2 final system} Suppose that \eqref{system subcritical growth condition} holds. In addition, suppose that both $f$ and $g$  satisfy \eqref{f2} and \eqref{f3}. Then there exists a constant $C>0$ such that  every very weak solution $(u,v)$ of \eqref{system-problem} satisfies
\begin{equation*}
    \lVert u\rVert_{L^{\infty}(B)}+\lVert v\rVert_{L^{\infty}(B)}\leq C.
\end{equation*}
\end{theorem}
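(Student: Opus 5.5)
The plan is to follow the scalar subcritical proof, with three steps: regularity, Sobolev bounds for both components, then a uniform $L^p$ bound on the nonlinearities fed into $L^p$ regularity.

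\emph{Regularity and mass bounds.} Since \eqref{system subcritical growth condition} implies \eqref{f0} for both $f$ and $g$, Theorem~\ref{regularity system} shows that every very weak solution $(u,v)$ is a weak solution in $(C^{3,\gamma}(\overline B))^2$. Theorem~\ref{uniform L11 system} then gives a constant $M$, independent of $(u,v)$, with $\int_B f(x,u)\,\mathrm{d}x+\int_B g(x,v)\,\mathrm{d}x\le M$.

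\emph{Uniform Sobolev bound.} Testing the first equation with $u$ gives $\|\Delta u\|_{L^2(B)}^2=\int_B u\,g(x,v)\,\mathrm{d}x$. I would split this exactly as in Theorem~\ref{Sobolev uniform estimate}, writing $g(x,v)/(1+u^\eta)$ times $(1+u^\eta)u$. The two resulting terms are handled by Lemmas~\ref{real number inequality1} and \ref{real number inequality2}. The exponential pieces $e^{(u/\|u\|)^2}$ and $e^{(u/\|u\|)^{a\eta}}$ are controlled by Adams' inequality, Theorem~\ref{Adams inequality}.

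The only change from the scalar case is the logarithmic piece. There we have $\log g(x,v)\le C+v^\beta$, so a power of $v$ appears rather than of $u$. This is the main obstacle: in the scalar proof $u^{\alpha/2}$ was absorbed by the weight $1+u^{\eta}$, and that absorption no longer works here.

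My plan is to put the $u$-normalisation and the $v$-logarithm on different variables in the Young-type inequality. Concretely, take $s=g(x,v)$, $t=u/\|u\|$, and use $\log^+ g(x,v)\lesssim 1+v^\beta$. Then $\int_B g\,(1+v^{\beta/2})\,\mathrm{d}x$ is estimated by splitting at $v\le \|v\|$: on that set it is at most $M(1+\|v\|^{\beta/2})$. On the complementary set I use Lemma~\ref{real number inequality1} once more together with Adams for $v/\|v\|$, re-arranged so that the outcome is $\|u\|\le C(1+\|v\|^{\eta})$ with $\eta=\beta/(2-\beta)$. This is the inequality announced in the text, and I would verify the exponent bookkeeping against conditions \eqref{condition1}--\eqref{condition3} with $\alpha$ replaced by $\beta$. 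Symmetrically, $\|v\|\le C(1+\|u\|^{\eta'})$ with $\eta'=\alpha/(2-\alpha)$.

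Substituting one inequality into the other gives $\|u\|\le C(1+\|u\|^{\eta\eta'})$. Since $\alpha,\beta<1$ we have $\eta\eta'<1$, so both $\|\Delta u\|_{L^2(B)}$ and $\|\Delta v\|_{L^2(B)}$ are uniformly bounded.

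\emph{From Sobolev to $L^\infty$.} Fix $p>1$. As in Theorem~\ref{Lp uniform estimate}, use $e^{pt^\alpha}\le C_{p,\mu}e^{\mu t^2}$ and choose $\mu$ small, so that Adams' inequality gives $\|f(x,u)\|_{L^p(B)}\le C$; the same argument gives $\|g(x,v)\|_{L^p(B)}\le C$. Applying the $L^p$ estimates \cite[Corollary 2.21]{Gazzola} to each Dirichlet problem with $p>1$ then gives $\|u\|_{L^\infty(B)}\le C\|g(x,v)\|_{L^p(B)}\le C$, and likewise for $v$, which proves the theorem.
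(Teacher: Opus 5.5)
Your overall architecture (regularity, the uniform masses from Theorem~\ref{uniform L11 system}, coupled Sobolev bounds, iteration, then $L^p$ regularity) is the paper's, and your last two steps are fine. The gap is in the one non-routine step. You need to bound $\int_{\{v>\|v\|\}} g(x,v)\,v^{\beta/2}\,\mathrm{d}x$ by a sublinear power of $\|v\|$, but this is only asserted (``re-arranged so that the outcome is\dots''), and as described it does not close. Apply Lemma~\ref{real number inequality1} with $s=g(x,v)$ and $t=(v/\|v\|)^{\beta/2}$. This gives $\|v\|^{\beta/2}\bigl(C+\int g(\log g)^{1/2}\bigr)$. Since $g(\log g)^{1/2}\lesssim g(1+v^{\beta/2})$, you get back the quantity you started from, now multiplied by $\|v\|^{\beta/2}$. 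Putting the full factor $v/\|v\|$ into $t$ instead only yields $\|u\|\le C(1+\|v\|)$, which is useless in the iteration. The missing idea is a weight in $v$, the variable inside $g$, with a correctly tuned power. Without it, the obstacle you identified (that $\log g\le C+v^{\beta}$ cannot be absorbed by $1+u^{\eta}$) is only displaced by your two-step scheme, not removed. Also, the bookkeeping \eqref{condition1}--\eqref{condition3} belongs to the Lemma~\ref{real number inequality2} argument; it does not govern a Lemma~\ref{real number inequality1}-only estimate.

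The paper's remedy is simply to move the weight. Write $\int_B u\,g(x,v)\,\mathrm{d}x=\int\frac{g(x,v)}{1+v^\eta}\,u+\int\frac{g(x,v)}{1+v^\eta}\,v^\eta u$. Apply Lemma~\ref{real number inequality1} with $s=g/(1+v^\eta)$, $t=u/\|u\|$ to the first term, and Lemma~\ref{real number inequality2} with $r=u/\|u\|$, $t=(v/\|v\|)^\eta$ to the second. Use Theorem~\ref{Adams inequality} for $u/\|u\|$ and for $v/\|v\|$ separately. The logarithms are then absorbed into $1+v^\eta$ under \eqref{condition1}--\eqref{condition3} with $\alpha$ replaced by $\beta$. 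This gives $\|u\|^2\le C(1+\|u\|+\|u\|\,\|v\|^\eta)$, i.e.\ $\|u\|\le C(1+\|v\|^\eta)$ with $\eta=\beta/(2-\beta)$.

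Your splitting can also be repaired, but only by building the weight in. On $\{v>\|v\|\}$ (for $\|v\|\ge1$), take $s=g\,\|v\|^{\beta}v^{-\beta/2}$ and $t=(v/\|v\|)^{\beta}$. Then $st=g\,v^{\beta/2}$, and $e^{t^2}$ is controlled by Adams. Moreover $\log s<\log g+\tfrac{\beta}{2}\log v\lesssim 1+v^{\beta}$, so $s(\log s)^{1/2}\lesssim g\,\|v\|^{\beta}$. Hence $\|u\|\le C(1+\|v\|^{\beta})$. This exponent is worse than $\beta/(2-\beta)$ but still suffices, since $\alpha\beta<1$.
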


In the critical exponential regime, specifically when  $f$ and $g$ satisfy \eqref{f0System} and \eqref{system inferior growth condition} and both $f$ and $g$ satisfy \eqref{f2}-\eqref{f3}, we proceed as in \hyperref[subsec a priori crit]{Section \ref{subsec a priori crit}} and argue by contradiction by considering a
sequence of positive solutions \((u_k,v_k)\) of \eqref{system-problem}
such that
\[
\|u_k\|_{L^\infty(B)}+\|v_k\|_{L^\infty(B)}\to+\infty .
\]
First, \hyperref[uniform L11 system]{Theorem \ref{uniform L11 system}} allows us, up to a
subsequence, to assume that
\[
f(x,u_k)\,\mathrm{d}x \rightharpoonup \mu,
\quad \text{ and } \quad
g(x,v_k)\,\mathrm{d}x \rightharpoonup \nu,
\]
weakly in the sense of measures. We then define the corresponding sets
of non-regular points \(\Sigma_\mu\) and \(\Sigma_\nu\), using the
Brezis-Merle threshold \(32\pi^2\), and the blow-up sets \(S_u\) and
\(S_v\) associated with the two components. We have
\[
S_u=S_v=\Sigma_\mu=\Sigma_\nu \: ;
\]
indeed, this can be achieved by combining the arguments in \cite[Section 5]{criticalandsubcritical} with the tools used in \hyperref[subsec a priori crit]{Section \ref{subsec a priori crit}}. More precisely, we apply the Brezis-Merle type inequality for the biharmonic operator instead of the Brezis-Merle inequality for the Laplacian, and the interior estimate found in \cite[Section 2.5.2]{Gazzola} in place of the mean-value property for harmonic functions.

Once this identification is obtained, the final exclusion of blow-up is
reduced to the scalar concentration argument described in \hyperref[a priori crit]{Theorem \ref{a priori crit}}; that is, the critical lower
exponential assumption \eqref{system inferior growth condition} is
combined with \hyperref[Green Below]{Lemma \ref{Green Below}} to rule out the existence of a
common blow-up point. Hence, $S_u$ and $S_v$ are empty, and the desired
uniform \(L^\infty\) bound follows for both components.

\begin{theorem}\label{a priori estimate1 final system} Suppose that \eqref{f0System} and \eqref{system inferior growth condition} hold. In addition, suppose that both $f$ and $g$  satisfy \eqref{f2} and \eqref{f3}. Then there exists a constant $C>0$ such that  every very weak solution $(u,v)$ of \eqref{system-problem} satisfies\[
\|u\|_{L^{\infty}(B)}+\|v\|_{L^{\infty}(B)}\le C.
\]
\end{theorem}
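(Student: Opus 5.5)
We argue by contradiction, following the scalar blow-up scheme of Section~\ref{subsec a priori crit} applied to each component. Suppose $(u_k,v_k)$ are very weak solutions of \eqref{system-problem} with $\|u_k\|_{L^\infty(B)}+\|v_k\|_{L^\infty(B)}\to+\infty$. Theorem~\ref{regularity system} makes them $C^{3,\gamma}(\overline B)$ weak solutions, and Theorem~\ref{uniform L11 system} gives $\int_B f(x,u_k)+\int_B g(x,v_k)\le M$. Up to a subsequence we then have $f(x,u_k)\rightharpoonup\mu$ and $g(x,v_k)\rightharpoonup\nu$ as in \eqref{measure convergence def}. By Lemma~\ref{solution} and Lemma~\ref{estimates}, exactly as in \eqref{L1 bound of solutions}, $u_k,v_k$ are bounded in $L^1(B)$. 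Define $\Sigma_\mu,\Sigma_\nu$ with threshold $32\pi^2$ as in Definition~\ref{regular point def}; both sets are finite by Remark~\ref{non-regular points finite}. Define $S_u,S_v$ as in Definition~\ref{blow up set}.

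\textbf{Local boundedness at regular points.} Let $x_0\notin\Sigma_\nu$. The argument of Lemma~\ref{a priori local} applied to $\Delta^2u_k=g(x,v_k)$ splits $u_k=u_{1,k}+u_{2,k}$ on $B(x_0,R)$. Corollary~\ref{B-M general ball} gives $\int e^{q|u_{1,k}|}\le C$ for some $q>1$, and interior estimates bound $u_{2,k}$ in $L^\infty(B(x_0,R/2))$. Hence $e^{u_k}$ is bounded in $L^q$ near $x_0$, and by \eqref{f0System} so is $f(x,u_k)\le ce^{u_k}$.

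This is where the system differs from the scalar case: the bound on $f(x,u_k)$ does not yet close the argument for $u_k$. It does, however, control $v_k$. Interior $L^q$ estimates for $\Delta^2 v_k=f(x,u_k)$, combined with the $L^1$ bound on $v_k$, give $v_k$ bounded near $x_0$. Then $g(x,v_k)$ is bounded near $x_0$, and another interior estimate bounds $u_k$ there.

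So $x_0\notin\Sigma_\nu$ implies that both $u_k$ and $v_k$ are bounded near $x_0$; symmetrically, the same holds for $x_0\notin\Sigma_\mu$. Therefore $S_u\cup S_v\subseteq\Sigma_\mu\cap\Sigma_\nu$. Conversely, if $u_k$ and $v_k$ are both bounded near $x_0$, then the densities are bounded there, the local masses are $\le CR^4<32\pi^2$ for small $R$, and so $x_0$ is regular for both measures, as in Lemma~\ref{relation of regular and blowup sets}. The isolation argument of that lemma then gives $S_u=S_v=\Sigma_\mu=\Sigma_\nu=:\Sigma$, a finite set.

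\textbf{Excluding blow-up.} If $\Sigma=\emptyset$, the local bounds together with a finite covering give uniform bounds on compact subsets of $B$. Near $\partial B$, the moving-plane Lemma~\ref{moving 6 system} combined with Lemma~\ref{L1 uniform with phi1 system}, as in Lemma~\ref{Linfinity bound near the boundary}, gives bounds on a neighborhood $\omega$ of $\partial B$. Together these contradict the assumed blow-up.

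If instead $x_0\in\Sigma$, we have $\mu(\{x_0\})\ge32\pi^2$. We repeat the proof of Theorem~\ref{a priori crit} for the $v$-equation. Set $z_k=(\Delta^2)^{-1}(\eta_R f(x,u_k))$, so $0\le z_k\le v_k$ by positivity of $G$. Truncation of $G$, weak convergence and Fatou's lemma give $\liminf\int_{B(x_0,R)}e^{z_k}\ge\int e^{z}$ with $z\ge 32\pi^2 G(\cdot,x_0)$. Lemma~\ref{Green Below} then yields $e^{z}\ge c|x-x_0|^{-4}$, which is not integrable. On the other hand \eqref{system inferior growth condition} gives $e^{z_k}\le e^{v_k}\le e^T+d^{-1}g(x,v_k)$, whose integral is bounded by Theorem~\ref{uniform L11 system}. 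This is a contradiction.

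The main obstacle is the cross-coupled local boundedness step: the Brezis--Merle inequality bounds a component via the \emph{other} component's mass. One must check that the $L^q$ integrability of $e^{u_k}$ feeds correctly into $v_k$ and back into $u_k$. The proof also needs the $L^1$ bounds on $u_k,v_k$ and the interior estimates of \cite[Section 2.5.2]{Gazzola}, applied on shrinking balls $R,R/2,R/4,R/8$.
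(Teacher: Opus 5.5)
Your proposal is correct and follows essentially the same route as the paper. You take weak limits $\mu,\nu$ of the two nonlinear masses, and identify blow-up points with non-regular points via the biharmonic Brezis--Merle inequality and interior estimates; your cross-coupled chain (small $g$-mass $\Rightarrow$ $e^{u_k}\in L^q$ $\Rightarrow$ $v_k$ bounded $\Rightarrow$ $u_k$ bounded) spells out what the paper only sketches. You then run the concentration argument of Theorem~\ref{a priori crit} on the $v$-equation using $0\le z_k\le v_k$, Lemma~\ref{Green Below} and $g(x,t)\ge de^{t}$, and you also make explicit the bound near $\partial B$ via Lemma~\ref{moving 6 system}, which the paper's sketch glosses over.
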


Finally, we establish the existence of solutions for problem \eqref{system-problem}, assuming \eqref{f5} with $\varepsilon_0>0$ and $\eta \in (0,6)$ for both nonlinearities $f$ and $g$, together with the hypotheses of either  \hyperref[a priori estimate2 final system]{Theorem \ref{a priori estimate2 final system}} or \hyperref[a priori estimate1 final system]{Theorem \ref{a priori estimate1 final system}}. Indeed, this can be done by applying \hyperref[topological method]{Theorem \ref{topological method}} as in \hyperref[existence]{Section \ref{existence}}, but this time in the product cone $C(\overline{B})\times C(\overline{B})$. The
operator $T$ associated with \eqref{system-problem} sends a pair \((u,v)\) to
the solution pair of the two linear Dirichlet biharmonic problems with
right-hand sides \(g(\cdot,v)\) and \(f(\cdot,u)\). Its compactness follows
directly from the compactness of each component of $T$. The smallness
assumption \eqref{f5} near the origin for \(f\) and \(g\) is used to verify condition \(1.\)
of \hyperref[topological method]{Theorem \ref{topological method}} in the product cone. The homotopy obtained by shifting both
components by the parameter \(t\) verifies condition \(2.(a)\). Condition \eqref{f2} on both $f$ and $g$ is used, as in the scalar case, to prove
condition \(2.(b)\), namely the absence of fixed points for \(t\) large. Lastly,
\hyperref[uniform L11 system]{Theorem \ref{uniform L11 system}} provides condition \(2.(c)\), since
one can choose \(R>0\) so large that no fixed point of the homotopy lies on the
sphere of radius \(R\). Therefore, \hyperref[topological method]{Theorem \ref{topological method}} applies and yields a positive
solution of the system, as stated in the following result.
\begin{theorem}\label{existence of sol system} Assume that the hypotheses of either \hyperref[a priori estimate2 final system]{Theorem \ref{a priori estimate2 final system}} or \hyperref[a priori estimate1 final system]{Theorem \ref{a priori estimate1 final system}}, and assume that both $f$ and $g$ satisfy  \eqref{f5} with $\varepsilon_0>0$ and $\eta \in (0,6)$. Then \eqref{system-problem} has at least one solution $(u,v) \in C^{3,\gamma}(\overline{B})\times C^{3,\gamma}(\overline{B})$ for some $0<\gamma<1$.
\end{theorem}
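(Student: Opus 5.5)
The plan is to apply Theorem~\ref{topological method} in the product cone
\[
\mathcal C \coloneqq \{(u,v)\in C(\overline B)\times C(\overline B): u\ge 0,\ v\ge 0\},
\]
with norm $\|(u,v)\|\coloneqq \max\{\|u\|_{C(\overline B)},\|v\|_{C(\overline B)}\}$. The operator is
\[
T(u,v)\coloneqq \bigl((\Delta^2)^{-1}g(\cdot,v),\,(\Delta^2)^{-1}f(\cdot,u)\bigr).
\]
Positivity of $G$ and of $f,g$ gives $T(\mathcal C)\subseteq\mathcal C$. Assumption \eqref{f5} for both $f$ and $g$ yields $f(x,0)=g(x,0)=0$, so $T(0)=0$. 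Compactness of $T$ follows componentwise, exactly as in the scalar case: the Green representation, the $W^{4,r}$ regularity used in Theorem~\ref{regularity1}, and Ascoli--Arzel\`a. Any fixed point is a very weak solution of \eqref{system-problem}, so Theorem~\ref{regularity system} will give the $C^{3,\gamma}$ regularity.

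\textbf{Condition 1.} Choose $r\in(0,\varepsilon_0)$ and suppose $(u,v)=tT(u,v)$ with $\|(u,v)\|=r$ and $t\in[0,1]$. By \eqref{InverseBiharmonic} and \eqref{f5},
\[
\|u\|_\infty\le \tfrac{\eta}{6}\|v\|_\infty,\qquad \|v\|_\infty\le\tfrac{\eta}{6}\|u\|_\infty,
\]
so $r\le\frac{\eta}{6}r<r$, which is a contradiction.

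\textbf{Condition 2.} Take the homotopy
\[
H((u,v),t)\coloneqq\bigl((\Delta^2)^{-1}g(\cdot,v+t),\,(\Delta^2)^{-1}f(\cdot,u+t)\bigr).
\]
Property (a) is clear. For (b), test both equations against $\varphi_1$ and use \eqref{f2 implication} for $f$ and $g$ with slope $\lambda_1+\varepsilon_1$. Setting $A=\int u\varphi_1$ and $B_0=\int v\varphi_1$, this gives
\[
\lambda_1 A\ge(\lambda_1+\varepsilon_1)(B_0+t\textstyle\int\varphi_1)-C_0\int\varphi_1,
\]
and the symmetric inequality with $A$ and $B_0$ exchanged. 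Summing the two yields
\[
0\ge \varepsilon_1(A+B_0)+2(\lambda_1+\varepsilon_1)t\textstyle\int\varphi_1-2C_0\int\varphi_1,
\]
hence $t\le C_0/(\lambda_1+\varepsilon_1)$, and we may take $t_0$ just above this value. For (c), it suffices to bound fixed points uniformly for $t\in[0,t_0]$. These are solutions of \eqref{system-problem} with $f_t(x,s)=f(x,s+t)$ and $g_t(x,s)=g(x,s+t)$. We then invoke Theorem~\ref{a priori estimate2 final system} or Theorem~\ref{a priori estimate1 final system} uniformly in $t$ and choose $R$ larger than the resulting bound $\mathcal C(t_0)$. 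Theorem~\ref{topological method} with excision then produces a fixed point with $r<\|(u,v)\|<R$.

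\textbf{Main obstacle.} I expect the hard step to be the uniformity in (c): checking that the shifted nonlinearities inherit \eqref{system subcritical growth condition} or \eqref{f0System}--\eqref{system inferior growth condition}, together with \eqref{f2}--\eqref{f3}, with constants independent of $t\in[0,t_0]$. The growth bounds are fine, since $e^{(s+t)^\alpha}\le e^{t_0^\alpha}e^{s^\alpha}$ (by subadditivity for $\alpha<1$) and $e^{s+t}\le e^{t_0}e^{s}$; \eqref{f3} and \eqref{f2} transfer directly. The constants in the a priori theorems depend only on these structural constants: they enter through Lemma~\ref{moving 6 system} via $\overline r,\overline\delta$, and through the $L^1$ bounds and Adams' inequality. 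Alternatively, if the critical blow-up argument is not quantitative, I would run it on a sequence $(u_n,v_n,t_n)$ with $t_n\to t_*$, which the contradiction argument handles verbatim.

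\textbf{Positivity.} Once the fixed point is found, suppose $(u,v)\ne 0$, say $u\not\equiv0$. Then $g(\cdot,v)\not\equiv0$, since the homogeneous Dirichlet biharmonic problem has only the trivial solution, so $v\not\equiv0$ as well because $g(\cdot,0)=0$. Positivity of $G$ then yields $u,v>0$ in $B$.
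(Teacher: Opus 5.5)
Your proposal is correct and follows the paper's route: Krasnosel'ski\u{\i}'s theorem in the product cone with $T(u,v)=\bigl((\Delta^2)^{-1}g(\cdot,v),(\Delta^2)^{-1}f(\cdot,u)\bigr)$, condition 1 from \eqref{f5} and \eqref{InverseBiharmonic}, the homotopy shifting both components by $t$, \eqref{f2} tested against $\varphi_1$ for 2.(b), and a priori bounds that are uniform in $t\in[0,t_0]$ for 2.(c). Your version is in fact more careful than the paper's sketch, notably in summing the two $\varphi_1$-inequalities in 2.(b) and in invoking the $L^\infty$ bounds of Theorems \ref{a priori estimate2 final system}--\ref{a priori estimate1 final system}; the paper cites only the $L^1$ bound of Theorem \ref{uniform L11 system} there, which by itself does not control the sup-norm.
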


\section{A nonlinearity lacking separation of variables at infinity}\label{Example}

We conclude the paper by providing an example of a nonlinearity that satisfies the hypotheses used in our a priori estimates and existence result, while failing to satisfy the asymptotic separation condition at infinity $(H_2)$ used in \cite{Mancini}. This shows that the class of nonlinearities treated here is not merely a reformulation of that framework. Let us recall that condition $(H_2)$ in \cite{Mancini} reads as follows: there exist functions $a:B\to(0,\infty)$ and $F:[0,\infty)\to(0,\infty)$ such that
\begin{equation} \label{Mancini&RomaniH2}
\lim_{t \to + \infty} \frac{f(x,t)}{a(x)F(t)}= 1, \qquad  \text{uniformly in }x \in B.
\end{equation}

Fix $r\in \bigl( 0, \frac 1 2 \bigr)$ and set $\omega_r:=\{x\in B:\mathrm{dist}(x,\partial B)<r\}$. Choose $\psi\in C^\infty(\overline{B})$ such that
$0\le\psi\le 1$ in $B$, $\psi\equiv 0$ in $\omega_r$, and $\psi\equiv 1$ in $B_{1-2r}$. For every $x \in \overline B$, we define
\[
f(x,t):=
\begin{cases}
\eta\,t, & 0\le t\le T,\\[3pt]
\eta A_T \,e^{t}\Big(1+\varepsilon\,\psi(x)\,\sin(\log(1+t))\Big), & t> T,
\end{cases}
\]
where $\eta \in (0,6)$, $\varepsilon \in (0,1)$,
$$T:=e^{k\pi}-1>0, \quad k \geq 1 \text{ is an integer,} \quad \text{  and } \quad  A_T:= \frac T{e^T}>0.$$
By construction $f$ satisfies \eqref{f5} with $0< \varepsilon_0 \leq T$ and $\eta \in (0,6)$. Moreover, since $\eta>0$, $A_T>0$, $\varepsilon>0$, and $0\le\psi\le 1$ in $B$, we have
\begin{equation} \label{eq:ExampleCrit}
\eta A_T e^t(1- \varepsilon) \leq f(x,t) \leq \eta A_T e^t(1 + \varepsilon) \quad \forall x \in B, \: t >T.
\end{equation}
In particular, recalling the definition of $f(x,t)$ for $0 \leq t \leq T$ and combining \eqref{eq:ExampleCrit} with the fact that $\varepsilon<1$, we deduce that $f(x,t)>0$ for every $x \in B$, $t>0$. Moreover, $\sin( \log(1+T))= \sin(k \pi)=0$ and $\eta A_T e^T= \eta T$, hence $f \in C(\overline B \times [0,+ \infty))$. 

Next, we show that $f$ satisfies \eqref{f1}, \eqref{f2}, and \eqref{f3}. For every $x \in B$ and $t \in [0,T]$, we have
$$f(x,t)= \eta t \leq \eta e^t.$$
By the above estimate and \eqref{eq:ExampleCrit}, we deduce that \eqref{f1} holds with $C:=\eta \max\{1, A_T(1+ \varepsilon)\}>0$ and $D:= \eta A_T(1- \varepsilon)>0$. Using the estimate from below in \eqref{eq:ExampleCrit} and the fact that $\eta>0$, $A_T>0$, and $\varepsilon<1$, we get
$$\frac{f(x,t)}{t} \geq \frac{\eta A_T e^t(1- \varepsilon)}{t} \longrightarrow + \infty , \qquad  \text{uniformly in }x \in B,$$
and \eqref{f2} is satisfied. Since $\psi\equiv 0$ in $\omega_r$, for every $x \in \omega_r$ we have
\[
f(x,t)=
\begin{cases}
\eta\,t, & 0\le t\le T,\\[3pt]
\eta A_T \,e^{t}, & t> T,
\end{cases}
\]
and the map $t \mapsto f(x,t)$ is increasing on $[0,+ \infty)$. In addition, for every $t \geq0$, the map $x \in \omega_r \mapsto f(x, t)$ is constant; hence, $f(\cdot, t) \in C^1(\omega_r)$, $\nabla_xf(x,t)=0$ for every $x \in \omega_r$, and \eqref{f3} is satisfied.

Finally, we show that $f$ fails to satisfy the uniform asymptotic separation of variables at infinity \eqref{Mancini&RomaniH2}. Assume by contradiction that there exist functions $a:B\to(0,\infty)$ and $F:[0,\infty)\to(0,\infty)$ such that $f$ satisfies \eqref{Mancini&RomaniH2}. Since $\psi\equiv 0$ in $\omega_r$ and $\psi\equiv 1$ in $B_{1-2r}$, there exist $x_0\in\omega_r$ with $\psi(x_0)=0$ and $x_1\in B_{1-2r}$ with $\psi(x_1)=1$. Therefore, for every $t >T$, we have
\begin{equation}\label{eq:ratio}
\frac{f(x_1,t)}{f(x_0,t)}=1+\varepsilon\sin(\log(1+t)) \quad \text{ so that } \quad \nexists \lim_{t \to + \infty} \frac{f(x_1,t)}{f(x_0,t)}.
\end{equation}
On the other hand, \eqref{Mancini&RomaniH2} implies pointwise convergence for each fixed $x \in B$. In particular,
\[
\lim_{t \to + \infty }\frac{f(x_1,t)}{a(x_1)F(t)}= 1
\qquad\text{and}\qquad
\lim_{t \to + \infty}\frac{f(x_0,t)}{a(x_0)F(t)}= 1
,
\]
and hence
\[
\lim_{t \to + \infty}\frac{f(x_1,t)}{f(x_0,t)}
=\lim_{t \to + \infty}
\frac{a(x_1)}{a(x_0)}
\cdot
\frac{\displaystyle \frac{f(x_1,t)}{a(x_1)F(t)}}{\displaystyle \frac{f(x_0,t)}{a(x_0)F(t)}}
=
\frac{a(x_1)}{a(x_0)},
\]
which contradicts \eqref{eq:ratio}.

The above example covers the critical growth case \eqref{f1}. Similarly, we can also construct an example in the subcritical growth case \eqref{f1'}. Choose
\(\psi\in C^{1}(\overline B)\) such that \(0\leq \psi\leq 1\) in $B$,
\(\psi\not\equiv 0\), and \(\psi\equiv 0\) in a neighborhood of \(\partial B\). We define
\[
f(x,t)
=
A t^{p} e^{t^{\sigma}}
\left(1+\varepsilon \psi(x)\sin(\log(1+t))\right),
\qquad \forall x\in B,\ t\geq 0,
\]
where $A>0$, \(0<\sigma<1\), \(p>1\), and \(0<\varepsilon<1\).
Then \(f\) satisfies the subcritical growth condition \eqref{f1'} with $\sigma < \alpha < 1$, as well as the
structural assumptions \eqref{f2}, \eqref{f3}, and \eqref{f5}. However, as in the
previous example, the oscillating factor prevents \(f\) from satisfying the asymptotic
separation of variables at infinity \eqref{Mancini&RomaniH2}.

\vspace{1cm}



\subsection*{Statements and Declarations}

\noindent\textbf{Funding.}
This study was financed in part by the Coordenação de Aperfeiçoamento de Pessoal de Nível Superior - Brasil (CAPES) - Finance Code 001, and by the MATH-AmSud grant 88887.878894/2023-00. 
J. M. do \'O acknowledges partial support from CNPq
through grants 312340/2021-4, 409764/2023-0, and 443594/2023-6 and from the
Para\'iba State Research Foundation (FAPESQ), grant no. 3034/2021. 
F. Sani was partially supported by INdAM-GNAMPA.
V. R. Silva acknowledges partial support from CAPES through Grants 88887.824231/2023-00 and 88881.980555/2024-01.

\medskip

\noindent\textbf{Conflict of interest.}
The authors declare that they have no conflict of interest.

\medskip

\noindent\textbf{Authors' contributions.}
All authors contributed to the conception of the work, the development of the
mathematical arguments, and the writing and revision of the manuscript. All
authors read and approved the final manuscript.

\medskip

\noindent\textbf{Data availability.}
Data sharing does not apply to this article, as no datasets were generated or
analyzed during the current study.

\bigskip


\bibliographystyle{abbrv}
\bibliography{References}

\end{document}